\numberwithin{equation}{section}
\newtheorem*{theorem*}{Theorem}
\newtheorem{thm}{Theorem}[section]
\newtheorem{cor}[thm]{Corollary}
\newtheorem{lem}[thm]{Lemma}
\newtheorem{prop}[thm]{Proposition}
\theoremstyle{definition}
\newtheorem{definition}[thm]{Definition}
\theoremstyle{remark}
\newtheorem{rem}[thm]{Remark}
\newcommand{\ddbar}{\sqrt{-1}\,\bar{\partial}\partial}
\newcommand{\ai}{\sqrt{-1}} 
\newcommand{\isom}{\stackrel{\sim}{\longrightarrow}} 
\newcommand{\Vol}{\mathrm{Vol}}
\newcommand{\rk}{\mathrm{rk}}
\newcommand{\tr}{\mathrm{tr}}
\newcommand{\Id}{\mathrm{Id}}
\newcommand{\surj}{\to\kern-1.8ex\to}
\DeclareMathOperator{\SL}{SL}
\title{Quot-scheme limit of Fubini--Study metrics and Donaldson's functional for vector bundles}
\author{Yoshinori Hashimoto}
\email{hashimoto@math.titech.ac.jp}
\address{Department of Mathematics, Tokyo Institute of Technology, 2-12-1 Ookayama, Meguro-ku, Tokyo, 152-8551, Japan}
\author{Julien Keller}
\email{keller.julien.3@uqam.ca}
\address{D\'epartement de Math\'ematiques, Universit\'e du Qu\'ebec \`a Montr\'eal, C.P. 8888, Succ. Centre-Ville, Montr\'eal H3C 3P8 Canada}
\begin{document}


\removeabove{0.7cm}
\removebetween{0.6cm}
\removebelow{0.8cm}

\maketitle

\begin{prelims}

\DisplayAbstractInEnglish

\bigskip

\DisplayKeyWords

\medskip

\DisplayMSCclass

\bigskip

\languagesection{Fran\c{c}ais}

\bigskip

\DisplayTitleInFrench

\medskip

\DisplayAbstractInFrench

\end{prelims}


\newpage

\setcounter{tocdepth}{1}

\tableofcontents


\section{Introduction}
\subsubsection*{Overview} Let $\mathcal{E}$ be a holomorphic vector bundle over a polarised smooth complex projective variety $(X , \mathcal{O}_X (1))$. A fundamental result connecting differential and algebraic geometry states that $\mathcal{E}$ admits Hermitian--Einstein metrics if and only if it is slope polystable (this is also called the Donaldson--Uhlenbeck--Yau theorem or the Kobayashi--Hitchin correspondence in the literature).

One direction of this correspondence was proved by Kobayashi \cite{Kobookjp} and L\"ubke \cite{Luebke}, and the other, arguably harder, direction was proved by Donaldson \cite{Do-1983,Don1985,Don1987}; it was also extended to compact K\"ahler manifolds by Uhlenbeck and Yau \cite{U-Y, U-Y2}. Fundamental as they are, their argument connecting the differential geometry of Hermitian--Einstein metrics and the algebraic geometry of slope  stability is rather technical, as we shall briefly review later.

\medskip
The aim of this paper is to provide the first step towards establishing a more direct link between these two concepts, from the \textit{variational} point of view of a functional defined by Donaldson \cite{Don1985}, whose critical point, if it exists, is the Hermitian--Einstein metric. More precisely, we study the asymptotic behaviour of Donaldson's functional $\mathcal{M}^{\mathrm{Don}}$ on the set of Fubini--Study metrics $\mathcal{H}_k$, which is dense in the space of all Hermitian metrics on $\mathcal{E}$, and prove that it is controlled by the algebro-geometric slope of certain filtrations of $\mathcal{E}$ by subsheaves (see Definition \ref{defmdonna}). While the precise statement will be given in Theorems \ref{thmlnsdf} and \ref{uppbound} in Section \ref{mainsect}, one of our main conclusions can be summarised in the following form.
\begin{theorem*} \label{sumthm}
Suppose that we take $k \in \mathbb{N}$ so that $\mathcal{E}(k) := \mathcal{E} \otimes \mathcal{O}_X(k)$ is globally generated. Let $\{ h_{\sigma_{t}} \}_{t \ge 0}$ be a family of Fubini--Study metrics on $\mathcal{E}$, emanating from a reference metric $h_{\mathrm{ref}}$, defined by a 1-parameter subgroup $\sigma : \mathbb{C}^* \to \SL (H^0 (X, \mathcal{E}(k))^{\vee})$ in the category of algebraic groups. Then there exists an integrally graded filtration $\{ \mathcal{E}_{\le q} \}_{q \in \mathbb{Z}}$ of $\mathcal{E}$ by saturated subsheaves such that
\begin{equation} \label{sumthmeq}
		\lim_{t \to + \infty} \frac{\mathcal{M}^{\mathrm{Don}} (h_{\sigma_t}, h_{\mathrm{ref}})}{t} =  2 \sum_{q \in \mathbb{Z}} \mathrm{rk} (\mathcal{E}_{\le q}) \left(  \mu(\mathcal{E}) - \mu (\mathcal{E}_{\le q}) \right), \tag{A}
\end{equation}
where $\mu (\mathcal{E})$ (resp.~$\mu (\mathcal{E}_{\le q})$) is the slope of $\mathcal{E}$ (resp.~$\mathcal{E}_{\le q}$).
\end{theorem*}

The actual results that we prove in Theorems \ref{thmlnsdf} and \ref{uppbound} are stronger in various ways; in particular, we can relax the condition on the 1-parameter subgroup $\sigma$ in the above, so that the collection of families of Fubini--Study metrics $\{ h_{\sigma_{t}} \}_{t \ge 0}$ as above exhausts a dense subset of the space of Hermitian metrics on $\mathcal{E}$; see (\ref{defbergsp2}).

In particular, we prove that the slope stability of $\mathcal{E}$ implies that Donaldson's functional is coercive on the set of Fubini-Study metrics (Corollary \ref{prop1}). Moreover, we do so by making practically no use of the PDE theory.

An application of the above results provided in this paper is a new proof of the theorem of Kobayashi \cite{Kobookjp} and L\"ubke \cite{Luebke}, which states that the existence of Hermitian--Einstein metrics implies the slope stability of $\mathcal{E}$ (Section \ref{cantostab}). The converse, probably more interesting, direction of the correspondence is treated in the sequel to this paper \cite{HK2}. More specifically, in \cite{HK2} we provide an alternative proof of the Donaldson--Uhlenbeck--Yau theorem, which asserts the existence of Hermitian--Einstein metrics on slope stable vector bundles, by assuming that a ``uniform'' version of the inequality in Theorem \ref{thmlnsdf} holds; the novelty of this alternative proof is that it relies much less on analysis and PDE theory by using instead the methods established in this paper (see below and \cite{HK2} for more details).

\medskip
The key new definitions in achieving the results of the current paper are the {\it renormalised Quot-scheme limit of Fubini--Study metrics} (Section \ref{Qlimit}) and the {\it non-Archimedean Donaldson functional} (Section \ref{RMstab}), which relate a certain limit of Fubini--Study metrics to the flat limit in the Quot-scheme, thereby providing an algebro-geometric description of it. We believe that they are interesting in their own right and have a range of potential applications, \emph{e.g.}~the metric study of Gieseker stability notion for Higgs bundles initiated in \cite{GF-K-R}.

\medskip
\subsubsection*{Comparison to other works}  In order to clarify the novelty of our method, we compare our work to the well-established results concerning the correspondence between Hermitian--Einstein metrics and slope stability.

\medskip
The crux of the theorem of Kobayashi \cite{Kobookjp} and L\"ubke \cite{Luebke} is an application of the Gauss--Codazzi formula, together with a sheaf-theoretic argument which is rather technical (see also \cite[Section 5.8]{Kobook}). While our alternative proof (in Section \ref{cantostab}) is perhaps no simpler in terms of technicality, we believe that it is more \textit{geometric}, as the proof reduces to studying the asymptotic behaviour of a convex energy functional.

\medskip
The other direction of the correspondence, established by Donaldson \cite{Do-1983,Don1985,Don1987} and Uhlenbeck--Yau \cite{U-Y, U-Y2}, rests on a sophisticated application of the nonlinear PDE theory. Both of their works can be described as a ``continuous deformation'' from an initial metric to the Hermitian--Einstein metric. In both cases, it aims to obtain the Hermitian--Einstein metric at the limit of such continuous deformations, where the deformation process and the limiting behaviour are analysed by means of the nonlinear PDE theory. When the bundle $\mathcal{E}$ is slope unstable, both methods prove the existence of a destabilising subsheaf. Donaldson uses the Yang--Mills flow and argues by applying the Mehta--Ramanathan theorem and the induction on the dimension of the variety (see also the recent complementary results on the Yang--Mills flow \cite{DasWent04,Jacob15,Jacob16,Sibley,SibWent} and references therein). The continuity method of Uhlenbeck--Yau relies on a certain compactness result for treating the limit, but it is well known that the argument is highly technical (see also \cite{Popo1}).

Our work, on the other hand, relies much less on analysis. Indeed, in this paper and the sequel \cite{HK2}, the analysis that we use is mostly elementary, except for that the asymptotic expansion of the Bergman kernel (the so-called Tian-Yau-Zelditch expansion), or its well-known consequence that the set of Fubini--Study metrics is dense in the space of Hermitian metrics (\ref{defbergsp2}), will play a crucially important role in \cite{HK2}; it also implies that our results in this paper cover a dense subset of the space of Hermitian metrics on $\mathcal{E}$. It seems interesting to point out that we  establish a more direct relationship between slope stability and Donaldson's functional by \textit{not} relying much on analysis, especially the PDE techniques.

\medskip
While our work has an obvious relationship to many classical results on holomorphic vector bundles, it is important to note that it is also inspired by the recent advances on the Yau--Tian--Donaldson conjecture, relating the existence of constant scalar curvature K\"ahler (cscK) metrics to $K$-stability. More precisely, our work can be regarded as a vector-bundle analogue of the well-known results due to Paul \cite{Paul}, Paul--Tian \cite{PT1,PT2} and Phong--Ross--Sturm \cite{PRS} on the asymptotic behaviour of the Mabuchi energy on the set of Fubini--Study metrics, leading to the work of Boucksom--Hisamoto--Jonsson \cite{BHJ1,BHJ2}. Recall that Boucksom--Hisamoto--Jonsson \cite{BHJ1,BHJ2} identified the asymptotic slope of various energy functionals as a ``non-Archimedean'' functional that is defined in terms of algebro-geometric intersection formulae (see also \emph{e.g.}~\cite{Boucksom-icm}). Their method was applied to the problem of K\"ahler--Einstein metrics by Berman--Boucksom--Jonsson \cite{BBJ}. While their influence on our work should be clear, it is perhaps worth mentioning that we can draw parallels between many results and concepts in cscK metrics (varieties) and Hermitian--Einstein metrics (vector bundles), which is explained in more details in the sequel \cite{HK3}.

Likewise, our proof of Hermitian--Einstein metrics implying slope stability is similar to many established results in cscK metrics, such as Berman--Darvas--Lu \cite{BDLu}, Dervan--Ross \cite{DerRoss}, and Sj\"ostr\"om Dyrefelt \cite{Sj}; the existence of canonical metrics implying stability is proved by evaluating the asymptotic behaviour of appropriate energy functionals (Mabuchi energy for cscK metrics, Donaldson's functional for Hermitian--Einstein metrics). Indeed, if the vector bundle $\mathcal{E}$ is unstable and destabilised by a subsheaf $\mathcal{F} \subset \mathcal{E}$, we prove that we have a 1-parameter subgroup $\sigma_{\mathcal{F}} : \mathbb{C}^* \to \SL(H^0 (\mathcal{E}(k))^{\vee})$ (see Definition \ref{twostepfiltdef}) along which the Donaldson functional is unbounded below, by showing that the right hand side of the equation (\ref{sumthmeq}) for $\sigma_{\mathcal{F}}$ is a multiple of $\mu (\mathcal{E}) - \mu (\mathcal{F})$ and hence is negative (see Proposition \ref{prop2} and Section \ref{cantostab}).

Finally, we note that a specific filtration of the vector bundle $\mathcal{E}$, called the Harder--Narasimhan filtration, played an important role in the study of Hermitian--Einstein metrics since the seminal work of Atiyah--Bott \cite{A-B}. A novel point of our work could be that we consider \textit{arbitrary} filtrations associated to the Quot-schemes, which can be compared to \cite[Section 2]{DonCalabi}.

\medskip
\subsubsection*{Organisation of the paper} In Section \ref{background}, we introduce some background materials on Quot-schemes, Fubini--Study metrics, and the Donaldson functional. In Section \ref{Qlimit}, we define the renormalised Quot-scheme limit of Fubini--Study metrics, which provides us with a link between differential and algebraic geometry. This machinery is applied in Section \ref{unifestimsect}, in which we evaluate the asymptotic behaviour of the Donaldson functional; in particular, we prove that it can be controlled by the algebro-geometric data.  In Section \ref{RMstab}, we define the non-Archimedean Donaldson functional in terms of filtrations of $\mathcal{E}$ by subsheaves. In Section \ref{2stp}, we look at a particular class of filtrations, which we call the two-step filtrations, defined by a saturated subsheaf of $\mathcal{E}$. In Section \ref{mainsect}, we summarise the main results of the paper (Theorems \ref{thmlnsdf}, \ref{uppbound}, and Corollary \ref{prop1}). In Section \ref{cantostab}, we apply them to provide an alternative proof of the theorem of Kobayashi and L\"ubke.

\subsection*{Notation}
We shall consistently write $(X,L)$ for a polarised smooth projective variety over $\mathbb{C}$ of complex dimension $n$, where we further assume that $L$ is very ample. We work with a fixed K\"ahler metric $\omega \in c_1 (L)$ on $X$ defined by a Hermitian metric $h_L$ on $L$. We often write $\mathcal{O}_X (k)$ for $L^{\otimes k}$.

We write $\mathcal{O}_X$ for the sheaf of rings of holomorphic functions on $X$, $C^{\infty}_X$ for the one of $\mathbb{C}$-valued $C^{\infty}$-functions on $X$.

Throughout in this paper, coherent sheaves of $\mathcal{O}_X$-modules will be denoted by calligraphic letters (\emph{e.g.}~$\mathcal{E}$). Block letters (\emph{e.g.}~$E$) will be reserved for complex $C^{\infty}$-vector bundles. This leads to an unavoidable clash of notation for a holomorphic vector bundle, but we use the following convention: both $\mathcal{E}$ and $E$ will be used to denote a holomorphic vector bundle, but $E$ will be treated as a $C^{\infty}$-object whereas $\mathcal{E}$ is meant to be a sheaf. A $C^{\infty}$-vector bundles is clearly a locally free sheaf of $C^{\infty}_X$-modules, but when we say ``locally free'', it is always understood that it is locally free as a sheaf of $\mathcal{O}_X$-modules.

The symbol $\vee$ is used for the dual; for example, for a vector space $V$, we will write $V^{\vee} = \mathrm{Hom}_{\mathbb{C}} (V , \mathbb{C})$ and for a coherent sheaf $\mathcal{F}$ of $\mathcal{O}_X$-modules, we write $\mathcal{F}^{\vee} = \mathcal{H}om_{\mathcal{O}_X} (\mathcal{F} , \mathcal{O}_X)$. Also,  for a $C^{\infty}$-vector bundle $F$, we write $F^{\vee}:= \mathcal{H}om_{C^{\infty}_X} (F , C^{\infty}_X)$. The symbol $*$ is reserved for the metric dual, with respect to some metric which will be specified in the text.

Given a coherent sheaf $\mathcal{F}$ of $\mathcal{O}_X$-modules, $H^0(X, \mathcal{F}) = H^0 (\mathcal{F})$ denotes the set of global sections of $\mathcal{F}$; for example, we shall often write $H^0 (\mathcal{F}(k))$ for $H^0(X , \mathcal{F} \otimes L^{\otimes k})$ for any coherent sheaf $\mathcal{F}$ on $X$. We write $\Gamma_{C^{\infty}_X}(X, F) = \Gamma_{C^{\infty}_X} (F)$ for the one for the $C^{\infty}$-vector bundle $F$. Following the standard notation, we also write $\mathcal{E} nd_{\mathcal{O}_X} (\mathcal{F}) := \mathcal{F}^{\vee} \otimes_{\mathcal{O}_X} \mathcal{F}$ and $\mathrm{End}_{\mathcal{O}_X} (\mathcal{F}) := H^0 ( X, \mathcal{E} nd_{\mathcal{O}_X} (\mathcal{F}) )$. When $\mathcal{F}$ is locally free, by regarding it as a $C^{\infty}$-complex vector bundle, we write $\mathrm{End}_{C^{\infty}_X} (F)$ for the set of $C^{\infty}$-sections of the vector bundle $F^{\vee} \otimes F$.

\subsection*{Acknowledgments}

Both authors would like to thank Ruadha\'i Dervan for helpful conversations, and the anonymous referees for many helpful comments. The first named author also thanks Yanbo Fang, Nicholas Lindsay, and Benjamin Sibley for helpful discussions.

\section{Background}\label{background}

A large part of the materials presented in this section can be found in the references \cite{H-L,Kobook,Siu87,L-T1,M-M,P-S03,Thomas,Wang1}.

\subsection{Slope stability}
Let $(X,L)$ be a polarised smooth projective variety of complex dimension $n$, and $\mathcal{E}$ be a holomorphic vector bundle over $X$ of rank $r$.

\begin{definition} \label{defrkdeg}
	Let $\mathcal{F}$ be a coherent sheaf on $X$. Its \textbf{slope} $\mu (\mathcal{F}) = \mu_L (\mathcal{F}) \in \mathbb{Q}$ is defined as
	\begin{equation*}
		\mu (\mathcal{F}) := \frac{\deg (\mathcal{F})}{\mathrm{rk} (\mathcal{F})},
	\end{equation*}
	where $\mathrm{rk} (\mathcal{F}) \in \mathbb{N}$ is the rank of $\mathcal{F}$ (assumed to be non-zero) where it is locally free (\emph{cf.}~\cite[p.11]{H-L}), and $\deg (\mathcal{F}) \in \mathbb{Z}$ is defined as $\int_X c_1 (\det \mathcal{F}) c_1(L)^{n-1}/(n-1)!$ (where $\det \mathcal{F}$ is a line bundle defined in terms of a locally free resolution of $\mathcal{F}$, see \cite{detdiv}, \cite[V.6]{Kobook}  for details).
\end{definition}

In general we should define the rank and degree in terms of the coefficients of the Hilbert polynomial \cite[Definitions 1.2.2 and 1.2.11]{H-L} and hence they are a priori rational numbers. For us, however, they are defined as integers and the above definition suffices since $X$ is smooth.

The following stability notion was first introduced by Mumford for Riemann surfaces, and was generalised to higher dimensional varieties by Takemoto by choosing a polarisation $L$.

\begin{definition}[Slope stability] \label{defmtstab}
	A holomorphic vector bundle $\mathcal{E}$ is said to be {\bf slope stable} (or Mumford--Takemoto stable) if for any coherent subsheaf $\mathcal{F} \subset \mathcal{E}$ with $0 < \mathrm{rk} (\mathcal{F}) < \rk (\mathcal{E})$ we have $\mu(\mathcal{E}) > \mu(\mathcal{F})$. $\mathcal{E}$ is said to be {\bf slope semistable} if the same condition holds with non-strict inequality, and {\bf slope polystable} if it is a direct sum of slope stable bundles with the same slope.
\end{definition}


\begin{definition}
A holomorphic vector bundle $\mathcal{E}$ is said to be
\begin{enumerate}
	\item \textbf{reducible} if it can be written as a direct sum of two or more nontrivial holomorphic subbundles as $\mathcal{E} = \bigoplus_j \mathcal{E}_j$;
	\item \textbf{irreducible} if it is not reducible;
	\item \textbf{simple} if $\mathrm{End}_{\mathcal{O}_X}(\mathcal{E}) = \mathbb{C}$.
\end{enumerate}
\end{definition}
It follows immediately that simple vector bundles are irreducible. The following fact is well known.
\begin{lem} \textup{(\emph{cf.}~\cite[Corollary~V.7.14]{Kobook})} \label{stablesimple}
	If $\mathcal{E}$ is slope stable, then it is simple.
\end{lem}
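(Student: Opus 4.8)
The plan is to show that every global holomorphic endomorphism $\phi \in \End_{\mathcal{O}_X}(\mathcal{E})$ must be a constant multiple of the identity, so that $\End_{\mathcal{O}_X}(\mathcal{E}) = \mathbb{C}\cdot\Id$. First I would observe that the coefficients of the characteristic polynomial $\det(t\,\Id - \phi)$ are global holomorphic functions on $X$ (they are built from the $\tr(\wedge^i \phi)$), and since $X$ is a compact connected complex manifold these functions are constant. Hence the eigenvalues of $\phi$ are constant over $X$; choose one of them, say $\lambda \in \mathbb{C}$. Then $\lambda$ is an eigenvalue of $\phi$ at \emph{every} point, so the endomorphism $\psi := \phi - \lambda\,\Id \in \End_{\mathcal{O}_X}(\mathcal{E})$ satisfies $\det \psi \equiv 0$, i.e.\ $\psi$ is degenerate at every point of $X$.

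Now suppose, for contradiction, that $\phi \neq \lambda\,\Id$, so that $\psi \neq 0$. I would then consider the two coherent subsheaves $\mathcal{K} := \ker \psi \subset \mathcal{E}$ and $\mathcal{I} := \im \psi \subset \mathcal{E}$. Because $\psi$ is everywhere degenerate we have $\rk(\mathcal{K}) \geq 1$, and because $\psi \neq 0$ we have $\rk(\mathcal{I}) \geq 1$; since $\rk(\mathcal{K}) + \rk(\mathcal{I}) = r$, both sheaves have rank strictly between $0$ and $r$. The first isomorphism theorem for sheaf morphisms gives $\mathcal{E}/\mathcal{K} \cong \mathcal{I}$, and additivity of rank and degree in the exact sequence $0 \to \mathcal{K} \to \mathcal{E} \to \mathcal{I} \to 0$ yields $\rk(\mathcal{E}) = \rk(\mathcal{K}) + \rk(\mathcal{I})$ and $\deg(\mathcal{E}) = \deg(\mathcal{K}) + \deg(\mathcal{I})$. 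Applying slope stability (Definition \ref{defmtstab}) to both $\mathcal{K}$ and $\mathcal{I}$ gives $\mu(\mathcal{K}) < \mu(\mathcal{E})$ and $\mu(\mathcal{I}) < \mu(\mathcal{E})$. Writing $r_1 = \rk(\mathcal{K})$, $r_2 = \rk(\mathcal{I})$, $d_1 = \deg(\mathcal{K})$, $d_2 = \deg(\mathcal{I})$, these two inequalities rearrange (using $r_1+r_2 = r$ and $d_1+d_2 = \deg\mathcal{E}$) to $d_1 r_2 < d_2 r_1$ and $d_2 r_1 < d_1 r_2$ respectively, which is absurd. Hence $\phi = \lambda\,\Id$, proving the claim.

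The argument is essentially routine, and the main point requiring care is the step producing the destabilising data: one must ensure that $\psi$ is simultaneously nonzero and everywhere degenerate, which is exactly what guarantees that both $\mathcal{K}$ and $\mathcal{I}$ are proper nonzero subsheaves to which the stability inequality genuinely applies. The only other technical input is the additivity of the degree in the above short exact sequence of coherent (not necessarily locally free) sheaves, which follows from the definition of $\deg$ via $c_1(\det(-))$ together with the multiplicativity of the determinant line bundle in exact sequences (cf.\ Definition \ref{defrkdeg}).
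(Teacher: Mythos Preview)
Your argument is correct and is essentially the classical proof that appears in the cited reference \cite[Corollary 5.7.14]{Kobook}. Note, however, that the paper itself does not supply a proof of this lemma: it merely records the statement with a citation to Kobayashi's book, so there is no in-paper proof to compare against. Your write-up could therefore serve as a self-contained justification in place of the bare citation.
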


\begin{definition}
A coherent sheaf $\mathcal{F}$ is called \textbf{reflexive} if it is isomorphic to its \textbf{reflexive hull} $\mathcal{F}^{\vee \vee}$.
\end{definition}

\begin{definition}
	A subsheaf $\mathcal{F}$ of $\mathcal{E}$ is called \textbf{saturated} if $\mathcal{E} / \mathcal{F}$ is torsion-free. Any subsheaf $\mathcal{F}$ of $\mathcal{E}$ has a minimal saturated subsheaf containing $\mathcal{F}$, called the \textbf{saturation} of $\mathcal{F}$ in $\mathcal{E}$, defined as the kernel of the surjection $\mathcal{E} \to \mathcal{E} / \mathcal{F} \to (\mathcal{E} / \mathcal{F}) / T(\mathcal{E} / \mathcal{F})$, where $T(\mathcal{E} / \mathcal{F})$ is the maximal torsion subsheaf of $\mathcal{E} / \mathcal{F}$ \cite[Section 1.1]{H-L}.
\end{definition}

It is well known that saturated subsheaves of a locally free sheaf $\mathcal{E}$ are reflexive (\emph{cf.}~\cite[Proposition~5.5.22]{Kobook}). An important fact is the following.
\begin{lem}\emph{(\emph{cf.}~\cite[Corollaries 5.5.11, 5.5.15, and 5.5.20]{Kobook})} \label{lemcdrefshf}
	Every coherent sheaf is locally free outside of a Zariski closed subset of codimension at least 1. Moreover, torsion-free $($resp.~reflexive$)$ sheaves are locally free outside of a Zariski closed subset of codimension at least $2$ $($resp.~$3)$.
\end{lem}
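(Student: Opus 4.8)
The plan is to reduce everything to a local statement about stalks over the regular local rings $\mathcal{O}_{X,x}$ and to control the codimension of the locus where $\mathcal{F}$ fails to be free. Write $S(\mathcal{F}) := \{x \in X : \mathcal{F}_x \text{ is not a free } \mathcal{O}_{X,x}\text{-module}\}$ for this \emph{singularity locus}; since the free locus of a coherent sheaf is open, $S(\mathcal{F})$ is Zariski closed, and the content of the lemma is a lower bound on $\mathrm{codim}\, S(\mathcal{F})$. The essential homological input, which I would quote as a foundational result, is the Auslander--Buchsbaum formula: for a finitely generated module $M$ over a regular local ring $R$ one has $\mathrm{pd}_R M + \mathrm{depth}_R M = \dim R$. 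Because $X$ is smooth, every $\mathcal{O}_{X,x}$ is regular, so $\mathcal{F}_x$ is free if and only if $\mathrm{pd}_{\mathcal{O}_{X,x}} \mathcal{F}_x = 0$, i.e. if and only if $\mathrm{depth}_{\mathcal{O}_{X,x}} \mathcal{F}_x = \dim \mathcal{O}_{X,x}$.

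For the first assertion, note that at the generic point $\eta$ of $X$ the stalk $\mathcal{F}_\eta$ is a finite-dimensional vector space over the function field, hence free; by openness of the free locus $\mathcal{F}$ is free on a dense open set, so $\mathrm{codim}\, S(\mathcal{F}) \geq 1$. The strategy for the two sharper bounds is uniform: let $Z$ be an irreducible component of $S(\mathcal{F})$ of codimension $c$ and let $x$ be its generic point, so that $R := \mathcal{O}_{X,x}$ is regular local of dimension $c$ and $\mathcal{F}_x$ is a nonfree $R$-module, forcing $\mathrm{depth}_R \mathcal{F}_x \leq c-1$ by Auslander--Buchsbaum. I would then rule out small values of $c$ by producing a contradictory lower bound on $\mathrm{depth}_R \mathcal{F}_x$ coming from the torsion-free, resp.\ reflexive, hypothesis.

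Concretely, if $\mathcal{F}$ is torsion-free then it satisfies Serre's condition $S_1$: over the domain $R$ any nonzero torsion-free module has a nonzerodivisor, so $\mathrm{depth}_R \mathcal{F}_x \geq \min(1,c)$. This already excludes $c=1$ (equivalently: a finitely generated torsion-free module over the discrete valuation ring $\mathcal{O}_{X,x}$ is automatically free), giving $\mathrm{codim}\, S(\mathcal{F}) \geq 2$. If moreover $\mathcal{F}$ is reflexive then it satisfies $S_2$, so $\mathrm{depth}_R \mathcal{F}_x \geq \min(2,c)$; this additionally excludes $c=2$, since for $c=2$ it yields $\mathrm{depth}_R \mathcal{F}_x = 2 = \dim R$ and hence freeness, a contradiction. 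Combining with the torsion-free case we obtain $\mathrm{codim}\, S(\mathcal{F}) \geq 3$. Equivalently, one may phrase these two steps as the structural facts that finitely generated torsion-free modules over a one-dimensional regular local ring, and reflexive modules over a two-dimensional regular local ring, are free.

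The main obstacle is the verification of the depth (Serre) conditions, and in particular the implication that a reflexive sheaf satisfies $S_2$. For this I would use the characterisation, valid on the normal variety $X$, that a coherent sheaf is reflexive precisely when it is torsion-free and $S_2$; equivalently, every reflexive sheaf is a dual $\mathcal{G}^\vee = \mathcal{H}om_{\mathcal{O}_X}(\mathcal{G}, \mathcal{O}_X)$, and duals of coherent sheaves satisfy $S_2$ (a local-cohomology computation showing that the local depth of a $\mathcal{H}om$ into $\mathcal{O}_X$ is at least $2$ in dimension $\geq 2$). The torsion-free $S_1$ statement is elementary, while the Auslander--Buchsbaum formula and the reflexive/$S_2$ equivalence are the two nontrivial homological inputs I would cite rather than reprove.
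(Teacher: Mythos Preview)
Your argument is correct and is the standard commutative-algebra route to this result: control the singularity locus via Auslander--Buchsbaum and the Serre conditions $S_1$, $S_2$ satisfied by torsion-free and reflexive sheaves respectively. Note, however, that the paper does not actually give its own proof of this lemma; it is stated as a known fact with a reference to Kobayashi's book (Corollaries 5.5.11, 5.5.15, 5.5.20), so there is no argument in the paper to compare yours against. Your write-up would serve perfectly well as a self-contained proof, and the ingredients you flag as the nontrivial inputs (Auslander--Buchsbaum and the reflexive $\Leftrightarrow$ torsion-free plus $S_2$ characterisation on a smooth variety) are exactly the ones one would cite.
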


 We have the following lemma, which follows from the well-known fact that, in order to test slope stability of a vector bundle $\mathcal{E}$, it is sufficient to consider its saturated subsheaves \cite[Proposition 5.7.6]{Kobook}.

\begin{lem}\label{refl}
	$\mathcal{E}$ is slope stable if and only if $\mu (\mathcal{E}) > \mu (\mathcal{F})$ for any saturated subsheaf $\mathcal{F} \subset \mathcal{E}$ with $0 < \rk (\mathcal{F}) < \rk (\mathcal{E})$.
\end{lem}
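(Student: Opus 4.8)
The plan is to dispatch the forward implication trivially and reduce the reverse implication to the behaviour of rank and degree under saturation. First I would observe that the forward direction is immediate: if $\mathcal{E}$ is slope stable, then by Definition \ref{defmtstab} the inequality $\mu(\mathcal{E}) > \mu(\mathcal{F})$ holds for \emph{every} coherent subsheaf $\mathcal{F}$ of intermediate rank, and in particular for every saturated one. So the content is entirely in the converse.

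For the reverse direction, suppose $\mu(\mathcal{E}) > \mu(\mathcal{F})$ holds for every saturated subsheaf $\mathcal{F}$ with $0 < \rk(\mathcal{F}) < \rk(\mathcal{E})$, and let $\mathcal{G} \subset \mathcal{E}$ be an arbitrary coherent subsheaf with $0 < \rk(\mathcal{G}) < r$. Let $\bar{\mathcal{G}}$ denote the saturation of $\mathcal{G}$ in $\mathcal{E}$; by the definition recalled above, $\bar{\mathcal{G}}$ is the preimage in $\mathcal{E}$ of $T(\mathcal{E}/\mathcal{G})$, so that $\bar{\mathcal{G}}/\mathcal{G} = T(\mathcal{E}/\mathcal{G}) =: T$ is a torsion sheaf and we have a short exact sequence
\[
0 \to \mathcal{G} \to \bar{\mathcal{G}} \to T \to 0 .
\]
I would then compare the slopes of $\mathcal{G}$ and $\bar{\mathcal{G}}$ via two observations. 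Since rank is additive in short exact sequences and $\rk(T)=0$, we get $\rk(\bar{\mathcal{G}}) = \rk(\mathcal{G})$; in particular $\bar{\mathcal{G}}$ is a saturated subsheaf of intermediate rank, so the hypothesis applies to it. Since the first Chern class is also additive in short exact sequences, we get $\deg(\bar{\mathcal{G}}) = \deg(\mathcal{G}) + \deg(T)$.

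The crux of the argument is the claim $\deg(T) \geq 0$ for the torsion sheaf $T$. Here I would use that $\det T \cong \mathcal{O}_X(D)$ for an effective divisor $D$ supported on the codimension-one part of $\mathrm{Supp}(T)$, weighted by the generic lengths of $T$ along each component, whence
\[
\deg(T) = \int_X c_1(\mathcal{O}_X(D)) \, c_1(L)^{n-1}/(n-1)! = (D \cdot L^{n-1})/(n-1)! \geq 0,
\]
because $D$ is effective and $L$ is ample. Combining the two observations gives $\mu(\mathcal{G}) \leq \mu(\bar{\mathcal{G}}) < \mu(\mathcal{E})$, where the strict inequality is the hypothesis applied to the saturated sheaf $\bar{\mathcal{G}}$; since $\mathcal{G}$ was arbitrary, this is precisely slope stability. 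The main obstacle is exactly this non-negativity of $\deg(T)$: it is where the geometry enters, requiring one to identify the determinant of a torsion sheaf with an effective codimension-one cycle and then to invoke the ampleness of $L$ to pair it positively against $c_1(L)^{n-1}$.
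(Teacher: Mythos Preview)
Your argument is correct. The paper does not actually supply a proof of this lemma: it simply records it as a consequence of the well-known fact that slope stability need only be tested against saturated subsheaves, citing \cite[Proposition 5.7.6]{Kobook}. Your proposal is precisely the standard argument behind that citation --- saturation preserves rank, can only increase degree (because the torsion quotient has determinant $\mathcal{O}_X(D)$ for an effective divisor $D$, which pairs non-negatively with powers of the ample class $c_1(L)$), and hence can only increase slope --- so you have unpacked what the paper leaves as a black-box reference.
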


\subsection{Filtrations and Quot-scheme limit} \label{sccmrquotschl}
We recall some basic definitions and results that concern Quot-schemes. We refer to the reference book of Huybrechts and Lehn \cite{H-L} for more details.
\begin{definition}
	A coherent sheaf $\mathcal{F}$ is said to be \textbf{$k$-regular} if $H^i (\mathcal{F}(k-i)) =0$ for all $i >0$. The \textbf{Castelnuovo--Mumford regularity} of $\mathcal{F}$ is the integer defined by 
	\begin{equation*}
		\mathrm{reg}(\mathcal{F}) := \inf_{k \in \mathbb{Z}} \{ \mathcal{F} \text{ is $k$-regular.} \} .
	\end{equation*}
\end{definition}
By Serre's vanishing theorem, any coherent sheaf is $k$-regular for $k$ large enough and thus its regularity is well defined. Recall \cite[Lemma 1.7.2]{H-L} that if $\mathcal{F}$ is $k$-regular, then $\mathcal{F}$ is $k'$-regular for all $k' \ge k$ and $\mathcal{F}(k)$ is globally generated. Moreover, the natural homomorphism
\begin{equation*}
	H^0 ( \mathcal{F}(k)) \otimes H^0 ( \mathcal{O}_X (l)) \longrightarrow H^0 ( \mathcal{F}(k+l))
\end{equation*}
is surjective for $l \ge 0$.

In particular, if $\mathcal{F}$ is $k$-regular, $\mathcal{F}$ can be written as a quotient
\begin{equation} \label{fglgenquot}
	\rho : V \otimes \mathcal{O}_X (-k) \relbar\joinrel\twoheadrightarrow \mathcal{F}
\end{equation}
where the vector space $V := H^0 ( \mathcal{F}(k))$ has dimension $P_{\mathcal{F}}(k)$, the Hilbert polynomial of $\mathcal{F}$.

We now consider the holomorphic vector bundle $\mathcal{E}$. We take $k \ge \mathrm{reg} (\mathcal{E})$ so that $\mathcal{E}$ is given as a quotient $\rho : V \otimes \mathcal{O}_X (-k) \to \mathcal{E}$, where
$$V = H^0 (\mathcal{E}(k)), \quad \dim V = P_{\mathcal{E}}(k) = h^0 (\mathcal{E}(k)).$$

In other words, $\rho : V \otimes \mathcal{O}_X (-k) \to \mathcal{E}$ defines a point  in the Quot-scheme $\mathrm{Quot} (\mathcal{Q}, P_{\mathcal{E}})$ where we have set $\mathcal{Q} := V \otimes \mathcal{O}_X (-k)$. The reader is referred to \cite[Chapter 2]{H-L} for more details on Quot-schemes.

Suppose that we are given a 1-parameter subgroup (1-PS for short) $\sigma_T : \mathbb{C}^* \ni T \mapsto \sigma_T \in \SL(V)$. This defines a $\mathbb{C}^*$-orbit in $\mathrm{Quot} (\mathcal{Q}, P_{\mathcal{E}})$ defined by $\{ \rho \circ \sigma_T \}_{T \in \mathbb{C}^*}$. We can give an explicit description of the limit of this orbit as $T \to 0$, following \cite[Section 4.4]{H-L}.

Recall that giving a 1-PS $\sigma_T : \mathbb{C}^* \to \SL(V)$ is equivalent to giving a weight space decomposition $V = \bigoplus_{i \in \mathbb{Z}} V_{i}$ of $V$, where $\sigma_T$ acts by $\sigma_T \cdot v = T^i v$ for $v \in V_{i}$. Note that $V_{i} \neq 0$ for only finitely many $i$'s. This defines a filtration
\begin{equation*}
	V_{\le i} := \bigoplus_{j \le i} V_{j}
\end{equation*}
which in turn induces a subsheaf $\mathcal{E}_{\le i}$ of $\mathcal{E}$ by
\begin{equation*}
	\mathcal{E}_{\le i} := \rho (V_{\le i} \otimes \mathcal{O}_X (-k)) ,
\end{equation*}
defining a filtration $\{ \mathcal{E}_{\le i} \}_{i \in \mathbb{Z}}$ of $\mathcal{E}$ by subsheaves. Then $\rho$ induces surjections $\rho_i : V_{i} \otimes \mathcal{O}_X (-k) \to \mathcal{E}_i$ (with $\mathcal{E}_i : = \mathcal{E}_{\le i} / \mathcal{E}_{\le i-1}$), and summing up these graded pieces we get a point in $\mathrm{Quot} (\mathcal{Q},P_{\mathcal{E}})$ represented by the sheaf
\begin{equation} \label{rholimquot}
	\hat{\rho} := \bigoplus_{i \in \mathbb{Z}} \rho_i : V \otimes \mathcal{O}_X (-k) \longrightarrow \hat{\mathcal{E}} := \bigoplus_{i \in \mathbb{Z}} \mathcal{E}_i.
\end{equation}

The importance of this sheaf is that it is, up to equivalence, the limit of the 1-PS $\sigma_T$ in $\mathrm{Quot} (\mathcal{Q}, P_{\mathcal{E}})$; recall that two quotient sheaves $\rho_i : \mathcal{Q} \to \mathcal{E}_i, \ i = 1,2$ are equivalent if and only if there exists an isomorphism $\psi : \mathcal{E}_1 \to \mathcal{E}_2$ of sheaves such that $\rho_2 = \psi \circ \rho_1$. Writing $[ \rho ]$ for the equivalence class of $\rho$, the above result can be stated as follows.

\begin{lem} \textup{(\emph{cf.}~\cite[Lemma 4.4.3]{H-L})} \label{llimquotsch}
	$[\hat{\rho}] \in \mathrm{Quot} (\mathcal{Q},P_{\mathcal{E}})$ is the limit of $[\rho : V \otimes \mathcal{O}_X (-k) \to \mathcal{E}]$ as $T \to 0$ under the 1-parameter subgroup defined by $\sigma_T : \mathbb{C}^*  \to \SL(V)$.
	\end{lem}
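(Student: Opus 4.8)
The plan is to realise the orbit $\{[\rho\circ\sigma_T]\}_{T\in\CC^*}$ as the restriction to $\CC^*$ of an explicit flat family of quotients over $\AA^1$, and to read off its central fibre. Since $\mathrm{Quot}(\mathcal{Q},P_{\mathcal{E}})$ is projective, hence proper and separated over $\CC$, the orbit map $\CC^*\ni T\mapsto[\rho\circ\sigma_T]$ extends uniquely to a morphism $\AA^1\to\mathrm{Quot}(\mathcal{Q},P_{\mathcal{E}})$, and the limit $\lim_{T\to0}[\rho\circ\sigma_T]$ is precisely the image of $0$; by separatedness this image is computed by \emph{any} flat family over $\AA^1$ restricting to the given orbit over $\CC^*$. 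It thus suffices to produce one such family whose fibre over $0$ is $[\hat\rho]$. This is the content of \cite[Lemma 4.4.3]{H-L}, and I would follow the Rees-module construction underlying it.

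First I would form the Rees sheaf attached to the filtration $\{\mathcal{E}_{\le i}\}_{i\in\ZZ}$. Working on $X\times\AA^1$ with coordinate $T$ on $\AA^1$, set
\[
\widetilde{\mathcal{E}}:=\bigoplus_{i\in\ZZ}\mathcal{E}_{\le i}\,T^{i}\ \subseteq\ \mathcal{E}\otimes_{\CC}\CC[T,T^{-1}],
\]
a $\CC[T]$-submodule of $\mathcal{E}\otimes_{\CC}\CC[T,T^{-1}]$. Because $V_{i,k}\neq0$ for only finitely many $i$, the filtration is finite, so $\widetilde{\mathcal{E}}$ is a coherent sheaf on $X\times\AA^1$; and since it is a submodule of the $T$-torsion-free module $\mathcal{E}\otimes_{\CC}\CC[T,T^{-1}]$, it is $T$-torsion-free, hence flat over $\AA^1$. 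Next I would define the quotient map from the constant family $\mathcal{Q}\otimes_{\CC}\CC[T]$ by sending $v\in V_{i,k}$ to $\rho(v)\,T^{i}$ and extending $\CC[T]$-linearly; this lands in $\widetilde{\mathcal{E}}$ since $\rho(v)\in\mathcal{E}_{\le i}$, and multiplying the generators $\rho(v)T^{j}$ ($v\in V_{j,k}$, $j\le i$) by $T^{i-j}$ shows the map surjects onto each graded piece $\mathcal{E}_{\le i}T^{i}$, hence onto $\widetilde{\mathcal{E}}$. This exhibits a family of quotients of $\mathcal{Q}$ parametrised by $\AA^1$.

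It then remains to identify the fibres. For $T=c\neq0$, reduction modulo $(T-c)$ identifies $\widetilde{\mathcal{E}}/(T-c)\cong\mathcal{E}$ by evaluation $T\mapsto c$, under which $v\in V_{i,k}$ maps to $c^{i}\rho(v)=(\rho\circ\sigma_c)(v)$; thus the fibre is $[\rho\circ\sigma_c]$ and the family restricts to the orbit over $\CC^*$. For $T=0$, reduction modulo $T$ gives $\widetilde{\mathcal{E}}/T\widetilde{\mathcal{E}}\cong\bigoplus_{i}\mathcal{E}_{\le i}/\mathcal{E}_{\le i-1}=\bigoplus_{i}\mathcal{E}_i=\hat{\mathcal{E}}$, and $\rho(v)T^{i}$ maps to the class of $\rho(v)$ in $\mathcal{E}_{\le i}/\mathcal{E}_{\le i-1}=\mathcal{E}_i$, which is exactly $\rho_i(v)$; hence the central quotient map is $\bigoplus_{i}\rho_i=\hat\rho$. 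Since Hilbert polynomials are additive in the short exact sequences $0\to\mathcal{E}_{\le i-1}\to\mathcal{E}_{\le i}\to\mathcal{E}_i\to0$ and the filtration telescopes, $\hat{\mathcal{E}}$ has Hilbert polynomial $P_{\mathcal{E}}$, so the family indeed lies in $\mathrm{Quot}(\mathcal{Q},P_{\mathcal{E}})$. By uniqueness of the extension across $0$, $\lim_{T\to0}[\rho\circ\sigma_T]=[\hat\rho]$.

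I expect the main obstacle to be the flatness bookkeeping: verifying that $\widetilde{\mathcal{E}}$ is genuinely coherent and $\AA^1$-flat and, crucially, that the formation of its central fibre commutes with base change, so that $\widetilde{\mathcal{E}}/T\widetilde{\mathcal{E}}$ really is the associated graded $\hat{\mathcal{E}}$ rather than some larger sheaf. This is where the finiteness of the filtration and the $T$-torsion-freeness are used, and it is the technical heart hidden in \cite[Lemma 4.4.3]{H-L}; the remaining identifications of the generic and special fibres are then formal.
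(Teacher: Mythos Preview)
Your proposal is correct and follows essentially the same approach as the paper: both construct the Rees sheaf $\widetilde{\mathcal{E}}=\bigoplus_{i}\mathcal{E}_{\le i}\,T^{i}$ together with the quotient map $v\in V_{i,k}\mapsto \rho(v)\,T^{i}$ (which the paper writes as $\theta=\tilde{\rho}\circ\gamma$), and then identify the generic and central fibres. Your write-up is somewhat more explicit about flatness ($T$-torsion-freeness) and about invoking properness and separatedness of the Quot-scheme to guarantee uniqueness of the extension, but these are refinements of the same argument rather than a different route.
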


\noindent We shall review an outline of the proof of this result, following \cite{H-L}, for the reader's convenience.

\begin{proof}[Sketch of the proof]
	We construct a family of sheaves 
	\begin{equation*}
		\theta : V \otimes \mathcal{O}_X (-k) \otimes \mathbb{C} [T] \longrightarrow \tilde{\mathcal{E}}
	\end{equation*}
	over $\mathbb{C} = \mathrm{Spec}(\mathbb{C}[T])$, such that we have $[\theta_{\alpha}] = [\rho ] \circ \sigma_T$ for the fibre $\theta_{\alpha}$ over $\alpha \in \mathbb{C}^*$ and look at its behaviour under the limit $\alpha \to 0$; $\alpha \in \mathbb{C}^*$ corresponds to the maximal ideal $(T - \alpha) \in \mathrm{Spec}(\mathbb{C}[T])$, and the limit $\alpha \to 0$ corresponds to $T \to 0$. Throughout in the proof, the tensor product is meant to be over $\mathbb{C}$.
	
	Given the decomposition $V = \bigoplus_{i \in \mathbb{Z}} V_{i}$, we define a map
	\begin{equation*}
		\gamma : V \otimes \mathbb{C} [T] = \bigoplus_{i \in \mathbb{Z}} V_{i} \otimes \mathbb{C} [T] \longrightarrow \bigoplus_{i \in \mathbb{Z}} V_{\le i} \otimes T^i \subset V\otimes \mathbb{C}[T,T^{-1}]
	\end{equation*}
	by $\gamma (v_j \otimes p) := v_j \otimes ( T^j \cdot p)$ for $v_j \otimes p \in V_j \otimes \mathbb{C} [T]$. Note that $\gamma$ is an isomorphism of vector spaces which coincides with the action of $\sigma_T$. In other words, the action of $\sigma_T$ on $V$ defines an isomorphism
	\begin{equation*}
		\gamma : V \otimes \mathcal{O}_X (-k) \otimes \mathbb{C} [T] \stackrel{\sim}{\longrightarrow}   \bigoplus_{i \in \mathbb{Z}} V_{\le i} \otimes \mathcal{O}_X (-k)  \otimes T^i =: \tilde{\mathcal{V}},
	\end{equation*}
	where we used the same symbol $\gamma$ by abuse of notation.
	
	Note also that $\rho$ induces the following surjection
	\begin{equation} \label{eqrtdgrd}
		\tilde{\rho} : \tilde{\mathcal{V}} \longrightarrow \tilde{\mathcal{E}} := \bigoplus_{i \in \mathbb{Z}} \mathcal{E}_{ \le i} \otimes T^i,
	\end{equation}
	defining a family $\tilde{\mathcal{E}}$ of sheaves over $\mathbb{C}$. Thus the desired family $\theta$ can be given by $\tilde{\rho} \circ \gamma$; restricting to the central fibre at $T =0$, we have
	\begin{equation*}
		\tilde{\mathcal{E}}_0 = \tilde{\mathcal{E}} / T \cdot \tilde{\mathcal{E}} = \bigoplus_{i \in \mathbb{Z}} \mathcal{E}_i,
	\end{equation*}
	while the noncentral fibre is equivalent to $\rho \circ \sigma_T$ since $\gamma$ coincides with the action of $\sigma_T$.
\end{proof}

We shall later develop a ``differential-geometric'' version of this construction, in terms of Fubini--Study metrics. For this purpose, we shall rephrase the above lemma as follows. Suppose that we write $$\hat{\rho} = \bigoplus_{i \in \mathbb{Z}} \rho_i,$$ with
\begin{equation*}
	 \rho_{i} :  V_{i } \otimes \mathcal{O}_X(-k) \longrightarrow  \mathcal{E}_{i}= \mathcal{E}_{\le i} / \mathcal{E}_{\le i-1}
\end{equation*}
for the limit in the Quot-scheme by $\sigma_T$ as in (\ref{rholimquot}). By abuse of notation we write $\rho (V_i)$ (resp.~$\rho_i (V_i)$) for $\rho (V_i \otimes \mathcal{O}_X(-k))$ (resp.~$\rho_i (V_i \otimes \mathcal{O}_X(-k))$) in what follows. Noting $\rho (\sigma_T \cdot V_{i } ) = T^i \rho ( V_{i })$ and recalling (\ref{eqrtdgrd}), we find that $\rho (\sigma_T \cdot V_{i })$ must be contained in the sheaf $\mathcal{E}_{\le i}$. Using the notation above, we may thus write
\begin{equation*}
	\rho (\sigma_T \cdot V_{i }) = T^{i} \rho_{i} (V_{i  }) + T^{i } (\text{elements of $\mathcal{E}_{\le i-1}$}).
\end{equation*}
Suppose that we write $\rho_{j \to i} (V_{j })$ for the elements of $V_{j }$ that are mapped to $\mathcal{E}_{i}$. With this notation, we collect the above for all $i$'s by using $V = \bigoplus_{i \in \mathbb{Z}} V_{i}$, to get
\begin{equation} \label{exprhoint}
	\rho (\sigma_T \cdot V ) = \sum_{i \in \mathbb{Z}} T^{i} \left( \rho_{i \to i} (V_{i }) + \sum_{j > i} T^{j - i} \rho_{j \to i} (V_{ j }) \right) .
\end{equation}
Thus, we may write the limit of $\rho \circ \sigma_T$ as $T \to 0$ as
\begin{equation} \label{exprhointsht}
	\rho (\sigma_T \cdot V ) = \bigoplus_{i \in \mathbb{Z}} T^{i} \left( \rho_{i} (V_{i }) + o(T) \right),
\end{equation}
where $o(T)$ stands for the terms that go to zero as $T \to 0$, and $T^i$ is the weight of $\sigma_T$ on $\mathcal{E}_i$.

\begin{rem} \label{remxlfen}
	It is important to note that $\mathcal{E}_i$ above is a priori just a coherent sheaf and in general \textit{not} locally free; it may even have torsion. However, it is still possible to define a Zariski open subset of $X$ over which all $\mathcal{E}_{\le i}$ and $\mathcal{E}_i$ are locally free (torsion sheaves are treated as zero), recalling that $X$ is smooth (by Lemma \ref{lemcdrefshf} or \cite[p.11]{H-L}).
\end{rem}

\begin{rem} The Castelnuovo--Mumford regularity of $\mathcal{E}$ does not play a significant role in what follows, contrary to the important role it plays in the construction of the Quot-scheme as above. In the rest of this paper $k$ only needs to be large enough so that $\mathcal{E} (k)$ is globally generated, since all we need is the quotient map (\ref{fglgenquot}), but there will be little harm in consistently assuming $k \ge \mathrm{reg} (\mathcal{E} )$.  
\end{rem}

\subsection{Fubini--Study metrics on vector bundles} \label{fsmetvb}
Suppose that we fix $k$ such that $\mathcal{E}(k)$ is globally generated, and hence there exists a holomorphic map $\Phi: X \to \mathrm{Gr}(r , H^0(\mathcal{E}(k))^{\vee})$ to the Grassmannian such that the pullback under $\Phi$ of the universal bundle is isomorphic to $\mathcal{E}(k)$. More precisely, writing $\mathcal{U}$ for the tautological bundle over the Grassmannian $\mathrm{Gr}(r , H^0(\mathcal{E}(k))^{\vee})$ of $r$-planes (rather than quotients), we have $\Phi^* \mathcal{U}^{\vee} \cong \mathcal{E}(k)$. 

Recall that a positive definite Hermitian form on $H^0 (\mathcal{E}(k))$ defines a Hermitian metric on (the complex vector bundle) $U^{\vee}$ on $\mathrm{Gr}(r , H^0(\mathcal{E}(k))^{\vee})$, called the Fubini--Study metric on the Grassmannian (see \emph{e.g.}~\cite[Section 5.1.1]{M-M}). By pulling them back by $\Phi$, we have Hermitian metrics on $E$ that are also called Fubini--Study metrics. We recall the details of this construction. Noting that the sections of $\mathcal{U}^{\vee}$ can be naturally identified with $H^0 (\mathcal{E}(k))$, Hermitian forms on $U^{\vee}$ are the elements of $H^0 (\mathcal{E}(k))^{\vee} \otimes \overline{H^0 (\mathcal{E}(k))^{\vee}}$, where the bar stands for the complex conjugate.

We fix once for all a reference Hermitian metric $h_{\mathrm{ref}}$ on $E$. With the fixed Hermitian metric $h_L$ on $L$, we have a Hermitian metric $h_{\mathrm{ref}} \otimes h_L^{\otimes k}$ on $E(k)$, we define $\{ e_1 (x) , \dots , e_r (x) \}$ to be an $h_{\mathrm{ref}} \otimes h_L^{\otimes k}$-local orthonormal frame of $E$. We also fix a reference Hermitian form $I_{\mathbb{S}}$ on $H^0(\mathcal{E}(k))$ to be the $L^2$-Hermitian form defined by $h_{\mathrm{ref}} \otimes h_L^{\otimes k}$ and $\omega^n/n!$ (see also Remark \ref{remfsvbs}); the notation here is meant to imply that the basis $\mathbb{S} := \{ s_1 , \dots , s_{N_k} \}$, $N_k := \dim H^0(\mathcal{E}(k))$, for $H^0(\mathcal{E}(k))$ is a fixed $I_{\mathbb{S}}$-orthonormal basis. We may identify $I_{\mathbb{S}}$ with $\mathbb{S}$ up to unitary equivalence, since fixing a positive Hermitian form is equivalent to fixing its (unitary equivalence class of) orthonormal basis. 


We can also define the metric duals of the above. Fixing $\mathbb{S}$ as above, we have a basis $\{ s^*_1 , \dots , s^*_{N_k}  \}$ for $\overline{H^0(\mathcal{E}(k))^{\vee}}$ that is $I_{\mathbb{S}}$-metric dual of $\mathbb{S}$, and a local frame $\{ e^*_1 (x) , \dots , e^*_r (x) \}$ for $\overline{E(k)^{\vee}}$ that is $h_{\mathrm{ref}} \otimes h_L^{\otimes k}$-metric dual to the one $\{ e_1 (x) , \dots , e_r (x) \}$ for $E(k)$. We also write $s_i(x)$ (resp.~$s^*_i (x)$) for the evaluation of $s_i$ (resp.~$s^*_i$) at $x \in X$ with respect to the trivialisation given by $\{ e_1(x) ,  \dots , e_r (x) \}$.

We first define $C^{\infty}$-maps
\begin{equation*}
	Q_{\mathbb{S}, \mathrm{ref}}: E(k) \longrightarrow H^0 (\mathcal{E}(k)) \otimes C^{\infty}_X,
\end{equation*}
and
\begin{equation*}
	Q^*_{\mathbb{S}, \mathrm{ref}}:  \overline{H^0 (\mathcal{E}(k))^{\vee}} \otimes \overline{C^{\infty}_X} \longrightarrow \overline{E(k)^{\vee}},
\end{equation*}
with respect to $\mathbb{S}$ and $\{ e_1 (x) , \dots , e_r (x) \}$ as fixed above and hence defined globally over $X$. The bar, again, denotes the complex conjugate. A well-known result is the following, whose proof can be found in \cite[Remark 3.5]{Wang1} or \cite[Theorem 5.1.16]{M-M}.

\begin{lem} \textup{(\emph{cf.}~\cite[Remark 3.5]{Wang1} or \cite[Theorem 5.1.16]{M-M})} \label{lemfsvbs}
	Suppose that we fix a Hermitian metric $h_{\mathrm{ref}}$ on $E$ and an $h_{\mathrm{ref}} \otimes h_L^{\otimes k}$-local orthonormal frame of $E(k)$ which we write as $\{ e_1(x) ,  \dots , e_r (x) \}$, as above. Let $\mathbb{S} = \{ s_1 , \dots , s_{N_k} \}$ be an orthonormal basis for $H^0(\mathcal{E}(k))$ with respect to the $L^2$-Hermitian form defined by $h_{\mathrm{ref}} \otimes h_L^{\otimes k}$ and $\omega^n/n!$, and write $I_{\mathbb{S}}$ for the $L^2$-Hermitian form itself. Then there exists a local $N_k \times r$ matrix $Q_{\mathbb{S} , \mathrm{ref}} (x)$ which smoothly depends on $x \in X$ such that
	\begin{equation*}
		s_i (x) = \sum_{\alpha =1}^r Q_{\mathbb{S}, \mathrm{ref}} (x)_{i \alpha}e_{\alpha} (x) ,\ \ \  \sum_{i=1}^{N_k} Q^*_{\mathbb{S}, \mathrm{ref}} (x)_{ \alpha i} s^*_i(x)= e^*_{\alpha},
	\end{equation*}
	where the metric dual $*$ is defined with respect to $h_{\mathrm{ref}} \otimes h_L^{\otimes k}$ and $I_{\mathbb{S}}$.
	
	The Hermitian metric on $E(k)$ defined by
	\begin{equation*}
		h_{\mathbb{S}_k , \mathrm{ref}} \otimes h_L^{\otimes k} (x) := Q_{\mathbb{S}, \mathrm{ref}}^*(x) \circ I_{\mathbb{S}} \circ Q_{\mathbb{S}, \mathrm{ref}}(x)
	\end{equation*}
	agrees with the pullback of the Fubini--Study metric on $U^{\vee}$ with respect to $I_{\mathbb{S}}$ over the Grassmannian $\mathrm{Gr}(r , H^0(\mathcal{E}(k))^{\vee})$ by the map $\Phi: X \hookrightarrow \mathrm{Gr}(r , H^0(\mathcal{E}(k))^{\vee})$.
\end{lem}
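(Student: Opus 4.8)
The plan is to prove the two displayed identities by unwinding the definition of the Kodaira map $\Phi$, and then to identify the pulled-back Fubini--Study metric by a fibrewise computation through the tautological sequence on the Grassmannian.

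First I would fix the evaluation map. Since $\mathcal{E}$ is $k$-regular, $\mathcal{E}(k)$ is globally generated, so the evaluation $\mathrm{ev}\colon H^0(\mathcal{E}(k)) \otimes \mathcal{O}_X \to \mathcal{E}(k)$ is surjective; its fibrewise version $\mathrm{ev}_x\colon H^0(\mathcal{E}(k)) \to E(k)_x$ is a surjection whose dual $\mathrm{ev}_x^{\vee}\colon E(k)_x^{\vee} \hookrightarrow H^0(\mathcal{E}(k))^{\vee}$ has image the $r$-plane $\Phi(x)$, which is exactly the identification $\Phi^* \mathcal{U}^{\vee} \cong \mathcal{E}(k)$ used in the statement. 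I would then simply \emph{define} $Q_{\mathbb{S}_k, \mathrm{ref}}(x)$ to be the matrix of $\mathrm{ev}_x$ in the basis $\mathbb{S}_k$ and the local frame $\{ e_1(x), \dots, e_r(x) \}$, so that $s_i(x) = \sum_{\alpha} Q_{\mathbb{S}_k,\mathrm{ref}}(x)_{i\alpha}\, e_{\alpha}(x)$ holds by construction. Smoothness of $x \mapsto Q_{\mathbb{S}_k,\mathrm{ref}}(x)$ is then immediate from the smoothness of the sections $s_i$ and of the frame, and $Q_{\mathbb{S}_k,\mathrm{ref}}(x)$ has rank $r$ at every point precisely because $\mathrm{ev}_x$ is surjective.

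The second identity is obtained by passing to metric duals. Because $\mathbb{S}_k$ is $I_{\mathbb{S}_k}$-orthonormal and $\{ e_{\alpha}(x) \}$ is $h_{\mathrm{ref}} \otimes h_L^{\otimes k}$-orthonormal, the metric-dual families $\{ s_i^* \}$ and $\{ e_{\alpha}^* \}$ are just the corresponding conjugate dual bases, so that the adjoint $Q^*_{\mathbb{S}_k,\mathrm{ref}}(x)$ is the conjugate transpose of $Q_{\mathbb{S}_k,\mathrm{ref}}(x)$; applying the conjugate-linear duality to the first identity and reindexing yields $\sum_i Q^*_{\mathbb{S}_k,\mathrm{ref}}(x)_{\alpha i}\, s_i^*(x) = e_{\alpha}^*$. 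The only care needed here is to keep track of the two distinct metrics ($I_{\mathbb{S}_k}$ on $H^0(\mathcal{E}(k))$ and $h_{\mathrm{ref}} \otimes h_L^{\otimes k}$ on $E(k)_x$) with respect to which the two duals are taken.

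Finally, to identify $h_{\mathbb{S}_k, \mathrm{ref}} \otimes h_L^{\otimes k} = Q^*_{\mathbb{S}_k,\mathrm{ref}}\, I_{\mathbb{S}_k}\, Q_{\mathbb{S}_k,\mathrm{ref}}$ with the pulled-back Fubini--Study metric, I would compute the latter fibrewise from the tautological sequence $0 \to \mathcal{U} \to H^0(\mathcal{E}(k))^{\vee} \otimes \mathcal{O}_{\mathrm{Gr}} \to \cdots$: the hermitian form $I_{\mathbb{S}_k}$ induces a metric on $\mathcal{U}^{\vee}$ through the global sections $H^0(\mathcal{E}(k)) = H^0(\mathrm{Gr}, \mathcal{U}^{\vee})$, and over $\Phi(x)$ this metric is transported to $E(k)_x$ by the identification $\Phi^* \mathcal{U}^{\vee} \cong \mathcal{E}(k)$. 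Writing out this induced metric in the frame $\{ e_{\alpha}(x) \}$ via the relation $e_{\alpha}^* = \sum_i Q^*_{\mathbb{S}_k,\mathrm{ref}}(x)_{\alpha i}\, s_i^*$ from the second identity, one reads off exactly the matrix $Q^*_{\mathbb{S}_k,\mathrm{ref}}(x)\, I_{\mathbb{S}_k}\, Q_{\mathbb{S}_k,\mathrm{ref}}(x)$. I expect this last step to be the main obstacle, not because of any deep difficulty but because it is where all the conventions must be pinned down at once: whether the Fubini--Study metric on $U^{\vee}$ is taken as the quotient metric induced by the surjection from the trivial bundle $H^0(\mathcal{E}(k)) \otimes \mathcal{O}_{\mathrm{Gr}}$ or as the dual of the subbundle metric on $\mathcal{U} \subset H^0(\mathcal{E}(k))^{\vee} \otimes \mathcal{O}_{\mathrm{Gr}}$, together with the placement of the complex conjugates and transposes, must be arranged so that the outcome lands on $Q^*_{\mathbb{S}_k,\mathrm{ref}} I_{\mathbb{S}_k} Q_{\mathbb{S}_k,\mathrm{ref}}$ rather than on its inverse. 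Verifying that these conventions are mutually consistent with the normalisation of the cited references \cites{Wang1,M-M} is the real content of the computation.
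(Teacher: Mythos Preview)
The paper does not actually prove this lemma; it merely states it as a well-known result and defers to \cite[Remark 3.5]{Wang1} and \cite[Theorem 5.1.16]{M-M} for the proof. Your approach---defining $Q_{\mathbb{S}_k,\mathrm{ref}}(x)$ as the matrix of the evaluation map in the given bases, obtaining the dual identity by applying the metric-duality isomorphisms, and then reading off the pulled-back Fubini--Study metric from the tautological sequence---is the standard route and is correct. Your caution about the conventions (quotient metric versus dual of the subbundle metric on $U^{\vee}$, placement of conjugates) is well placed: this is exactly the bookkeeping that the cited references fix, and your outline matches their arguments.
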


\begin{rem} \label{remfsvbs}
	In the above, the $L^2$-Hermitian form $I_{\mathbb{S}}$ (defined by $h_{\mathrm{ref}} \otimes h_L^{\otimes k}$ and $\omega$) in fact involves a scaling by $\dim H^0(\mathcal{E}(k))$; see \emph{e.g.}~\cite[Section 5.2.3]{M-M}.
\end{rem}

Suppose that we now change the basis $\mathbb{S}$ to $\sigma \cdot \mathbb{S}$ with $\sigma \in \SL(H^0(\mathcal{E}(k)))$. This defines another Hermitian metric on the universal bundle $U^{\vee}$ over the Grassmannian, which we can pull back by $\Phi$ to get a Hermitian metric on $E(k)$, exactly as before. Then, noting that $(\sigma^{-1})^* \circ I_{\mathbb{S}} \circ \sigma^{-1}$ has $\sigma \cdot \mathbb{S}$ as its orthonormal basis, we may write the Hermitian metric $h_{\sigma \cdot \mathbb{S} , \mathrm{ref}} \otimes h_L^{\otimes k}$ as
\begin{equation*}
	h_{\sigma \cdot \mathbb{S} , \mathrm{ref}} \otimes h_L^{\otimes k} = Q_{\mathbb{S}, \mathrm{ref}}^* \circ ((\sigma^{-1})^* \circ I_{\mathbb{S}} \circ \sigma^{-1}) \circ Q_{\mathbb{S}, \mathrm{ref}},
\end{equation*}
which is also well known in the literature (see \emph{e.g.}~\cite[Section 2]{P-S03}). The appearance of the inverse in $\sigma^{-1}$ can also be seen from the fact that $I_{\mathbb{S}}$ is an element of $ H^0 (\mathcal{E}(k))^{\vee} \otimes \overline{H^0 (\mathcal{E}(k))^{\vee}}$ and hence $\SL(H^0(\mathcal{E}(k)))$ acts by the dual action.

In what follows, we shall adopt the following convention to simplify the notation. The Hermitian metrics $h_{\mathrm{ref}}$ and $h_L$ will be fixed as above, which defines an $L^2$-hermitian inner product $I_{\mathbb{S}}$ (with the associated orthonormal basis $\mathbb{S}$). Moreover, choosing a local orthonormal frame $e_L (x)$ of $L$ with respect to $h_L$, we identify $\{ e_1 (x) , \dots , e_r (x) \}$ with a local orthonormal frame of $E$ (rather than $E(k)$), which is associated to the Hermitian metric $h_{\mathbb{S} , \mathrm{ref}}$ on $E$ in Lemma \ref{lemfsvbs}. Recalling $Q_{\mathbb{S}, \mathrm{ref}} : E(k) \to H^0 (\mathcal{E}(k)) \otimes C^{\infty}_X$, we thus get a $C^{\infty}$-map
\begin{equation}\label{defQ}
	Q : E \longrightarrow H^0 (\mathcal{E}(k)) \otimes C^{\infty}_X (-k).
\end{equation} 
Similarly, $Q_{\mathbb{S}, \mathrm{ref}}^* : \overline{H^0(\mathcal{E}(k))^{\vee}} \otimes \overline{C^{\infty}_X} \to \overline{E(k)^{\vee}}$ yields a $C^{\infty}$-map
\begin{equation}\label{defQ*}
	Q^* : \overline{H^0 (E(k))^{\vee}} \otimes \overline{C^{\infty}_X (k)} \longrightarrow \overline{E^{\vee}},
\end{equation}
by noting $\overline{E(k)^{\vee}} \cong \overline{E^{\vee}} \otimes \overline{C^{\infty}_X(-k)}$. With these conventions, Lemma \ref{lemfsvbs} says $h_{\mathbb{S} , \mathrm{ref}} = Q^* \circ I_{\mathbb{S}} \circ Q$. In what follows we shall simply write $Q^*Q$ for $h_{\mathbb{S}, \mathrm{ref}}$ and omit $I_{\mathbb{S}}$. Also, in order not to be bothered by the inverse in $\sigma^{-1}$, we shall consistently write
\begin{equation*}
	h_{\sigma} := Q^* \sigma^* \sigma Q,
\end{equation*}
with $\sigma \in \SL(H^0(\mathcal{E}(k))^{\vee})$, to denote $h_{\sigma^{-1} \cdot \mathbb{S} , \mathrm{ref}}$.

We summarise the above argument to define the Fubini--Study metric on the bundle $E$.
\begin{definition} \label{deffsdual}
	Given a reference metric $h_{\mathrm{ref}}$ on $E$ and an $L^2$-Hermitian form $I_{\mathbb{S}}$ for $H^0(\mathcal{E}(k))$ with respect to $h_{\mathrm{ref}} \otimes h_L^{\otimes k}$ as above, there exist $C^{\infty}$-maps $Q$ and $Q^*$ defined by \eqref{defQ} and \eqref{defQ*}, such that the Hermitian metric
	\begin{equation*}
		h:= Q^*Q \in \Gamma_{C^{\infty}_X} (E^{\vee} \otimes \overline{E^{\vee}} )
	\end{equation*}
	agrees with the pullback of the Fubini--Study metric on the universal bundle over the Grassmannian defined by the Hermitian form $I_{\mathbb{S}}$ on $H^0(\mathcal{E}(k))$. $h$ is called the (reference) \textbf{Fubini--Study metric} on $E$ defined by $I_{\mathbb{S}}$.
	
	Moreover, given $\sigma \in \SL(H^0(\mathcal{E}(k))^{\vee})$, the Hermitian metric
	\begin{equation*}
		h_{\sigma} := Q^* \sigma^* \sigma Q \in \Gamma_{C^{\infty}_X} (E^{\vee} \otimes \overline{E^{\vee}} )
	\end{equation*}
	agrees with the one defined by the Hermitian form $I_{\sigma^{-1} \cdot \mathbb{S}} $ on $H^0(\mathcal{E}(k))$. $h_{\sigma}$ is called the \textbf{Fubini--Study metric} on $E$ defined by the positive Hermitian form $\sigma^* \sigma = \sigma^* \circ I_{\mathbb{S}} \circ \sigma$ on $H^0(\mathcal{E}(k))$.
\end{definition}
	
When it is necessary to make the exponent $k$ more explicit, we also write $h_k$ for $h$, for example. The construction above gives us a way of associating a Hermitian metric $FS(H)$ on $E$ to a positive definite Hermitian form $H$ on $H^0 (\mathcal{E}(k))$; we simply choose $\sigma \in GL (H^0(\mathcal{E} (k))^{\vee})$ so that $H = \sigma^* \circ I_{\mathbb{S}} \circ \sigma = \sigma^* \sigma$, and define $FS(H) = h_{\sigma}$ as above.

Recall that there is an alternative definition (\cite{Wang1}, see also \cite[Theorem 5.1.16]{M-M}) of $FS(H)$ by means of the equation
\begin{equation} \label{deffseq}
	\sum_{i=1}^{N_k} s_i\otimes s_i^{*_{FS(H)}} = \Id_E
\end{equation}
where $\{s_i\}$ is an $H$-orthonormal basis for $H^0 (\mathcal{E}(k))$ and $s_i^{*_{FS(H)}}$ is the $FS(H)$-metric dual of $s_i$. Defining

\begin{equation*}
	\mathcal{H}_{\infty} := \{ \text{smooth Hermitian metrics on } E \},
\end{equation*}
we thus have a map $FS$ that maps a positive definite Hermitian form $H$ on $H^0 (\mathcal{E}(k))^{\vee}$ to an element in $\mathcal{H}_{\infty}$. Writing $N_k = \dim H^0 ( \mathcal{E}(k))$ and fixing an isomorphism $H^0 (\mathcal{E}(k)) \cong \mathbb{C}^{N_k}$, recall that the homogeneous space $GL(N_k,\mathbb{C}) / U(N_k)$ can be identified with the set $\mathcal{B}_k$ of all positive definitive Hermitian forms on the vector space $H^0 (\mathcal{E}(k))$, by means of the isomorphism $GL(N_k,\mathbb{C}) / U(N_k) \ni \sigma \mapsto \sigma^* \sigma \in \mathcal{B}_k$.

These classical definitions will be important for us later, and are summarised as follows.

\begin{definition} \label{defbergsp}
	The \textbf{Bergman space} $\mathcal{B}_k$ at level $k$ is defined to be the set of all positive definite Hermitian forms on the vector space $H^0 (\mathcal{E}(k))$, which can be identified with the homogeneous space $\mathcal{B}_k = GL(N_k,\mathbb{C}) / U(N_k)$. The \textbf{Fubini--Study map} $FS:\mathcal{B}_k \to \mathcal{H}_{\infty}$
	is defined by the equation (\ref{deffseq}). The image of $\mathcal{B}_k$ by the Fubini--Study map is a subset of $\mathcal{H}_{\infty}$ denoted by $$\mathcal{H}_k := FS(\mathcal{B}_k).$$
\end{definition}

We thus have a natural inclusion $\mathcal{H}_k \subset \mathcal{H}_{\infty}$ for each $k$, but it turns out that the union of $\mathcal{H}_k$'s is dense in $\mathcal{H}_{\infty}$, \emph{i.e.}
\begin{equation} \label{defbergsp2}	
\mathcal{H}_{\infty} = \overline{\cup_{k>>0} \mathcal{H}_k}.
\end{equation}
More precisely, we have that for any $h \in \mathcal{H}_{\infty}$ and $p \in \mathbb{N}$ there exists a sequence $\{ h_{k,p} \}_{k \in \mathbb{N}}$ of Fubini--Study metrics  $ h_{k,p} \in \mathcal{H}_k$ such that $h_{k,p} \to h$ as $k \to + \infty$ in the $C^p$-norm. The result (\ref{defbergsp2}) follows from the so-called Tian--Yau--Zelditch expansion for the Bergman kernel, which is essentially a theorem in analysis. Several proofs have been written especially when $E$ has rank one. For higher ranks, we refer to \cite{Catlin,Wang2}; see also the book \cite{M-M} and references therein. An elementary proof can be found in \cite{BBS2008}.

Finally, recall that the homogeneous space $\mathcal{B}_k = GL(N_k,\mathbb{C}) / U(N_k)$ has a distinguished Riemannian metric called the bi-invariant metric, with respect to which geodesics can be written as $t \mapsto e^{\zeta t}$ for some $ \zeta \in \mathfrak{gl} (N_k , \mathbb{C})$. Recalling the isomorphism $GL(N_k,\mathbb{C}) / U(N_k) \ni \sigma \mapsto \sigma^* \sigma \in \mathcal{B}_k$, we thus have a 1-PS in $\mathcal{B}_k$ defined for each $\zeta \in \mathfrak{gl} (H^0 (\mathcal{E}(k))^{\vee})$ by $\exp( \zeta^* t + \zeta t)$ (note that we may choose the isomorphism $H^0 (\mathcal{E}(k)) \cong \mathbb{C}^{N_k}$ so that the standard basis for $\mathbb{C}^{N_k}$ is $I_{\mathbb{S}}$-orthonormal, and hence $\zeta^*$ is the $I_{\mathbb{S}}$-Hermitian conjugate of $\zeta$). This defines an important 1-PS of Fubini--Study metrics as follows.

\begin{definition} \label{dfbgmopmsg}
	The \textbf{Bergman 1-parameter subgroup} (or Bergman 1-PS) $\{ h_{\sigma_{t}} \}_{t \ge 0} \subset \mathcal{H}_k$ emanating from $h_{\sigma_0} = Q^*Q$ is a path of Fubini--Study metrics defined by
	\begin{equation*}
		h_{\sigma_{t}} := Q^* \sigma_{t}^* \sigma_{t} Q, 
	\end{equation*}
	where $\sigma_{t}:= e^{\zeta t} \in \SL (H^0 (\mathcal{E}(k))^{\vee})$, in the notation of Definition \ref{deffsdual}. When $\zeta$ has rational eigenvalues, we call $\{ h_{\sigma_{t}} \}_{t \ge 0}$ a \textbf{rational} Bergman 1-PS. We say that a Bergman 1-PS $\{ h_{\sigma_{t}} \}_{t \ge 0} \subset \mathcal{H}_k$ is \textit{generated by} $\zeta \in \mathfrak{sl} (H^0 (\mathcal{E}(k))^{\vee})$ or \textit{associated to} the 1-PS  $\sigma : \mathbb{C}^* \to \SL (H^0 (\mathcal{E}(k))^{\vee})$ (in the category of complex Lie groups), where $\sigma_{t} = e^{\zeta t}$.
\end{definition}

The Bergman 1-PS's can be regarded as an analogue of geodesics in $\mathcal{B}_k = GL(N_k,\mathbb{C}) / U(N_k)$, and play an important role later.

\begin{rem}
Throughout in what follows, we shall only consider $\zeta$'s that have rational eigenvalues, and hence any Bergman 1-PS will be meant to be rational (see \emph{e.g.}~Sections \ref{Qlimit} and \ref{RMstab}).	
\end{rem}

We also recall that, in fact, $\mathcal{H}_{\infty}$ can be regarded as an ``infinite dimensional homogeneous space'', with a well-defined notion of geodesics, as explained in \cite[Section 6.2]{Kobook}.

\begin{definition} \label{defgeod}
A path $\{ h_s \}_{s \in \mathbb{R}} \subset \mathcal{H}_{\infty}$ is called a \textbf{geodesic} in $\mathcal{H}_{\infty}$ if it satisfies
\begin{equation} \label{geodcalh}
	\partial_s (h_s^{-1} \partial_s h_s ) =0,
\end{equation}
as an equation in $\mathrm{End}_{C^{\infty}_X}(E)$; an overall constant scaling $h_s := e^{bs}h_0$ for some $b \in \mathbb{R}$ will be called a \textbf{trivial} geodesic.	
\end{definition}
An elementary yet crucially important fact is that $\mathcal{H}_{\infty}$ is \textit{geodesically complete}; for any $h_0, h_1 \in \mathcal{H}_{\infty}$ there exists a geodesic path $\{ h_s \}_{0 \le s \le 1}$ connecting them; we can write it explicitly as $h_s = \exp( s \log h_1 h^{-1}_0) h_0$.

\begin{rem} \label{rembergsubgd}
	Although the Bergman 1-PS $\{ h_{\sigma_{t}} \}_{t \ge 0}$ does not satisfy the geodesic equation (\ref{geodcalh}), we can show \cite[Corollary 4.5]{HK3} that it is in fact a \textit{subgeodesic} in $\mathcal{H}_{\infty}$, \emph{i.e.}~$\partial_{t} (h_{\sigma_{t}}^{-1} \partial_{t} h_{\sigma_{t}} )$ is positive semidefinite as an element in $\mathrm{End}_{C^{\infty}_X}(E)$. While we do not use this result in this paper, it seems interesting to point out an analogy to the case of varieties, in which test configurations define subgeodesics (see \emph{e.g.}~\cite[Section 3]{BHJ2}).
\end{rem}

\subsection{$Q^*$ as a $C^{\infty}$-quotient map} \label{qstcinfqm}
We give here an interpretation of $Q^*$ as a ``$C^{\infty}$-analogue'' of the quotient map $\rho : H^0 (\mathcal{E}(k)) \otimes \mathcal{O}_X (-k) \to \mathcal{E}$ as defined in (\ref{fglgenquot}). While the argument in this section is elementary, it will provide a geometric intuition for the later arguments.

We start with some observations. Suppose that $\mathcal{E}(k)$ is a globally generated holomorphic vector bundle with the quotient map
	\begin{equation} \label{defquotrhol}
		\rho: H^0 (\mathcal{E}(k) ) \otimes \mathcal{O}_X (-k) \longrightarrow \mathcal{E}.
	\end{equation}
	Forgetting the holomorphic structure, we get a map
	\begin{equation} \label{defquotrinf}
		\rho_{C^{\infty}} : H^0 (\mathcal{E}(k) ) \otimes C^{\infty}_X (-k) \longrightarrow E.
	\end{equation}
Given the above definitions, the following is immediate.
\begin{lem} \label{cdiamact}
	Writing $V := H^0 ( \mathcal{E}(k))$, we have the following commutative diagram:
	\begin{displaymath}
		\xymatrix{ \SL(V) \times V \otimes C^{\infty}_X (-k) \ar@{->}[r]^-{a} & V \otimes C^{\infty}_X (-k) \ar@{->}[r]^-{\rho_{C^{\infty}}} & E \\
		\SL(V) \times V \otimes \mathcal{O}_{X} (-k) \ar@{->}[r]_-{a} \ar@{->}[u]^{\textup{forget}} & V \otimes \mathcal{O}_{X } (-k) \ar@{->}[r]_-{\rho} \ar@{->}[u]^{\textup{forget}} & \mathcal{E}  \ar@{->}[u]^{\textup{forget}}
		}
	\end{displaymath}
	where $a : \SL(V) \times V \to V$ is the linear action on $V$, and the vertical arrows are the forgetful maps forgetting the holomorphic structures.
\end{lem}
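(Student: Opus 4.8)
The plan is to verify the commutativity of the two squares separately, both of which reduce to carefully unwinding the definitions given in the preceding paragraphs. Since the text already flags the result as ``immediate,'' I expect the entire argument to amount to tracking how the $SL(V_k)$-action and the forgetful maps operate on the two tensor factors, and to confirming that they act on ``disjoint'' parts of the tensor product.

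First I would dispatch the right-hand square. By its very definition in (\ref{defquotrinf}), the map $\rho_{C^{\infty}}$ is obtained from the holomorphic quotient map $\rho$ of (\ref{defquotrhol}) by forgetting the holomorphic structures on both $V_k \otimes \mathcal{O}_X(-k)$ and $\mathcal{E}$. In other words, $\rho_{C^{\infty}}$ is \emph{constructed} precisely so that the right square commutes: a local holomorphic section $v \otimes f$ of $V_k \otimes \mathcal{O}_X(-k)$ is sent by $\rho$ to $\rho(v \otimes f) \in \mathcal{E}$, and passing to the $C^{\infty}$-category along either route yields the same smooth section of $E$. Hence this square commutes tautologically.

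Next I would treat the left-hand square. The key observation is that the linear action $a : SL(V_k) \times V_k \to V_k$ acts only on the first tensor factor $V_k$, acting as the identity on the factor $\mathcal{O}_X(-k)$ (resp.\ $C^{\infty}_X(-k)$); concretely, $a(\sigma, v \otimes f) = (\sigma \cdot v) \otimes f$ for $\sigma \in SL(V_k)$, $v \in V_k$, and $f$ a local section. The forgetful map, in turn, is the identity on $V_k$ and merely reinterprets a holomorphic section $f$ of $\mathcal{O}_X(-k)$ as a smooth section of $C^{\infty}_X(-k)$. Since $a$ touches only $V_k$ while the forgetful map touches only the second factor, the two operations commute, as both composites send $(\sigma, v \otimes f)$ to the smooth section $(\sigma \cdot v) \otimes f$. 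Pasting the two commuting squares then yields the commutativity of the full diagram.

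I do not anticipate a genuine obstacle, as both squares commute by construction. The only point requiring any care is to record explicitly that the $SL(V_k)$-action is compatible with the tensor decomposition, i.e.\ that it leaves the holomorphic (resp.\ smooth) structure of the second factor untouched; once this compatibility is noted, the statement follows formally.
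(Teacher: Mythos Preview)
Your proposal is correct and matches the paper's approach: the paper simply declares the lemma ``immediate'' from the preceding definitions and gives no further argument, and your verification by unwinding how $a$ and the forgetful maps act on the separate tensor factors is exactly the routine check that justifies this.
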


Choosing Hermitian metrics for $H^0 (\mathcal{E}(k) )$ and $E(k)$, we can take the ``metric dual'' of (\ref{defquotrinf}). More precisely, recalling $I_{\mathbb{S}}$ and $h_{\mathrm{ref}}$ that appeared in the previous section, we have the following $C^{\infty}$-maps
\[
	I_{\mathbb{S}} : H^0 (\mathcal{E}(k) ) \stackrel{\sim}{\longrightarrow} \overline{H^0 (\mathcal{E}(k) )^{\vee}},\quad
	h_{\mathrm{ref}} : E \stackrel{\sim}{\longrightarrow} \overline{E^{\vee}},\quad\text{and}\quad  h_L : C^{\infty}_X (-k) \stackrel{\sim}{\longrightarrow} \overline{C^{\infty}_X (k)},
\]
by means of the metric dual. Composing with these maps, we get
\begin{equation} \label{metdualrhoift}
	\rho^*_{C^{\infty}} : \overline{ H^0 (\mathcal{E} (k) )^{\vee}} \otimes \overline{C^{\infty}_X (k)} \longrightarrow \overline{E^{\vee}},
\end{equation}
which looks similar to the definition of $Q^*$ that we had before.

We now state the relationship between $\rho^*_{C^{\infty}}$ and $Q^*$.

\begin{lem} \label{cdiamq}
	Suppose that $\mathcal{E}(k)$ is a globally generated holomorphic vector bundle with the quotient map $\rho$. Fix a Hermitian metric $I_{\mathbb{S}}$ for $H^0(\mathcal{E}(k))$, $h_{\mathrm{ref}}$ for $E$, and $h_L$ for $L$, so that we have $Q = Q_{\mathbb{S} , \mathrm{ref}}$ as in Lemma \ref{lemfsvbs}. Then there exists a $C^{\infty}$-gauge transform of $\overline{E^{\vee}}$, written $f_{\mathbb{S} , \mathrm{ref}} \in \mathrm{End}_{C^{\infty}_X} (\overline{E^{\vee}})$, which is fibrewise invertible, such that
	\begin{equation} \label{lemeqqstqtrho}
		Q^* = f_{\mathbb{S} ,  \mathrm{ref}} \circ \rho^*_{C^{\infty}}.
	\end{equation}
\end{lem}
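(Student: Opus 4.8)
The plan is to exhibit both $Q^*$ and $\rho^*_{C^\infty}$ as \emph{fibrewise surjective} $C^\infty$-bundle maps onto $\overline{E^\vee}$ built from the same evaluation data, and to show that they share the same fibrewise kernel; a fibrewise-invertible endomorphism $f_{\mathbb{S}_k,\mathrm{ref}}$ of the target then exists automatically. First I would record that both maps are fibrewise surjective. Since $\mathcal{E}$ is $k$-regular, $\mathcal{E}(k)$ is globally generated, so the evaluation $\rho$ (hence $\rho_{C^\infty}$) is fibrewise surjective; as $\rho^*_{C^\infty} = h_{\mathrm{ref}}\circ\rho_{C^\infty}\circ(I_{\mathbb{S}_k}\otimes h_L)^{-1}$ is the composition of $\rho_{C^\infty}$ with the fibrewise isomorphisms $I_{\mathbb{S}_k}$, $h_{\mathrm{ref}}$, $h_L$, it is fibrewise surjective too. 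For $Q^*$, the identity $h = Q^*\circ I_{\mathbb{S}_k}\circ Q$ from Lemma \ref{lemfsvbs} expresses the fibrewise-invertible metric $h$ as a composition ending in $Q^*$, which forces $Q^*$ to be fibrewise surjective as well.

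Next I would compute both maps in a local frame. Using Lemma \ref{lemfsvbs}, the matrix of $Q^*$ is read off from $s_i (x) = \sum_{\alpha} Q_{\mathbb{S}_k,\mathrm{ref}}(x)_{i\alpha} e_\alpha(x)$ together with the metric-dual relation $\sum_i Q^*(x)_{\alpha i} s_i^*(x) = e_\alpha^*$, while the matrix of $\rho^*_{C^\infty}$ follows directly from (\ref{metdualrhoift}) and the \emph{same} evaluated sections $s_i(x)$. Both matrices are therefore governed by the single collection $\{ s_i(x) \}$, so their fibrewise kernels both coincide with the metric-dual image of the subspace $\{ v \in H^0(\mathcal{E}(k)) : \rho(v)(x) = 0 \}$, giving $\ker Q^*(x) = \ker \rho^*_{C^\infty}(x)$ for every $x$. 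Granting this, at each $x$ both maps factor through the common quotient by this kernel and induce isomorphisms onto $\overline{E^\vee_x}$, and one obtains $f_{\mathbb{S}_k,\mathrm{ref}}(x)$ as one such isomorphism followed by the inverse of the other, so that $Q^* = f_{\mathbb{S}_k,\mathrm{ref}}\circ\rho^*_{C^\infty}$.

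To see that $f_{\mathbb{S}_k,\mathrm{ref}}$ is genuinely $C^\infty$ and fibrewise invertible (rather than merely defined pointwise), I would give it by an explicit right-inverse formula. Choosing a smooth background fibre metric on the source and $h_{\mathrm{ref}}$ on the target, let $\dagger$ denote the associated fibrewise adjoint and set $f_{\mathbb{S}_k,\mathrm{ref}} = Q^*\circ(\rho^*_{C^\infty})^{\dagger}\circ\bigl(\rho^*_{C^\infty}\circ(\rho^*_{C^\infty})^{\dagger}\bigr)^{-1}$; the middle inverse exists precisely because $\rho^*_{C^\infty}$ is fibrewise surjective, and the whole expression is smooth in $x$. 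Since $(\rho^*_{C^\infty})^{\dagger}\circ(\rho^*_{C^\infty}\circ(\rho^*_{C^\infty})^{\dagger})^{-1}\circ\rho^*_{C^\infty}$ is the fibrewise orthogonal projection onto $(\ker\rho^*_{C^\infty})^\perp$, the equality of kernels yields $f_{\mathbb{S}_k,\mathrm{ref}}\circ\rho^*_{C^\infty} = Q^*$, and the reverse inclusion of kernels together with surjectivity gives invertibility. Unwinding this in the local frame identifies $f_{\mathbb{S}_k,\mathrm{ref}}$ with a ratio of the two metrics entering the two constructions, a Bergman-kernel-type endomorphism in the spirit of Theorem \ref{thmbergexp}, which is manifestly fibrewise invertible.

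The main obstacle is the careful bookkeeping of the metric duals and complex conjugations when comparing $Q^*$ with $\rho^*_{C^\infty}$: one must verify that the two constructions produce maps with \emph{exactly} the same kernel, and not merely with proportional images, so that the projection argument above applies and $f_{\mathbb{S}_k,\mathrm{ref}}$ is a genuine fibrewise isomorphism. Once the kernels are matched, the existence, smoothness, and fibrewise invertibility of $f_{\mathbb{S}_k,\mathrm{ref}}$ are formal. Everything here is elementary linear algebra carried out fibrewise and uses no PDE input, only the global generation supplied by $k$-regularity.
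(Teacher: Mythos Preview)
Your argument is correct and follows essentially the same fibrewise linear-algebra approach as the paper, which simply observes that both $Q^*$ and $\rho^*_{C^\infty}$ are locally represented by smooth full-rank $r\times N_k$ matrices and declares the existence of $f_{\mathbb{S}_k,\mathrm{ref}}$ ``immediate''. You are in fact more careful than the paper: two fibrewise surjections $A,B$ onto the same bundle need not satisfy $A=fB$ for invertible $f$ unless $\ker A=\ker B$, and you supply exactly this missing verification (both kernels equal the metric dual of the vanishing subspace at $x$) together with an explicit smooth pseudoinverse construction, whereas the paper's one-line proof leaves this implicit.
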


\begin{proof}
	Since both $Q^* $ and $\rho^*_{C^{\infty}}$ can be written locally as an $N_k \times r$ matrix of fibrewise full-rank which depends smoothly on $x$ (\emph{cf.}~Lemma \ref{lemfsvbs}), it is immediate that there exists a fibrewise invertible smooth linear map $f_{\mathbb{S} , \mathrm{ref}} : \overline{E^{\vee}} \longrightarrow \overline{E^{\vee}}$ so that (\ref{lemeqqstqtrho}) holds.
\end{proof}

\begin{rem} \label{remfsfbwgs}
	$f_{\mathbb{S} , \mathrm{ref}}$ above should be regarded as performing a ``fibrewise Gram--Schmidt process'' with respect to $I_{\mathbb{S}}$ and $h_{\mathrm{ref}}$, so that $\{ s^*_1 , \dots , s^*_{N_k} \}$ are mapped to the frame $\{ e^*_1 (x) , \dots , e^*_r (x) \}$ as in Lemma \ref{lemfsvbs}.
	\end{rem}

While the above lemma looks rather trivial, we observe the following consequence that can be obtained from it combined with Lemma \ref{cdiamact}; when we consider the Quot-scheme limit $\hat{\mathcal{E}}$ of $\mathcal{E}$ generated by a $\mathbb{C}^*$-action given by $\zeta \in \mathfrak{sl}( H^0 (\mathcal{E} (k))^{\vee})$, the limit of the Bergman 1-PS $Q^*e^{\zeta^* t} e^{\zeta t} Q$ as $t \to \infty$ must be in a certain sense ``compatible'' with this Quot-scheme limit. Indeed, this will be the main topic to be discussed in the next section.

\subsection{Donaldson's functional and the Hermitian--Einstein equation}\label{defMDon}
As before, $(X,L)$ stands for a polarised smooth complex projective variety and $\mathcal{E}$ a holomorphic vector bundle of rank $r$. We write $\Vol_L := \int_X c_1(L)^n/n!$. The following functional plays a central role in our paper.

\begin{definition}
 Given two Hermitian metrics $h_0$ and $h_1$ on $E$, \textbf{the Donaldson functional} ${\mathcal M}^{\mathrm{Don}} : \mathcal{H}_{\infty}\times \mathcal{H}_{\infty} \to \mathbb{R}$ is defined as $${\mathcal M}^{\mathrm{Don}}(h_1,h_0) := {\mathcal M}_1^{\mathrm{Don}}(h_1,h_0) - \mu(E){\mathcal M}_2^{\mathrm{Don}}(h_1,h_0),$$
 where $${\mathcal M}_1^{\mathrm{Don}}(h_1,h_0) :=\int_0^1 \int_X \tr \left( h_t^{-1} \partial_t h_t \cdot F_t\right) \frac{\omega^{n-1}}{(n-1)!} dt$$ and $${\mathcal M}_2^{\mathrm{Don}}(h_1,h_0) :=\frac{1}{\mathrm{Vol}_L}\int_X \log\det(h_0^{-1}h_1)\frac{\omega^n}{n!}.$$
 
 In the above, $\{ h_t \}_{0 \le t \le 1} \subset \mathcal{H}_{\infty}$ is a smooth path of Hermitian metrics between $h_0$ to $h_1$ and $F_t$ denotes ($\ai / 2 \pi$) times the Chern curvature of $h_t$, with respect to the fixed holomorphic structure of $E$ (such that the sheaf of germs of holomorphic sections of $E$ is identified with $\mathcal{E}$).
\end{definition}

\begin{rem} \label{remordargdfn}
	Throughout in what follows, we shall fix the second argument of ${\mathcal M}^{\mathrm{Don}}$ as a reference metric. Thus ${\mathcal M}^{\mathrm{Don}}(h,h_0)$ is regarded as a function of $h$ with a fixed reference $h_0$.
\end{rem}

We recall some well-known properties of this functional as established in \cite{Don1985}; the reader is also referred to \cite[Section 6.3]{Kobook} for more details. First of all it is well-defined, \emph{i.e.}~does not depend on the path $\{ h_t \}_{0 \le t \le 1}$ chosen to connect $h_0$ and $h_1$ (\emph{cf.}~\cite[Lemma 6.3.6]{Kobook}). This easily implies the following \textbf{cocycle property}
\begin{equation} \label{cocyclemdon}
	\mathcal{M}^{\mathrm{Don}} (h_2,h_0) = \mathcal{M}^{\mathrm{Don}} (h_2,h_1)+ \mathcal{M}^{\mathrm{Don}} (h_1,h_0) ,
\end{equation}
for any $h_0 , h_1 , h_2 \in \mathcal{H}_{\infty}$.

\begin{rem}
	Recalling $\mathcal{M}^{\mathrm{Don}} (e^{c} h,h)=0$ for any constant $c \in \mathbb{R}$ \cite[Lemma 6.3.23]{Kobook}, (\ref{cocyclemdon}) implies
	\begin{equation*}
		{\mathcal M}^{\mathrm{Don}}(e^{c} h,h_1) = {\mathcal M}^{\mathrm{Don}}(e^{c}h,h)+ {\mathcal M}^{\mathrm{Don}}(h,h_1) = {\mathcal M}^{\mathrm{Don}}(h,h_1)
	\end{equation*}
	for any $c \in \mathbb{R}$, \emph{i.e.}~$\mathcal{M}^{\mathrm{Don}}(-,h_1)$ is invariant under an overall constant scaling.
\end{rem}

Second, the Euler--Lagrange equation for $\mathcal{M}^{\mathrm{Don}}(-,h_{\mathrm{ref}})$ is the \textit{Hermitian--Einstein equation}. More precisely,  a critical point of $\mathcal{M}^{\mathrm{Don}}(-,h_{\mathrm{ref}})$ is the following object.

\begin{definition}
	A Hermitian metric $h \in \mathcal{H}_{\infty}$ is called a \textbf{Hermitian--Einstein metric} if it satisfies $$\Lambda_\omega F_h = \frac{\mu(E)}{\Vol_L} \Id_E,$$
where $\Lambda_\omega$ is the contraction with respect to the K\"ahler metric $\omega$ on the manifold (\emph{i.e.}~the dual Lefschetz operator).
\end{definition}

Critical points of $\mathcal{M}^{\mathrm{Don}}$ being Hermitian--Einstein metrics is a consequence of the following lemma.

\begin{lem} \textup{(\emph{cf.}~\cite[Section 1.2]{Don1985})} \label{derivmdon}
	Fixing a reference metric $h_{\mathrm{ref}} \in \mathcal{H}_{\infty}$, we have
	\begin{align*}
		\frac{d}{dt} \mathcal{M}^{\mathrm{Don}}_1 (h_t , h_{\mathrm{ref}}) &= \int_X \tr \left( h_t^{-1} \partial_t h_t \cdot F_t \right) \frac{\omega^{n-1}}{(n-1)!}, \\
		\frac{d}{dt} \mathcal{M}^{\mathrm{Don}}_2 (h_t , h_{\mathrm{ref}}) &= \frac{1}{\mathrm{Vol}_L}\int_X \tr( h^{-1}_t \partial_t h_t ) \frac{\omega^n}{n!},
	\end{align*}
	for a path $\{ h_t \}_{0 \le t \le 1} \subset \mathcal{H}_{\infty}$ of smooth metrics with $h_0 =h_{\mathrm{ref}}$, and hence
	\begin{equation*}
		\frac{d}{dt} \mathcal{M}^{\mathrm{Don}} (h_t , h_{\mathrm{ref}}) = \int_X \tr \left( h_t^{-1} \partial_t h_t \left( \Lambda_{\omega} F_t - \frac{\mu(E)}{\mathrm{Vol}_L} \mathrm{Id}_E \right) \right) \frac{\omega^{n}}{n!}.
	\end{equation*}
\end{lem}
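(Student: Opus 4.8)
The plan is to prove the three displayed identities in turn, handling $\mathcal{M}_1^{Don}$ and $\mathcal{M}_2^{Don}$ separately and then assembling the combined formula. The one preliminary point worth isolating is that $\mathcal{M}_1^{Don}(-,h_{\mathrm{ref}})$ is itself independent of the path used to define it. Indeed, $\mathcal{M}_2^{Don}$ is manifestly path-independent, being given directly by $\log\det(h_0^{-1}h_1)$ with no reference to any connecting path; and $\mathcal{M}^{Don}=\mathcal{M}_1^{Don}-\mu(E)\mathcal{M}_2^{Don}$ is path-independent by the well-definedness recalled above (cf.~\cite[Lemma 6.3.6]{Kobook}). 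Hence $\mathcal{M}_1^{Don}=\mathcal{M}^{Don}+\mu(E)\mathcal{M}_2^{Don}$ is path-independent as well.

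With this in hand, the first formula is immediate. I would evaluate $\mathcal{M}_1^{Don}(h_t,h_{\mathrm{ref}})$ using the given path $\{h_s\}_{0\le s\le t}$ itself as the connecting path, which is legitimate by the path-independence just established, so that
\[
\mathcal{M}_1^{Don}(h_t,h_{\mathrm{ref}})=\int_0^t\int_X \tr\!\left(h_s^{-1}\partial_s h_s\cdot F_s\right)\frac{\omega^{n-1}}{(n-1)!}\,ds,
\]
and the claimed derivative then follows from the fundamental theorem of calculus. For the second formula I would differentiate under the integral sign and apply the elementary identity $\tfrac{d}{dt}\log\det A_t=\tr(A_t^{-1}\partial_t A_t)$ to $A_t:=h_{\mathrm{ref}}^{-1}h_t$; since $A_t^{-1}\partial_t A_t=h_t^{-1}h_{\mathrm{ref}}\cdot h_{\mathrm{ref}}^{-1}\partial_t h_t=h_t^{-1}\partial_t h_t$, this gives $\tfrac{d}{dt}\log\det(h_{\mathrm{ref}}^{-1}h_t)=\tr(h_t^{-1}\partial_t h_t)$, whence the stated expression for $\tfrac{d}{dt}\mathcal{M}_2^{Don}$.

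To obtain the combined formula I would subtract the two derivatives and rewrite the curvature term using the standard Kähler identity $\alpha\wedge\frac{\omega^{n-1}}{(n-1)!}=(\Lambda_\omega\alpha)\frac{\omega^n}{n!}$, valid for any $(1,1)$-form $\alpha$. Applying this to $\alpha=\tr(h_t^{-1}\partial_t h_t\cdot F_t)$, which is of type $(1,1)$ because $F_t$ is the Chern curvature, and noting that the contraction $\Lambda_\omega$ (acting on the form part) commutes with the endomorphism-trace, so that $\Lambda_\omega\tr(h_t^{-1}\partial_t h_t\cdot F_t)=\tr(h_t^{-1}\partial_t h_t\cdot\Lambda_\omega F_t)$, brings both contributions under the common volume form $\frac{\omega^n}{n!}$ and produces the factor $\Lambda_\omega F_t-\frac{\mu(E)}{\mathrm{Vol}_L}\mathrm{Id}_E$, as required.

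I expect no serious obstacle here: the computation is essentially formal once the path-independence of $\mathcal{M}_1^{Don}$ is secured. The only points demanding care are the bookkeeping in the final step — checking that $\Lambda_\omega$ genuinely commutes with the endomorphism-trace and that the Kähler identity applies to the correctly-typed $(1,1)$-form — and, for full rigour in the first two formulas, justifying differentiation under the integral, which is unproblematic since the path $\{h_t\}$ and the curvatures $F_t$ depend smoothly on $t$ and $X$ is compact.
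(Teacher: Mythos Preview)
The paper does not supply its own proof of this lemma; it is stated with a citation to \cite{Don1985} (and the path-independence is attributed to \cite[Lemma 6.3.6]{Kobook}), and the exposition moves directly on. Your argument is correct and is the standard one: the reduction of the first formula to the fundamental theorem of calculus via path-independence of $\mathcal{M}_1^{Don}$, the log-determinant identity for the second, and the Lefschetz identity $\alpha\wedge\tfrac{\omega^{n-1}}{(n-1)!}=(\Lambda_\omega\alpha)\tfrac{\omega^n}{n!}$ for $(1,1)$-forms to merge the two terms are exactly what is needed, and the minor care points you flag (commutation of $\Lambda_\omega$ with the bundle trace, smooth dependence on $t$ over compact $X$) are all unproblematic.
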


The final important point is that $\mathcal{M}^{\mathrm{Don}}$ is convex along geodesics in $\mathcal{H}_{\infty}$, as defined in (\ref{geodcalh}). Recalling the important fact that $\mathcal{H}_{\infty}$ is geodesically complete, these properties together imply that a critical point of $\mathcal{M}^{\mathrm{Don}}$ is necessarily the global minimum. We summarise the above in the following proposition.

\begin{prop} \textup{(\emph{cf.}~\cite[Section 6.3]{Kobook})} \label{lemmdonconvH}
 The functional $\mathcal{M}^{\mathrm{Don}}$ is convex along geodesics in $\mathcal{H}_{\infty}$, and its critical point $($if it exists$\, )$ attains the global minimum. Moreover, $\mathcal{M}^{\mathrm{Don}}$ is strictly convex along nontrivial geodesics if $E$ is irreducible $($in particular if $E$ is simple$\, )$ and in this case the critical point $($if it exists$\, )$ is unique up to an overall constant scaling.
\end{prop}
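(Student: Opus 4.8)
The plan is to establish the three assertions---convexity, global minimality of critical points, and strict convexity with uniqueness in the irreducible case---by computing the first two derivatives of $s \mapsto \mathcal{M}^{Don}(h_s,h_{\mathrm{ref}})$ along a geodesic and identifying the second derivative with a manifestly nonnegative $L^2$-norm. First I would fix a geodesic $\{h_s\}$ and set $\phi_s := h_s^{-1}\partial_s h_s \in \End_{C^{\infty}_X}(E)$. The geodesic equation (\ref{geodcalh}) says precisely that $\phi_s = \phi$ is independent of $s$, and since $h_s$ and $\partial_s h_s$ are Hermitian, $\phi$ is self-adjoint with respect to each $h_s$. Differentiating the first-variation formula of Lemma \ref{derivmdon} once more in $s$, the term involving $\partial_s \phi_s$ vanishes by the geodesic equation, leaving $\frac{d^2}{ds^2}\mathcal{M}^{Don}(h_s,h_{\mathrm{ref}}) = \int_X \tr(\phi \cdot \Lambda_\omega \partial_s F_s)\frac{\omega^n}{n!}$.

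The core computation is the variation of the Chern curvature: writing $F_s = \bar\partial(h_s^{-1}\partial h_s)$ and using $\partial_s h_s = h_s \phi$, one finds $\partial_s(h_s^{-1}\partial h_s) = \partial\phi + [h_s^{-1}\partial h_s, \phi] = \partial_{h_s}\phi$, hence $\partial_s F_s = \bar\partial\partial_{h_s}\phi$, where $\partial_{h_s}$ is the $(1,0)$-part of the Chern connection on $\End(E)$. Converting $\Lambda_\omega$ into a wedge with $\omega^{n-1}/(n-1)!$ and integrating by parts (using that $\omega$ is closed, so the Stokes boundary term drops because $\bar\partial\eta = 0$ for $\eta = \omega^{n-1}/(n-1)!$), together with the fact that $\partial_{h_s}\phi$ is the $h_s$-adjoint of $\bar\partial\phi$ for self-adjoint $\phi$, I expect to obtain
$$\frac{d^2}{ds^2}\mathcal{M}^{Don}(h_s,h_{\mathrm{ref}}) = \Vert \bar\partial\phi \Vert^2_{L^2(h_s,\omega)} \geq 0,$$
where $\bar\partial$ is the Dolbeault operator of the holomorphic bundle $\End(E)$. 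This proves convexity. For global minimality, if $h_*$ is a critical point it is Hermitian--Einstein by Lemma \ref{derivmdon}; connecting it to an arbitrary $h_1$ by a geodesic (geodesic completeness of $\mathcal{H}_\infty$), the convex function $f(s) := \mathcal{M}^{Don}(h_s,h_{\mathrm{ref}})$ satisfies $f'(0) = 0$, since the first-variation integrand vanishes at the Hermitian--Einstein metric $h_*$; convexity then gives $f(1) \geq f(0)$, and the independence of the derivative from $h_{\mathrm{ref}}$ follows from the cocycle property (\ref{cocyclemdon}).

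For strict convexity along nontrivial geodesics when $E$ is irreducible, observe that the second derivative vanishes exactly when $\bar\partial\phi \equiv 0$, i.e.\ when $\phi \in \End_{\mathcal{O}_X}(\mathcal{E})$ is holomorphic. The key step is to show that a \emph{self-adjoint holomorphic} endomorphism of an irreducible $E$ is a real multiple of the identity: its characteristic polynomial has holomorphic, hence (as $X$ is compact and connected) constant coefficients, so its eigenvalues are constant; being fibrewise self-adjoint it is diagonalisable, and the spectral projectors---polynomials in $\phi$ with constant coefficients---are holomorphic, yielding a holomorphic splitting $\mathcal{E} = \bigoplus_i \mathcal{E}_i$ into eigenbundles. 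Irreducibility forces a single eigenvalue, so $\phi = c\,\Id_E$ with $c \in \mathbb{R}$, and $\phi = h_s^{-1}\partial_s h_s = c\,\Id_E$ means $h_s = e^{cs}h_0$, a trivial geodesic. Hence along any nontrivial geodesic $\bar\partial\phi \not\equiv 0$ and $f'' > 0$ (when $E$ is simple this is immediate from $\End_{\mathcal{O}_X}(\mathcal{E}) = \mathbb{C}$). Uniqueness up to scaling then follows: joining two critical points $h_*, h_{**}$ by a geodesic, both endpoints give $f'(0) = f'(1) = 0$, but a strictly convex $f$ has strictly increasing derivative, so the geodesic must be trivial, i.e.\ $h_{**} = e^c h_*$.

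I would flag the second-variation computation as the main obstacle: obtaining the curvature-variation identity $\partial_s F_s = \bar\partial\partial_{h_s}\phi$ and performing the integration by parts with the correct sign, so that the result is $+\Vert\bar\partial\phi\Vert^2$ rather than its negative, requires care with the K\"ahler identities and curvature conventions. The remaining algebraic facts---the geodesic equation forcing $\phi$ to be $s$-independent, self-adjointness, and the spectral decomposition of a self-adjoint holomorphic endomorphism---are routine, so the entire analytic content sits in this single identity.
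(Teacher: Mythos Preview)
Your proposal is correct and follows precisely the standard argument in Kobayashi's book \cite[Section 6.3]{Kobook}, which is the reference the paper cites in lieu of giving its own proof. The second-variation identity $\frac{d^2}{ds^2}\mathcal{M}^{Don}(h_s,h_{\mathrm{ref}}) = \Vert \bar\partial\phi \Vert^2_{L^2}$ and the spectral-projector argument for irreducible $E$ are exactly the ingredients appearing there, so there is nothing to compare.
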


\section{Quot-scheme limit of Fubini--Study metrics}\label{Qlimit}

\subsection{Filtrations and metric duals}

A central role in this paper is played by 1-parameter subgroups in $\SL(H^0(\mathcal{E}(k))^{\vee})$ and the weight decomposition of $H^0(\mathcal{E}(k))$ associated to them. This is of course related to the Bergman 1-PS's in Definition \ref{dfbgmopmsg}, and for the compatibility with the differential-geometric situation therein we need to assume that 1-parameter subgroups that we consider are the complexifications of the fixed unitary action; more precisely, we fix a Hermitian form $I_{\mathbb{S}}$ for $H^0(\mathcal{E}(k))$ which fixes $Q^*$ as in Section \ref{fsmetvb}, or alternatively the unitary equivalence class of its orthonormal basis $\mathbb{S}$, and decree that the unitary group $U(N_k)$ acting on $H^0(\mathcal{E}(k))^{\vee}$ is always with respect to $I_{\mathbb{S}}$.

It is well known that the set of all 1-parameter subgroups in a torus (in the category of algebraic groups) forms a lattice, and we can take its tensor product over $\mathbb{Q}$. We cannot naively repeat this operation for $\SL(H^0(\mathcal{E}(k))^{\vee})$ as it is nonabelian, but the following is still well defined and plays a very important role in the rest of the paper.
\begin{definition} \label{defcochq}
	We write $\mathcal{X}_{\mathbb{Q}} (k)$ (or simply $\mathcal{X}_{\mathbb{Q}}$) for the set of 1-parameter subgroups $\sigma :  \mathbb{C}^* \to \SL(H^0(\mathcal{E}(k))^{\vee})$ in the category of complex Lie groups with the following property: for each $\sigma \in \mathcal{X}_{\mathbb{Q}}(k)$ there exists an integer $m \ge1$ such that $\sigma^m :  \mathbb{C}^* \to \SL(H^0(\mathcal{E}(k))^{\vee})$ is a 1-parameter subgroup in the category of algebraic groups and that $\sigma^m$ is the complexification of a unitary 1-parameter subgroup $U(1) \to U(N_k)$ where the unitarity is defined with respect to $I_{\mathbb{S}}$ on $H^0(\mathcal{E}(k))$.
\end{definition}

In a more down-to-earth terminology, $\mathcal{X}_{\mathbb{Q}}$ can be defined as matrices as follows. An element $\sigma \in \mathcal{X}_{\mathbb{Q}}$ corresponds one-to-one with a Hermitian element $\zeta \in \mathfrak{sl} (H^0(\mathcal{E}(k))^{\vee})$ with rational eigenvalues by $\sigma_{t} := e^{\zeta t}$; it is always understood that $\zeta$ is Hermitian with respect to $I_{\mathbb{S}}$. We may change $\mathbb{S}$ (without affecting $Q^*Q$) to a unitarily equivalent basis $\tau \cdot \mathbb{S}$ ($\tau \in U(N_k)$), say, so that it is compatible with the weight space decomposition

 \begin{equation} \label{wtdcpzeta}
 	H^0 ( \mathcal{E}(k))^{\vee} = \bigoplus_{i = 1}^{\nu} V^{\vee}_{w_i}
 \end{equation}
defined by $\sigma \in \mathcal{X}_{\mathbb{Q}}(k)$, where its generator $\zeta \in \mathfrak{sl} (H^0(\mathcal{E}(k))^{\vee})$ acts on $V^{\vee}_{w_i}$ with weight $w_i \in \mathbb{Q}$ for all $i = 1 , \dots , \nu$. Note that $\nu$ depends on $\zeta$. Henceforth we shall assume the ordering
\begin{equation} \label{ordlambda}
	w_1 > \cdots > w_{\nu} .
\end{equation}

\
\begin{rem} \label{condopnm}
	We shall assume without loss of generality that the operator norm (\emph{i.e.}~the modulus of the maximum eigenvalue) of $\zeta$ is at most 1; this can be achieved by an overall constant multiple of $\zeta$, which just results in a different scaling of the parameter $t$.
	
	Note also that a Bergman 1-PS $\{ h_{\sigma_{t}} \}_{t \ge 0}$ is rational in the sense of Definition \ref{dfbgmopmsg} if and only if $\sigma \in \mathcal{X}_{\mathbb{Q}}$.
\end{rem}

We shall introduce an auxiliary variable 
\begin{equation*}
	T := e^{-t},
\end{equation*}
so that the limit $t \to + \infty$ corresponds to $T \to 0$, which makes it easier for us to compare the limit $t \to + \infty$ to the description of the Quot-scheme limit as a central fibre, as presented in Lemma \ref{llimquotsch}. In this description, for each $\sigma \in \mathcal{X}_{\mathbb{Q}}$ we may write
\begin{equation} \label{chvsgttz}
	\sigma_t = T^{- \zeta},
\end{equation}
for some $\zeta \in \mathfrak{sl} (H^0(\mathcal{E}(k))^{\vee})$ and the weight space decomposition (\ref{wtdcpzeta}) defines a filtration
\begin{equation} \label{wtdcpfilt}
V^{\vee}_{ \le - w_i } := \bigoplus_{j= 1}^{i} V^{\vee}_{w_j},
\end{equation}
where we note that the minus sign appears because (\ref{chvsgttz}) gives the ordering of the weights
\begin{equation} \label{ordmlambda}
-w_1 < \dots < - w_{\nu}
\end{equation}
with respect to $T$.

We now consider the filtration that $\zeta \in \mathfrak{sl} (H^0(\mathcal{E}(k))^{\vee})$ induces on $H^0(\mathcal{E}(k))$. It turns out that the natural dual action is not the one relevant for our purpose. Recall that Lemma \ref{cdiamq} implies that we have the following diagram
\begin{equation} \label{cdiamqd}
		\xymatrix{ \overline{H^0(\mathcal{E}(k))^{\vee}} \otimes \overline{C^{\infty}_X (k)} \ar@{->}[r]^-{Q^*} \ar@{->}[d]_{*} & \overline{E^{\vee}} \ar@{->}[d]^{*, \ \textup{gauge}} \\
		 H^0(\mathcal{E}(k)) \otimes \mathcal{O}_{X } (-k) \ar@{->}[r]_-{\rho}  & \mathcal{E}
		}
\end{equation}
where the left vertical arrow stands for the metric duality isomorphism given by $I_{\mathbb{S}}$ and $h_L$, and the right vertical arrow for the one given by $h_{\mathrm{ref}}$ and the $C^{\infty}$-gauge transformation (written $f_{\mathbb{S}, \mathrm{ref}}$ in Lemma \ref{cdiamq}). The fact that the right vertical arrow involves taking the dual of the bundle $\mathcal{E}$ will have nontrivial consequences concerning the sign (\emph{cf.}~Remark \ref{remmdsign}).

We decree that the $I_{\mathbb{S}}$-metric dual gives a weight-preserving isomorphism between $H^0(\mathcal{E}(k))$ and $\overline{H^0(\mathcal{E}(k))^{\vee}}$, and hence defines a weight decomposition
\begin{equation*}
	H^0 ( \mathcal{E}(k)) = \bigoplus_{i = 1}^{\nu} V_{w_i }
\end{equation*}
in which $\zeta \in \mathfrak{sl} (H^0(\mathcal{E}(k))^{\vee})$ acts with weight $w_i$ on $V_{w_i }$. We thus get a filtration
\begin{equation*}
	V_{ \le - w_i } := \bigoplus_{j= 1}^{i} V_{w_j},
\end{equation*}
as in (\ref{wtdcpfilt}), which defines a filtration
\begin{equation} \label{pfiltevs}
		0 \neq \mathcal{E}_{\le - w_{1}} \subset \cdots \subset \mathcal{E}_{\le - w_{\nu}} = \mathcal{E}
\end{equation}
of $\mathcal{E}$ by subsheaves of $\mathcal{E}$, where $\mathcal{E}_{\le - w_{i}}$ is defined by the quotient map
\begin{equation*}
	\rho : V_{\le - w_i } \otimes \mathcal{O}_X(-k) \longrightarrow \mathcal{E}_{\le - w_i}.
\end{equation*}
Recall, as in Lemma \ref{llimquotsch}, that the Quot-scheme limit associated to the above filtration is given by $$\hat{\mathcal{E}} := \bigoplus_{i=1}^{\nu} \mathcal{E}_{-w_i},$$ where $ \mathcal{E}_{-w_i} = \mathcal{E}_{ \le -w_i} / \mathcal{E}_{\le -w_{i-1}}$ for $i =1 , \dots , \nu$ (in which we assume $\mathcal{E}_{-w_0} =0$).

The components of $\hat{\mathcal{E}}$ with nontrivial rank are the objects that we need later.

\begin{definition}
	 We define a subset $\{ \hat{1}, \dots , \hat{\nu} \}$ of $\{ 1 , \dots , \nu \}$, with ordering $\hat{1} < \dots < \hat{\nu}$, so that $\mathrm{rk} (\mathcal{E}_{- w_{\alpha}}) >0$ if and only if $\alpha \in \{ \hat{1}, \dots , \hat{\nu} \}$.
\end{definition}

The above definition, together with the invariance of the rank under the flat limit in Quot-schemes, implies
\begin{equation*}
	\sum_{\alpha = \hat{1}}^{\hat{\nu}} \mathrm{rk} (\mathcal{E}_{- w_{\alpha}}) = \mathrm{rk} (\mathcal{E}).
\end{equation*}

\begin{rem} \label{remcombfstfac}
	Since $\mathcal{E}_{\le - w_{1}}$ is a non-zero torsion-free subsheaf of $\mathcal{E}$, we find $\hat{1} =1$ in the above notation. In particular, note that $\mathcal{E}_{\le - w_{1}} \subset \mathcal{E}_{\le - w_{\alpha}}$ for all $\alpha \in \{ \hat{1}, \dots , \hat{\nu} \}$.
\end{rem}

We shall mainly work over an open dense subset of $X$ where each $\mathcal{E}_{\le - w_i}$ is a holomorphic subbundle of $\mathcal{E}$. We make the following definition, by recalling the standard definition of the singular locus.
\begin{definition}
	The \textbf{singular locus} $\mathrm{Sing} (\mathcal{E}_{\le - w_i})$ of $\mathcal{E}_{\le - w_i}$ is the Zariski closed set in $X$ where $\mathcal{E}_{\le - w_i}$ is not locally free. We define the \textbf{regular locus} to be the complement of
	\begin{equation*}
		\bigcup_{i=1}^{\nu} \mathrm{Sing} (\mathcal{E}_{\le - w_i}) \cup \mathrm{Sing} (\mathcal{E} / \mathcal{E}_{\le - w_i})
	\end{equation*}
	in $X$, and denote it by $X^{\mathrm{reg}} (\zeta)$ or $X^{\mathrm{reg}}$.
\end{definition}

Thus each $\mathcal{E}_{\le - w_i}$ is a holomorphic subbundle of $\mathcal{E}$ over $X^{\mathrm{reg}}$, by recalling that a subsheaf $\mathcal{F}$ of a locally free sheaf $\mathcal{E}$ is a holomorphic subbundle of $\mathcal{E}$ if and only if both $\mathcal{F}$ and $\mathcal{E} / \mathcal{F}$ are locally free. Observe that $X \setminus X^{\mathrm{reg}}$ has codimension at least 1 in $X$ by Lemma \ref{lemcdrefshf}.

We define a $C^{\infty}$ complex vector bundle $\widehat{E}$ over $X^{\mathrm{reg}}$ as
\begin{equation*}
	\widehat{E} := \bigoplus_{\alpha =\hat{1}}^{\hat{\nu}} E_{\le -w_{\alpha}} / E_{\le -w_{\alpha-1}},
\end{equation*}
where $E_{\le -w_{\alpha}}$ is the $C^{\infty}$ complex vector bundle over $X^{\mathrm{reg}}$ defined by $\mathcal{E}_{\le -w_{\alpha}}$, and the quotient is the one of the $C^{\infty}$ complex vector bundles over $X^{\mathrm{reg}}$. Note that we have an isomorphism $\widehat{E} \isom E$ as $C^{\infty}$-vector bundles over $X^{\mathrm{reg}}$. Combining this with another $C^{\infty}$-isomorphism $E \isom \overline{E^{\vee}}$, afforded by the $h_{\mathrm{ref}}$-metric dual, we get
\begin{equation} \label{isomeddce}
	\overline{E^{\vee}} \isom \bigoplus_{\alpha =\hat{1}}^{\hat{\nu}} E_{-w_{\alpha}}
\end{equation}
where we wrote $E_{ -w_{\alpha}}  = E_{\le -w_{\alpha}} / E_{\le -w_{\alpha-1}}$.

It turns out that what we need later is the modification of the filtration $0 \neq \mathcal{E}_{\le - w_{1}} \subset \cdots \subset \mathcal{E}_{\le - w_{\nu}} = \mathcal{E}$ on a Zariski closed subset of $X$, given by the following lemma.

\begin{lem} \label{lemcdquot}
	There exists a filtration
\begin{equation} \label{filtevs}
		0 \neq \mathcal{E}'_{\le - w_{1}} \subset \cdots \subset \mathcal{E}'_{\le - w_{\nu}} = \mathcal{E}
\end{equation}
of $\mathcal{E}$ by subsheaves of $\mathcal{E}$, such that 
\begin{enumerate}
	\item $\mathcal{E}'_{\le -w_{i}}$ is saturated in $\mathcal{E}$ and agrees with $\mathcal{E}_{\le {-w_i}}$ on a Zariski open subset of $X$ for all $i = 1 , \dots , \nu$,
\item setting $\mathcal{E}'_{-w_i} := \mathcal{E}'_{\le -w_{i}} / \mathcal{E}'_{\le -w_{i-1}}$, $\mathrm{rk}(\mathcal{E}'_{-w_i}) >0$ if and only if $i \in \{\hat{1}, \dots , \hat{\nu} \}$.
\end{enumerate}
\end{lem}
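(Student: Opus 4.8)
The plan is to take, for each $i$, the subsheaf $\mathcal{E}'_{\le - w_i}$ to be the \emph{saturation} of $\mathcal{E}_{\le -w_i}$ in $\mathcal{E}$, as recalled before Lemma \ref{lemcdrefshf}; that is, $\mathcal{E}'_{\le -w_i}$ is the kernel of the composite $\mathcal{E} \to \mathcal{E}/\mathcal{E}_{\le -w_i} \to (\mathcal{E}/\mathcal{E}_{\le -w_i})/T(\mathcal{E}/\mathcal{E}_{\le -w_i})$. By construction each $\mathcal{E}'_{\le -w_i}$ is saturated in $\mathcal{E}$, which is the first half of property (1).

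First I would check that these subsheaves form a filtration. The key point is that saturation is monotone with respect to inclusion: if $\mathcal{F} \subset \mathcal{G} \subset \mathcal{E}$, the natural surjection $\mathcal{E}/\mathcal{F} \to \mathcal{E}/\mathcal{G}$ carries torsion to torsion and hence descends to a map of the torsion-free quotients, which forces the saturation of $\mathcal{F}$ to lie inside that of $\mathcal{G}$. Applying this to $\mathcal{E}_{\le -w_{i-1}} \subset \mathcal{E}_{\le -w_i}$ yields the chain $\mathcal{E}'_{\le -w_1} \subset \cdots \subset \mathcal{E}'_{\le -w_\nu}$. The top term is $\mathcal{E}$ itself, since $\mathcal{E}/\mathcal{E}=0$ is torsion-free, and the bottom term is nonzero because it contains the nonzero $\mathcal{E}_{\le -w_1}$; this gives (\ref{filtevs}).

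Next I would establish the agreement on a Zariski open set. By definition $\mathcal{E}'_{\le -w_i}/\mathcal{E}_{\le -w_i} \cong T(\mathcal{E}/\mathcal{E}_{\le -w_i})$ is a torsion sheaf, hence supported on a Zariski closed subset of codimension at least $1$ (recall $X$ is integral); on the complement $\mathcal{E}'_{\le -w_i}$ and $\mathcal{E}_{\le -w_i}$ coincide. Intersecting the finitely many such open complements produces a single Zariski open dense set on which every $\mathcal{E}'_{\le -w_i}$ agrees with $\mathcal{E}_{\le -w_i}$, completing property (1). Since rank is computed on any dense open locus where the sheaves are locally free (Lemma \ref{lemcdrefshf}), this agreement gives $\rk(\mathcal{E}'_{\le -w_i}) = \rk(\mathcal{E}_{\le -w_i})$ for all $i$, and by additivity of rank in the short exact sequences defining the graded pieces, $\rk(\mathcal{E}'_{-w_i}) = \rk(\mathcal{E}_{-w_i})$. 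Property (2) then follows at once from the defining property of $\{\hat{1}, \dots, \hat{\nu}\}$.

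The argument is essentially formal, and I do not expect a genuine obstacle; the one step demanding a little care is the monotonicity of saturation under inclusion, used to guarantee that the saturated subsheaves still form an increasing filtration. As a byproduct one even obtains that each graded piece $\mathcal{E}'_{-w_i}$ is torsion-free, being a subsheaf of the torsion-free sheaf $\mathcal{E}/\mathcal{E}'_{\le -w_{i-1}}$, though this is not required for the statement.
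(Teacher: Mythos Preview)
Your proof is correct and follows essentially the same approach as the paper: both constructions take the saturation of $\mathcal{E}_{\le -w_i}$ inside $\mathcal{E}$. The paper only saturates at the indices in $\{\hat{1},\dots,\hat{\nu}\}$ and then copies the result to the intermediate indices, but since two subsheaves of the same rank, one contained in the other, have identical saturations in $\mathcal{E}$, your uniform saturation produces exactly the same filtration; your explicit verification of the monotonicity of saturation is a point the paper leaves implicit.
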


\begin{proof}
	We define (\ref{filtevs}) inductively as follows. $\mathcal{E}'_{\le - w_{\hat{\nu}}}$ is defined as the saturation of $\mathcal{E}_{\le - w_{\hat{\nu}}}$ in $\mathcal{E}$; note that this implies $\mathcal{E}'_{\le - w_{\hat{\nu}}} = \mathcal{E}$, since $\mathcal{E}'_{\le - w_{\hat{\nu}}}$ has full rank. We then replace $\mathcal{E}_{\le - w_{\hat{\nu}+1}}, \dots , \mathcal{E}_{\le -w_{\nu}}$ by $\mathcal{E}'_{\le - w_{\hat{\nu}}} = \mathcal{E}$. Continuing inductively, for $\alpha \in \{ \hat{1} , \dots , \widehat{\nu-1} \}$ we define $\mathcal{E}'_{\le - w_{\alpha}}$ to be the saturation of $\mathcal{E}_{\le - w_{\alpha}}$ in $\mathcal{E}$. Defining $\beta$ to be the smallest element of $\{ \hat{1} , \dots , \hat{\nu} \}$ that is larger than $\alpha \in \{ \hat{1} , \dots , \widehat{\nu -1} \}$, we replace $\mathcal{E}_{\le - w_{\alpha +1}}, \dots , \mathcal{E}_{\le -w_{\beta -1 }}$ by $\mathcal{E}'_{\le - w_{\alpha}}$. The end product that we get is a filtration $0 \neq \mathcal{E}'_{\le - w_{1}} \subset \cdots \subset \mathcal{E}'_{\le - w_{\nu}} = \mathcal{E}$ of $\mathcal{E}$ by saturated subsheaves, in which strict inclusion holds if and only if $i \in \{ \hat{1} , \dots , \hat{\nu} \}$ (recall also $\hat{1}=1$ as in Remark \ref{remcombfstfac}).
	
	Since the above construction does not change the rank of each $\mathcal{E}_{\le - w_{i}}$ for all $i = 1 , \dots , \nu$, and hence does not change $\mathcal{E}_{\le - w_{i}}$ where it is locally free, we see that $\mathcal{E}_{\le - w_{i}}$ and $\mathcal{E}'_{\le - w_{i}}$ agree on the (Zariski open) locus where they are locally free.
\end{proof}

\subsection{Renormalised Quot-scheme limit of Fubini--Study metrics}
Our aim in this section is to find an expansion for $h_{\sigma_{t}} := Q^* \sigma^*_{t} \sigma_{t} Q$ as $t \to + \infty$. Although this limit is divergent, we can find an appropriate rescaling (in terms of the constant gauge transformation) to get a 1-PS of Hermitian metrics $\{ \hat{h}_{\sigma_{t}} \}_{t \ge 0}$ which is convergent. The limit $\hat{h}$, which we call the renormalised Quot-scheme limit of $h_{\sigma_{t}}$, is well defined only on the Zariski open subset $X^{\mathrm{reg}}$ of $X$, and it may tend to a degenerate metric as it approaches the singular locus $X \setminus X^{\mathrm{reg}}$. In particular, its curvature may blow up. In spite of this problem, this limit will play an important role later. A combinatorial argument making use of the ordering $w_1 > \cdots > w_{\nu}$ (\ref{ordlambda}) is of crucial importance for the definition of $\hat{h}$.

\begin{definition} \label{defqij}
Suppose that we fix a $C^{\infty}$-isomorphism $\overline{E^{\vee}} \isom \bigoplus_{\alpha =\hat{1}}^{\hat{\nu}} E_{- w_{\alpha}}$ of $C^{\infty}$-vector bundles over $X^{\mathrm{reg}}$ defined by the reference metric as in (\ref{isomeddce}). Let $\mathrm{pr}_{\alpha} : \overline{E^{\vee}} \relbar\joinrel\twoheadrightarrow E_{- w_{\alpha}}$ be the surjection to each factor.
	\begin{enumerate}
		\item A $C^{\infty}$-map $\tilde{Q}^*_{\alpha} : \overline{V^{\vee}} \otimes \overline{C^{\infty}_{X^{\mathrm{reg}}} (k)} \longrightarrow E_{- w_{\alpha}}$ of locally free sheaves of $C^{\infty}_{X^{\mathrm{reg}}}$-modules is defined by the composition of $Q^*$ and the projection $\mathrm{pr}_{\alpha} : \overline{E^{\vee}} \relbar\joinrel\twoheadrightarrow E_{- w_{\alpha}}$.
		\item Let $j \ge \alpha$. A map $Q_{j \to \alpha}^* : \overline{V^{\vee}_{-w_j }} \otimes \overline{C^{\infty}_{X^{\mathrm{reg}}} (k)} \longrightarrow E_{- w_{\alpha}} $ of locally free sheaves of $C^{\infty}_{X^{\mathrm{reg}}}$-modules is defined as the composition of $Q^* |_{V^{\vee}_{-w_j }} : \overline{V^{\vee}_{-w_j }} \otimes \overline{C^{\infty}_{X^{\mathrm{reg}}} (k)} \longrightarrow \overline{E^{\vee}} $ and $\mathrm{pr}_{\alpha}$ (where the $C^{\infty}$-isomorphism (\ref{isomeddce}) is understood), \emph{i.e.}
		\begin{equation*}
		Q^*_{j \to \alpha} := \mathrm{pr}_{\alpha} \circ Q^* |_{V^{\vee}_{-w_j }}.
		\end{equation*}
		We have thus a diagram
	\begin{displaymath}
		\xymatrixcolsep{6pc}\xymatrix{ \overline{V^{\vee}_{-w_j}} \otimes \overline{C^{\infty}_{X^{\mathrm{reg}}} (k)}   \ar@{->}[r]^-{Q^* |_{V^{\vee}_{-w_j }}} \ar@{->}[dr]_{Q_{j \to \alpha}^*} & \overline{E^{\vee}} \ar@{->}[d]^{\mathrm{pr}_{\alpha}}  \\
		& E_{- w_{\alpha}}.
		}
	\end{displaymath}
	Recalling $E_{-w_{\alpha}} := E_{\le -w_{\alpha}} / E_{\le -w_{\alpha -1}}$, we find $Q^*_{j \to \alpha} = 0$ for $j < \alpha$ (see \emph{e.g.}~(\ref{exprhoint}) and (\ref{ordmlambda})).
	\end{enumerate}	
\end{definition}

\begin{rem}
	In what follows, we assume a convention in which $Q^*_{j \to \alpha}$ is defined on the whole $V^{\vee}_k$, with $Q^*_{j \to \alpha} |_{V^{\vee}_{-w_l}} =0$ if $l \neq j$.
\end{rem}

We define $\tilde{Q}_{\alpha}$ and $Q_{\alpha \to j}$ analogously, taking the fibrewise Hermitian conjugate of $\tilde{Q}^*_{\alpha}$ and $Q^*_{j \to \alpha}$, which is equivalent to taking the Hermitian metric dual with respect to the reference metrics defined in Section \ref{fsmetvb}.

The main technical result that we establish in this section is the following.

\begin{prop} \label{propexpfsz}
Suppose that we write
	\begin{equation} \label{defewt}
	e^{wt} := \mathrm{diag} (e^{ w_{\hat{1}} t} , \ldots , e^{w_{\hat{\nu}} t}),	
	\end{equation}
	with respect to the block decomposition (\ref{isomeddce}). Then the Fubini--Study metric $h_{\sigma_{t}} = Q^* e^{\zeta^* t + \zeta t} Q$ can be written as $h_{\sigma_{t}} = e^{w t} \hat{h}_{\sigma_{t}} e^{w t}$, where $\hat{h}_{\sigma_{t}}$ is defined as
\begin{equation*}
	\hat{h}_{\sigma_{t}} :=
	\begin{pmatrix}
		Q^*_{\hat{1} \to \hat{1}} Q_{\hat{1} \to \hat{1}} & \cdots & 0 \\
		\vdots & \ddots & \vdots \\
		0 & \cdots & Q^*_{{\hat{\nu}} \to \hat{\nu}} Q_{\hat{\nu} \to {\hat{\nu}}}
	\end{pmatrix}
	+ \bar{I}_{\sigma_{t}} ,
\end{equation*}
over $X^{\mathrm{reg}}$, in terms of the decomposition $\bigoplus_{\alpha =1}^{\hat{\nu}} E_{-w_{\alpha}}$; $\bar{I}_{\sigma_t}$ is a fibrewise Hermitian form of rank $r$, whose $(\alpha , \beta )^\mathrm{th}$ block is given by 
	\begin{equation*}
	\sum_{j> \alpha , \beta} e^{-( w_{\alpha} - w_j ) t} e^{-( w_{\beta} - w_j ) t} Q^*_{j \to \alpha} Q_{\beta \to j},
\end{equation*}
which decays exponentially as $t \to + \infty$ $($since $w_\alpha - w_j  >0$ for $j>\alpha$ by (\ref{ordlambda})$\,)$.
\end{prop}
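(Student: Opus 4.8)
The plan is to reduce the statement to a fibrewise linear-algebra identity over $X^{reg}$, obtained by inserting the weight decomposition of $H^0(\mathcal{E}(k))$ into $h_{\sigma_t} = Q^* e^{\zeta^* t + \zeta t} Q$ and reorganising the result according to the block decomposition (\ref{isomeddce}). Everything is carried out pointwise on $X^{reg}$, where each $\mathcal{E}_{\le -w_i}$ is a subbundle and the maps $Q$, $Q^*$, $Q^*_{j\to\alpha}$, $Q_{\beta\to j}$ of Definition \ref{defqij} are genuine $C^\infty$-bundle morphisms, so that the resulting identity holds as an equality of smooth hermitian forms. The one structural fact I would isolate at the outset is the weight action: since $\zeta$ acts on the summand $V^{\vee}_{-w_j,k}$ with weight $w_j$ and is $I_{\mathbb{S}_k}$-hermitian, the operator $\sigma_t^* \sigma_t = e^{\zeta^* t + \zeta t}$ acts there by the scalar $e^{2 w_j t}$.

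First I would decompose $Q^*$ and $Q$ into their components for the two gradings, writing $Q^* = \sum_{j,\alpha} Q^*_{j\to\alpha}$ and, by taking fibrewise hermitian conjugates, $Q = \sum_{\beta,j} Q_{\beta\to j}$. A direct block matrix multiplication, using that $\sigma_t^*\sigma_t$ is diagonal in the weight grading with eigenvalue $e^{2w_j t}$ on the $j$-th summand, then gives
\begin{equation*}
	(h_{\sigma_t})_{\alpha\beta} = \sum_{j} e^{2 w_j t}\, Q^*_{j\to\alpha} Q_{\beta\to j}.
\end{equation*}
The key input is the triangular vanishing recorded in Definition \ref{defqij}, namely $Q^*_{j\to\alpha} = 0$ for $j < \alpha$ (and hence $Q_{\beta\to j} = 0$ for $j < \beta$), which is exactly the shape of the Quot-scheme limit in (\ref{exprhoint}) transported to the $C^\infty$-quotient picture of Lemma \ref{cdiamq}; this restricts the sum to $j \ge \max(\alpha,\beta)$.

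Next I would factor out the weights. Pulling $e^{w_\alpha t}$ out on the left and $e^{w_\beta t}$ out on the right of each block, and using $2 w_j - w_\alpha - w_\beta = -(w_\alpha - w_j) - (w_\beta - w_j)$, yields $h_{\sigma_t} = e^{wt} \hat h_{\sigma_t} e^{wt}$ with
\begin{equation*}
	(\hat h_{\sigma_t})_{\alpha\beta} = \sum_{j \ge \max(\alpha,\beta)} e^{-(w_\alpha - w_j)t}\, e^{-(w_\beta - w_j)t}\, Q^*_{j\to\alpha} Q_{\beta\to j}.
\end{equation*}
It then remains to separate the leading part from the remainder. On a diagonal block $\alpha=\beta$ the summand $j=\alpha$ carries coefficient $e^{0}=1$ and equals $Q^*_{\alpha\to\alpha}Q_{\alpha\to\alpha}$, reproducing the block-diagonal matrix of the statement; every other summand has $w_\alpha - w_j > 0$ by the ordering (\ref{ordlambda}). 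Placing all the remaining summands into $\bar I_{\sigma_t}$ gives precisely the stated $(\alpha,\beta)$-block, and each such summand decays exponentially since it carries a factor $e^{-(w_\alpha-w_j)t}$ (resp. $e^{-(w_\beta-w_j)t}$) with $w_\alpha-w_j>0$ (resp. $w_\beta-w_j>0$) whenever $j>\alpha$ (resp. $j>\beta$). A hermiticity check, $(\hat h_{\sigma_t})_{\alpha\beta}^* = (\hat h_{\sigma_t})_{\beta\alpha}$, confirms the bookkeeping.

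I expect no genuinely hard step: the argument is elementary fibrewise linear algebra, with no PDE or analytic input beyond smoothness of the data on $X^{reg}$. The main thing to get right is the simultaneous bookkeeping of the two gradings — the weight grading of $H^0(\mathcal{E}(k))$ and the block grading (\ref{isomeddce}) of $\overline{E^{\vee}}$ — together with the triangular vanishing $Q^*_{j\to\alpha}=0$ for $j<\alpha$ inherited from (\ref{exprhoint}). Keeping the correct range of $j$ in each block, and remembering that $Q^*_{j\to\alpha}$ is extended by zero off $V^{\vee}_{-w_j,k}$, is the only place where a naive computation could slip.
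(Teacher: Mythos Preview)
Your proof is correct and follows essentially the same approach as the paper's. Both arguments decompose $Q^*$ according to the weight grading of $H^0(\mathcal{E}(k))$ and the block grading (\ref{isomeddce}), use the triangular vanishing $Q^*_{j\to\alpha}=0$ for $j<\alpha$, and factor out $e^{w_\alpha t}$, $e^{w_\beta t}$ to isolate the leading block-diagonal part; the paper organises this via an explicit row-echelon block matrix for $Q^* e^{\zeta^* t}$ (after the preparatory Lemma \ref{lemdcpqst}) and then multiplies by its conjugate transpose, whereas you compute the product $Q^* e^{2\zeta t} Q$ directly in index notation, which is a purely cosmetic difference.
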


Thus, we can ``separate'' the part $\hat{h}_{\sigma_{t}}$ of $h_{\sigma_{t}}$ which converges as $t \to + \infty$; moreover, we have an explicit description of the rate of convergence. Since they play an important role in what follows, we introduce the following definition.

\begin{definition} \label{defrenbgops}
	The path of Hermitian metrics $\{ \hat{h}_{\sigma_{t}} \}_{t \ge 0}$, where $\hat{h}_{\sigma_{t}}$ is a Hermitian metric defined on the regular locus $X^{\mathrm{reg}}$ by
	\begin{equation*}
		\hat{h}_{\sigma_{t}} :=  e^{- w t} h_{\sigma_{t}}  e^{- w t} \in \Gamma_{C^{\infty}_{X^{\mathrm{reg}}}} ( E^{\vee} \otimes  \overline{E^{\vee}}),
	\end{equation*}
	is called the \textbf{renormalised Bergman 1-parameter subgroup} associated to $\sigma_{t}$, where the $C^{\infty}$-isomorphism $\overline{E^{\vee}} \isom  \bigoplus_{\alpha =\hat{1}}^{\hat{\nu}} E_{-w_{\alpha}}$ and the notation (\ref{defewt}) are understood. We also call its limit in $\Gamma_{C^{\infty}_{X^{\mathrm{reg}}}} ( E^{\vee} \otimes  \overline{E^{\vee}})$
	\begin{equation*}
		\hat{h} :=   \lim_{t \to + \infty} e^{- w t} h_{\sigma_{t}}  e^{- w t} =  \bigoplus_{\alpha=\hat{1}}^{\hat{\nu}} Q^*_{{\alpha} \to \alpha} Q_{\alpha \to {\alpha}}
	\end{equation*}
	the \textbf{renormalised Quot-scheme limit} of $h_{\sigma_{t}}$, and each component of $\hat{h}$ is written as $\hat{h}_{\alpha} := Q^*_{{\alpha} \to \alpha} Q_{\alpha \to {\alpha}}$, so that $\hat{h} = \bigoplus_{\alpha=\hat{1}}^{\hat{\nu}} \hat{h}_{\alpha}$, where $\hat{h}_{\alpha} \in \Gamma_{C^{\infty}_{X^{\mathrm{reg}}}} ( E_{-w_{\alpha}} \otimes E_{-w_{\alpha}} )$, with $\overline{E^{\vee}} \isom  \bigoplus_{\alpha =\hat{1}}^{\hat{\nu}} E_{-w_{\alpha}}$ understood.
\end{definition}

Thus, ``renormalised'' in the above means that we apply the constant gauge transform over $X^{\mathrm{reg}}$ to $h_{\sigma_{t}}$, by ($-1$ times) the weight of $\zeta$ that acts on $\widehat{E} = \bigoplus_{\alpha=\hat{1}}^{\hat{\nu}} E_{-w_{\alpha}}$; we then get a component $\hat{h}_{\sigma_{t}}$ of $h_{\sigma_{t}}$ which converges as $t \to + \infty$.

It turns out that the renormalised Quot-scheme limit $\hat{h}$ defines a Hermitian metric on the vector bundle $E$ over a Zariski open subset $X^{\mathrm{reg}} = X^{\mathrm{reg}}(\zeta)$ of $X$. In general, however, $\hat{h}$ fails to be positive definite on the Zariski closed subset $X \setminus X^{\mathrm{reg}}$ of $X$, which fundamentally comes from the singularity of the sheaves $\mathcal{E}_{\le - w_i}$ and $\mathcal{E} / \mathcal{E}_{\le - w_i}$.

We now start the proof of Proposition \ref{propexpfsz} by proving the following result, which can be regarded as a $C^{\infty}$-analogue of the expansion (\ref{exprhoint}).

\begin{lem} \label{lemdcpqst}
	We have the following expansion over $X^{\mathrm{reg}}$
	\begin{equation*}
		Q^* \circ e^{\zeta t} = \bigoplus_{\alpha = \hat{1}}^{\hat{\nu}} e^{w_{\alpha} t} \left( Q^*_{{\alpha} \to \alpha} + \sum_{j > \alpha} e^{-(w_{\alpha} - w_j )t } Q^*_{j \to \alpha} \right) ,
	\end{equation*}
	where the direct sum is understood in terms of the $C^{\infty}$-isomorphism $\overline{E^{\vee}} \isom \bigoplus_{\alpha =\hat{1}}^{\hat{\nu}} E_{- w_{\alpha}}$.
\end{lem}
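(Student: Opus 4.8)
The plan is to compute $Q^* \circ e^{\zeta t}$ blockwise, using the weight-space decomposition of the source together with the $C^{\infty}$-block decomposition (\ref{isomeddce}) of the target, and to recognise the resulting pieces as precisely the maps $Q^*_{j \to \alpha}$ of Definition \ref{defqij}. First I would record the two decompositions in play. On the source, the weight-preserving metric-dual convention gives $\overline{H^0(\mathcal{E}(k))^{\vee}} = \bigoplus_j \overline{V^{\vee}_{-w_j, k}}$, on which $e^{\zeta t}$ acts diagonally: since $\zeta$ acts with weight $w_j$ on the $j$-th summand, $e^{\zeta t}$ restricts there to multiplication by the scalar $e^{w_j t}$ (the reality of the eigenvalues $w_j$ ensures that passing to the complex conjugate does not alter this scalar). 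On the target, over $X^{reg}$ the isomorphism (\ref{isomeddce}) gives $\overline{E^{\vee}} \cong \bigoplus_{\alpha} E_{-w_{\alpha}}$ with projections $\mathrm{pr}_{\alpha}$, and by definition $Q^*_{j \to \alpha} = \mathrm{pr}_{\alpha} \circ Q^*|_{\overline{V^{\vee}_{-w_j,k}}}$.

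Then I would assemble the composition. Restricting $Q^* \circ e^{\zeta t}$ to the weight-$w_j$ summand of the source and projecting to the $\alpha$-th target block yields $e^{w_j t}\, Q^*_{j \to \alpha}$, since the scalar $e^{w_j t}$ commutes with $Q^*|_{\overline{V^{\vee}_{-w_j,k}}}$. Summing over all source weights $j$ for each fixed target block $\alpha$, and invoking $Q^*_{j \to \alpha} = 0$ for $j < \alpha$ (Definition \ref{defqij}), only the terms with $j \ge \alpha$ survive, so the $\alpha$-th block of $Q^* \circ e^{\zeta t}$ equals $\sum_{j \ge \alpha} e^{w_j t}\, Q^*_{j \to \alpha}$. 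Factoring out $e^{w_{\alpha} t}$ isolates the leading term $Q^*_{\alpha \to \alpha}$ (coming from $j = \alpha$) and rewrites the remaining exponents as $e^{(w_j - w_{\alpha})t} = e^{-(w_{\alpha} - w_j)t}$; the ordering (\ref{ordlambda}) guarantees $w_{\alpha} - w_j > 0$ for $j > \alpha$, which gives exactly the claimed expansion. In this sense the lemma is just the expansion (\ref{exprhoint}) read off at the level of $C^{\infty}$-maps rather than sheaves.

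The computation is essentially bookkeeping, and the only point requiring genuine care is the first one: confirming that $e^{\zeta t}$ acts on the conjugated weight space $\overline{V^{\vee}_{-w_j,k}}$ by the honest scalar $e^{w_j t}$, and that the weight indexing on the source is correctly matched with the block indexing $\alpha$ of the graded quotients $E_{-w_{\alpha}}$ on the target, so that the ``diagonal'' source weight $j = \alpha$ indeed supplies the leading term $Q^*_{\alpha \to \alpha}$. Once the weight-preserving convention for the metric dual is pinned down and the vanishing $Q^*_{j \to \alpha} = 0$ for $j < \alpha$ is in hand, this matching is immediate and no analytic input is needed, the entire argument taking place fibrewise over $X^{reg}$.
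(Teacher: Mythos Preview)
Your proposal is correct and follows essentially the same approach as the paper: decompose $Q^*$ according to the target blocks $E_{-w_\alpha}$ (via $\tilde{Q}^*_\alpha$) and the source weight spaces (via the restrictions defining $Q^*_{j\to\alpha}$), use that $e^{\zeta t}$ acts as the scalar $e^{w_j t}$ on the $j$-th weight piece so that $Q^*_{j\to\alpha}\circ e^{\zeta t}=e^{w_j t}Q^*_{j\to\alpha}$, invoke the vanishing $Q^*_{j\to\alpha}=0$ for $j<\alpha$, and factor out $e^{w_\alpha t}$. The only difference is that you spell out the metric-dual bookkeeping more explicitly than the paper does, but the argument is the same.
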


\begin{proof}
Definition \ref{defqij} immediately yields
\begin{equation*}
	\tilde{Q}^*_{\alpha} = Q^*_{{\alpha} \to \alpha} + \sum_{j > \alpha} Q^*_{j \to \alpha}.
\end{equation*}
With the $C^{\infty}$-isomorphism $\overline{E^{\vee}} \cong \bigoplus_{\alpha =\hat{1}}^{\hat{\nu}} E_{-w_{\alpha}}$, we can write
\begin{equation*}
	Q^* = \bigoplus_{\alpha =\hat{1}}^{\hat{\nu}} \tilde{Q}^*_{\alpha} = \bigoplus_{\alpha =\hat{1}}^{\hat{\nu}} \left( Q^*_{{\alpha} \to \alpha} + \sum_{j > \alpha} Q^*_{j \to \alpha} \right).
\end{equation*}
Recalling that $e^{\zeta t} = T^{-\zeta}$ acts on $V_{-w_j }$ as $e^{w_j t} = T^{-w_j}$, and hence
\begin{equation*}
	Q^*_{j \to \alpha} \circ e^{\zeta t} = e^{ w_j t} Q^*_{j \to \alpha},
\end{equation*}
we get the claim.
\end{proof}

\begin{proof}[Proof of Proposition \ref{propexpfsz}]
	We start by rewriting Lemma \ref{lemdcpqst} in a block-matrix notation, according to the decompositions $\bigoplus_{\alpha =\hat{1}}^{\hat{\nu}} E_{-w_{\alpha}}$ and $\bigoplus_{j=1}^{\nu} V_{-w_j }$.

%

We can write $Q^*$ in the following (block) row echelon form
\begin{equation*}
	Q^* =
	\begin{pmatrix}
		Q^*_{1 \to \hat{1}} & Q^*_{2 \to \hat{1}} & \cdots & \cdots & \cdots & \cdots & Q^*_{{\nu} \to \hat{1}} \\
		0  & \cdots & 0 & Q^*_{\hat{2} \to \hat{2}} & \cdots & \cdots & Q^*_{{\nu} \to \hat{2}} \\
		\vdots & \vdots & \vdots & \cdots & \ddots & \ddots & \vdots \\
		0 & 0 & 0 & \cdots & Q^*_{\hat{\nu} \to \hat{\nu}} & \cdots & Q^*_{{\nu} \to \hat{\nu}}
	\end{pmatrix},
\end{equation*}
where we recall $Q^*_{j \to \alpha} = 0$ for $j < \alpha$, and also $\hat{1} =1$ (as in Remark \ref{remcombfstfac}).

Thus, recalling that $\zeta$ is Hermitian and acts on $V_{-w_j }$ as $T^{-w_j}$ with $w_j \in \mathbb{Q}$, we compute
{\footnotesize  \renewcommand{\arraystretch}{0.2}
\begin{align*}
	Q^* e^{\zeta^* t} = Q^* \circ T^{- \zeta} &=\begin{pmatrix}
		T^{-w_{1}} Q^*_{1 \to \hat{1}} & T^{-w_{2}} Q^*_{2 \to \hat{1}} & \cdots & \cdots & \cdots & \cdots & T^{-w_{\nu}} Q^*_{{\nu} \to \hat{1}} \\
		0  & \cdots & 0 & T^{-w_{\hat{2}}} Q^*_{\hat{2} \to \hat{2}} & \cdots & \cdots & T^{-w_{\nu}} Q^*_{{\nu} \to \hat{2}} \\
		\vdots & \vdots & \vdots & \cdots & \ddots & \ddots & \vdots \\
		0 & 0 & 0 & \cdots & T^{-w_{\hat{\nu}}} Q^*_{\hat{\nu} \to \hat{\nu}} & \cdots & T^{-w_{\nu}} Q^*_{{\nu} \to \hat{\nu}}
	\end{pmatrix} \\
	&=\begin{pmatrix}
		T^{-w_{1}} & 0  & \cdots & 0 \\
		0 & T^{-w_{\hat{2}}} & \cdots & 0 \\
		\vdots & \vdots & \ddots & \vdots \\
		0 & 0 & \cdots & T^{-w_{\hat{\nu}}}
	\end{pmatrix}
	\\
	&\,\hspace{0.2cm}\times
	\begin{pmatrix}
		 Q^*_{1 \to \hat{1}} & T^{-( w_{2} - w_{1} )} Q^*_{2 \to \hat{1}} & \cdots & \cdots & \cdots & \cdots & T^{- (w_{\nu} - w_{1} )} Q^*_{{\nu} \to \hat{1}} \\
		0  & \cdots & 0 &  Q^*_{\hat{2} \to \hat{2}} & \cdots & \cdots & T^{- (w_{\nu} - w_{\hat{2}} ) } Q^*_{{\nu} \to \hat{2}} \\
		\vdots & \vdots & \vdots & \cdots & \ddots & \ddots & \vdots \\
		0 & 0 & 0 & \cdots &  Q^*_{\hat{\nu} \to \hat{\nu}} & \cdots & T^{-( w_{\nu} - w_{\hat{\nu}} )} Q^*_{{\nu} \to \hat{\nu}}
	\end{pmatrix}.
\end{align*}}
By taking the conjugate transpose, we obtain a similar formula for $e^{\zeta t} Q$. Since $\hat{1} =1$ (\emph{cf.}~Remark \ref{remcombfstfac}) and $T^{-w_j} = e^{w_j t}$, we thus get
\begin{align*}
	Q^* e^{\zeta^* t + \zeta t} Q &= e^{wt} \left(  
	\begin{pmatrix}
		Q^*_{\hat{1} \to \hat{1}} Q_{\hat{1} \to \hat{1}} & \cdots & 0 \\
		\vdots & \ddots & \vdots \\
		0 & \cdots & Q^*_{\hat{\nu} \to \hat{\nu}} Q_{\hat{\nu} \to \hat{\nu}}
	\end{pmatrix}
	+ \bar{I}_{\sigma_t}
	\right)e^{wt},
\end{align*}
where the $(\alpha, \beta )^\mathrm{th}$ block of $\bar{I}_{\sigma_t}$ in the above, with $\alpha , \beta \in \{ \hat{1} , \dots , \hat{\nu} \}$, can be written as
\begin{equation*}
	\sum_{j> \alpha , \beta} T^{-( w_j - w_{\alpha} )} T^{-( w_j - w_{\beta} )} Q^*_{j \to \alpha} Q_{\beta \to j} .
\end{equation*}
Recalling again $T = e^{-t}$, we get the result as claimed.
\end{proof}

\begin{lem} \label{pdxreghath}
	The metric $\hat{h}$ is strictly positive definite over $X^{\mathrm{reg}}$.
\end{lem}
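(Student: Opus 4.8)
The plan is to reduce positivity of $\hat h$ to the fibrewise surjectivity of its diagonal blocks, and then to deduce that surjectivity from the fact that $\mathcal{E}_{\le -w_i}$ is \emph{by definition} the image of the filtered piece $V_{\le -w_i,k}$ under $\rho$. Since by Definition \ref{defrenbgops} we have $\hat h=\bigoplus_{\alpha=\hat{1}}^{\hat{\nu}}\hat h_\alpha$ with $\hat h_\alpha=Q^*_{\alpha\to\alpha}Q_{\alpha\to\alpha}$, and a block-diagonal fibrewise hermitian form is positive definite if and only if each block is, it suffices to prove that each $\hat h_\alpha$ is positive definite on $E_{-w_\alpha}$ over $X^{reg}$. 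As $Q_{\alpha\to\alpha}$ is the fibrewise hermitian conjugate of $Q^*_{\alpha\to\alpha}$, the form $\hat h_\alpha=Q^*_{\alpha\to\alpha}Q_{\alpha\to\alpha}$ is of the shape $B^*B$, hence fibrewise positive definite exactly when $B=Q_{\alpha\to\alpha}$ is fibrewise injective, equivalently when $Q^*_{\alpha\to\alpha}\colon \overline{V^\vee_{-w_\alpha,k}}\otimes\overline{C^{\infty}_{X^{reg}}(k)}\to E_{-w_\alpha}$ is fibrewise surjective over $X^{reg}$. The whole statement therefore reduces to this surjectivity.

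To prove it I would show that, over $X^{reg}$, the map $Q^*$ sends the filtered piece $\overline{V^\vee_{\le -w_i,k}}=\bigoplus_{l\le i}\overline{V^\vee_{-w_l,k}}$ \emph{onto} $\overline{E^\vee_{\le -w_i}}:=\bigoplus_{\beta\le i}E_{-w_\beta}$ for every $i$, using the identification (\ref{isomeddce}). Two facts combine to give this. First, the echelon relation $Q^*_{j\to\alpha}=0$ for $j<\alpha$ from Definition \ref{defqij} says exactly that $Q^*(\overline{V^\vee_{\le -w_i,k}})\subseteq\overline{E^\vee_{\le -w_i}}$. Second, I claim the fibrewise rank of $Q^*|_{\overline{V^\vee_{\le -w_i,k}}}$ equals $\rk(\mathcal{E}_{\le -w_i})=\dim\overline{E^\vee_{\le -w_i}}$: indeed $Q^*=f_{\mathbb{S}_k,\mathrm{ref}}\circ\rho^*_{C^{\infty}}$ with $f_{\mathbb{S}_k,\mathrm{ref}}$ fibrewise invertible (Lemma \ref{cdiamq}), so the rank of $Q^*$ on any subspace agrees with that of $\rho^*_{C^{\infty}}$, and $\rho^*_{C^{\infty}}$ is nothing but $\rho_{C^{\infty}}$ composed with the weight-preserving metric-dual isomorphisms $I_{\mathbb{S}_k}$, $h_L$, $h_{\mathrm{ref}}$; since $\mathcal{E}_{\le -w_i}=\rho(V_{\le -w_i,k}\otimes\mathcal{O}_X(-k))$ by definition and all sheaves in the filtration are locally free over $X^{reg}$ by Lemma \ref{lemcdrefshf}, the smooth map $\rho_{C^{\infty}}$ carries $V_{\le -w_i,k}$ fibrewise onto $E_{\le -w_i}$, of rank $\rk(\mathcal{E}_{\le -w_i})$. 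An inclusion of subspaces of equal dimension is an equality, so $Q^*(\overline{V^\vee_{\le -w_i,k}})=\overline{E^\vee_{\le -w_i}}$ fibrewise over $X^{reg}$.

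Granting this strictness, the conclusion is immediate by passing to associated graded. For each $\alpha\in\{\hat{1},\dots,\hat{\nu}\}$ and each fibrewise class in $E_{-w_\alpha}=\overline{E^\vee_{\le -w_\alpha}}/\overline{E^\vee_{\le -w_{\alpha-1}}}$, choose a representative in $\overline{E^\vee_{\le -w_\alpha}}=Q^*(\overline{V^\vee_{\le -w_\alpha,k}})$ and pull it back to $\overline{V^\vee_{\le -w_\alpha,k}}$; projecting the preimage to $\overline{V^\vee_{-w_\alpha,k}}$ and using that $Q^*(\overline{V^\vee_{\le -w_{\alpha-1},k}})\subseteq\overline{E^\vee_{\le -w_{\alpha-1}}}$ is killed in the quotient, we see that the induced map on graded pieces is surjective. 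Unwinding Definition \ref{defqij}, this graded map is precisely $Q^*_{\alpha\to\alpha}=\mathrm{pr}_\alpha\circ Q^*|_{\overline{V^\vee_{-w_\alpha,k}}}$, so $Q^*_{\alpha\to\alpha}$ is fibrewise surjective over $X^{reg}$, which is what was needed.

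The main obstacle is the strictness step, i.e.\ upgrading the echelon inclusion $Q^*(\overline{V^\vee_{\le -w_i,k}})\subseteq\overline{E^\vee_{\le -w_i}}$ to an equality. Fibrewise full rank of $Q^*$ on the whole of $\overline{V^\vee_k}$ together with its block upper-triangular shape does \emph{not} by itself force the diagonal blocks to be surjective (a full-rank block upper-triangular matrix can have degenerate diagonal blocks); what rescues the argument is the definitional description of $\mathcal{E}_{\le -w_i}$ as the image of $\rho$, transported fibrewise to $X^{reg}$ via the invertible gauge $f_{\mathbb{S}_k,\mathrm{ref}}$ and the weight-preserving metric duals. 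The remaining care is purely the bookkeeping of these identifications and the observation, via Lemma \ref{lemcdrefshf}, that the filtration consists of subbundles away from the singular locus, so that all ranks are computed fibrewise.
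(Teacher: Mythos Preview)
Your proof is correct and follows essentially the same approach as the paper. Both arguments reduce to the fibrewise surjectivity of $Q^*_{\alpha\to\alpha}$, derive this from the definitional fact that $\mathcal{E}_{\le -w_i}=\rho(V_{\le -w_i,k}\otimes\mathcal{O}_X(-k))$, and invoke Lemma~\ref{cdiamq} (together with the weight-preserving metric duals in diagram~(\ref{cdiamqd})) to transport surjectivity from $\rho$ to $Q^*$. The only organisational difference is that the paper works directly at the graded level---defining $\rho_{\alpha\to\alpha}$ and observing its surjectivity from $\mathcal{E}_{-w_\alpha}=\rho(V_{\le -w_\alpha})/\rho(V_{\le -w_{\alpha-1}})$---whereas you first establish the filtration-level strictness $Q^*(\overline{V^\vee_{\le -w_i,k}})=\overline{E^\vee_{\le -w_i}}$ and then pass to associated graded; your version is slightly more explicit about why the rank-preservation step via $f_{\mathbb{S}_k,\mathrm{ref}}$ works.
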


\begin{proof}
	It suffices to show that the map $Q_{\alpha \to \alpha}^* : \overline{V^{\vee}_{-w_{\alpha} }} \otimes \overline{C^{\infty}_{X^{\mathrm{reg}}} (k)} \to E_{- w_{\alpha}} $ defined in Definition \ref{defqij} is surjective, since it implies that $Q^*_{\alpha \to \alpha} Q_{\alpha \to \alpha}$ is strictly nondegenerate.
	
	First consider the map $\rho_{\alpha \to \alpha} : V_{- w_{\alpha}} \otimes \mathcal{O}_X(-k) \to \mathcal{E}_{-w_{\alpha}}$, which is defined by the composition of the map $\rho |_{ V_{- w_{\alpha}}} : V_{- w_{\alpha}} \otimes \mathcal{O}_X(-k) \to \mathcal{E}_{\le -w_{\alpha}}$ and the projection $\mathcal{E}_{\le -w_{\alpha}} \surj \mathcal{E}_{\le -w_{\alpha}}/\mathcal{E}_{\le -w_{\alpha -1}} = \mathcal{E}_{-w_{\alpha}}$. Recalling the definition $\mathcal{E}_{-w_{\alpha}} = \rho (V_{\le - w_{\alpha}}) / \rho (V_{\le - w_{\alpha -1}})$, we see that $\rho_{\alpha \to \alpha}$ is surjective.
	
	Lemma \ref{cdiamq} and the diagram (\ref{cdiamqd}) imply that the surjectivity of $Q_{\alpha \to \alpha}^*$ follows from that of $\rho_{\alpha \to \alpha}$.
\end{proof}

While $\hat{h}$ may degenerate as it approaches $X \setminus X^{\mathrm{reg}}$ and hence has no uniform lower bound, it has a uniform upper bound given by $Q^*Q$, stated as follows. 

\begin{lem} \label{lemqjalpqunif}
$| Q^*_{j \to \alpha} |^2 = \tr (Q^*_{j \to \alpha} Q_{\alpha \to j})$ can be bounded by $|Q^*|^2$ for any $j =1, \dots , \nu$ and $\alpha = \hat{1} , \dots , \hat{\nu}$.
\end{lem}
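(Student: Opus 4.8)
The plan is to read both quantities as pointwise Hilbert--Schmidt norms over $X^{reg}$: since $Q_{\alpha \to j}$ is the fibrewise hermitian conjugate of $Q^*_{j \to \alpha}$, we have $|Q^*_{j \to \alpha}|^2 = \tr(Q^*_{j \to \alpha} Q_{\alpha \to j}) = \|Q^*_{j \to \alpha}\|^2_{HS}$, and likewise $|Q^*|^2 = \tr(Q^* Q) = \|Q^*\|^2_{HS}$, both taken with respect to $I_{\mathbb{S}_k}$ and $h_{\mathrm{ref}}$. I would then exhibit $Q^*_{j \to \alpha}$ as a block of $Q^*$ sandwiched between norm non-increasing maps. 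Concretely, by Definition \ref{defqij} (and the convention that $Q^*_{j \to \alpha}$ vanishes off $V^\vee_{-w_j,k}$) we may write $Q^*_{j \to \alpha} = \mathrm{pr}_\alpha \circ Q^* \circ \iota_j$, where $\iota_j : \overline{V^\vee_{-w_j,k}} \otimes \overline{C^\infty_{X^{reg}}(k)} \hookrightarrow \overline{V^\vee_k} \otimes \overline{C^\infty_{X^{reg}}(k)}$ is the inclusion of the $j$-th weight summand and $\mathrm{pr}_\alpha : \overline{E^\vee} \to E_{-w_\alpha}$ is the projection of (\ref{isomeddce}).

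First I would record that the weight decomposition $\overline{V^\vee_k} = \bigoplus_j \overline{V^\vee_{-w_j,k}}$ is $I_{\mathbb{S}_k}$-orthogonal: this holds because $\zeta$ is $I_{\mathbb{S}_k}$-hermitian, so its weight spaces are mutually orthogonal, which is also why $\mathbb{S}_k$ was chosen compatible with the decomposition. Hence $\iota_j$ is a fibrewise isometric inclusion, and extending an $I_{\mathbb{S}_k}$-orthonormal basis of $\overline{V^\vee_{-w_j,k}}$ to one of $\overline{V^\vee_k}$ shows $\|Q^* \circ \iota_j\|^2_{HS} \le \|Q^*\|^2_{HS}$ pointwise on $X^{reg}$, since the Hilbert--Schmidt norm of $Q^* \circ \iota_j$ only sums over the sub-basis.

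Next I would choose the $C^\infty$-isomorphism (\ref{isomeddce}) so that the decomposition $\overline{E^\vee} \cong \bigoplus_\alpha E_{-w_\alpha}$ is $h_{\mathrm{ref}}$-orthogonal over $X^{reg}$; this is possible because over $X^{reg}$ each $\mathcal{E}_{\le -w_i}$ is a genuine smooth subbundle, so the filtration splits by successive $h_{\mathrm{ref}}$-orthogonal complements. With this choice each $\mathrm{pr}_\alpha$ is an orthogonal projection, hence of fibrewise operator norm at most $1$ at every point, so $\|\mathrm{pr}_\alpha \circ (Q^* \circ \iota_j)\|^2_{HS} \le \|Q^* \circ \iota_j\|^2_{HS}$. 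Combining the two estimates gives $|Q^*_{j \to \alpha}|^2 \le |Q^*|^2$ pointwise on $X^{reg}$; and since $|Q^*|^2 = \tr(Q^* Q)$ is the trace of the reference Fubini--Study metric, a smooth function on the compact $X$, it is bounded above, yielding the asserted uniform bound.

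The point requiring care is uniformity near $X \setminus X^{reg}$: a priori the bundle splitting, and with it the projections $\mathrm{pr}_\alpha$, could degenerate as one approaches the singular locus, in which case a crude operator-norm estimate would blow up. The key observation that avoids this is that an orthogonal projection has operator norm at most $1$ independently of how the subbundles are positioned, so no point-dependent constant enters; the only genuine degeneration of the renormalised limit resides in the scalar factors $e^{wt}$ of Proposition \ref{propexpfsz}, not in the blocks $Q^*_{j \to \alpha}$, which remain harmless sub-blocks of the globally defined $Q^*$. If instead one insists on a fixed non-orthogonal isomorphism in (\ref{isomeddce}), the same argument gives the bound up to a multiplicative constant, which already suffices for the use of this lemma as a uniform upper bound on $\hat{h}$.
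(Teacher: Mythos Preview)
Your proof is correct and takes essentially the same approach as the paper, which dispatches the lemma in one line by noting the block-orthogonality $Q^*_{j \to \alpha} Q_{\beta \to m} = 0$ for $j \neq m$ and then bounding each nonnegative summand by the total $\tr(Q^*Q)$. Your argument via isometric inclusions and orthogonal projections is simply the careful unpacking of that block computation, together with a helpful remark on uniformity near $X \setminus X^{reg}$ that the paper leaves implicit.
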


\begin{proof}
This is straightforward, by noting $Q^*_{j \to \alpha} Q_{\beta \to m} =0$ if $j \neq m$ and so on.	
\end{proof}

Note also that $\sum_{j> \alpha , \beta} Q^*_{j \to \alpha} Q_{\beta \to j}$ can be bounded by $\sum_j |Q_{\alpha \to j}|^2$, by Cauchy--Schwarz inequality.

\section{Estimates for Donaldson's functional} \label{unifestimsect}

We establish several technical estimates, involving the renormalised Fubini--Study metrics and the renormalised Quot-scheme limit. Much of their proofs is elementary, but we shall also need to use the resolution of singularities for saturated subsheaves, as explained in \cite{Jacob,Sibley}.

\begin{prop}\label{sumprop}
 Let $\{ h_{\sigma_{t}} \}_{t \ge 0}$ be a path of Fubini--Study metrics emanating from $h_k$ generated by $\zeta \in \mathfrak{sl} (H^0(\mathcal{E}(k))^{\vee})$. Then there exists a constant $c_1 (h_k ) >0$ depending only on $h_k$ $($and $k)$, and a constant $c_2 (\zeta , h_k) >0$ depending on $h_k$ and $\zeta$ $($and $k)$, such that
	\begin{equation*}
		- c_1 (h_k )  \le \mathcal{M}^{\mathrm{Don}} (h_{\sigma_t} , h_k) + \sum_{i =1}^{\nu} 2 w_{i} \left( \deg (\mathcal{E}'_{-w_{i}}) - \mu(\mathcal{E}) \mathrm{rk} (\mathcal{E}'_{-w_{i}}) \right) t \le c_2 (\zeta , h_k) 
	\end{equation*}
	holds for all $t \ge 0$, where the graded pieces $\mathcal{E}'_{-w_{i}} := \mathcal{E}'_{\le -w_{i}} / \mathcal{E}'_{\le -w_{i-1}}$ are defined as in Lemma \ref{lemcdquot} by using the filtration $0 \neq \mathcal{E}'_{\le - w_{1}} \subset \cdots \subset \mathcal{E}'_{\le - w_{\nu}} = \mathcal{E}$.
\end{prop}

The entire section is devoted to the proof of the above proposition. As the proof is rather involved, the overall strategy is summarised in Section \ref{scstpfpsp}, which is preceded by some preliminary results in Section \ref{scprmcw}.

\subsection{Preliminaries} \label{scprmcw}

We start with some preliminary results that are necessary for the proof of Proposition \ref{sumprop}.

\begin{lem} \label{evibarprime}
	Writing $h_{\sigma_{t}} = e^{w t} \hat{h}_{\sigma_{t}} e^{w t}$ by using the renormalised Fubini--Study metric $\hat{h}_{\sigma_{t}}$ $($as in Definition \ref{defrenbgops}$\,)$, we have
	\begin{equation*}
		h_{\sigma_{t}}^{-1} \partial_t h_{\sigma_t} = e^{-w t} \left( 2 w + \hat{h}_{\sigma_t}^{-1} [w , \hat{h}_{\sigma_t} ] + \hat{h}_{\sigma_t}^{-1} \partial_t \hat{h}_{\sigma_t} \right) e^{w t} ,
	\end{equation*}
	where $w = \mathrm{diag}(w_{\hat{1}} , \dots , w_{\hat{\nu}})$ and $[\cdot ,\cdot]$ is the commutator.
\end{lem}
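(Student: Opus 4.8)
The plan is to prove the identity by directly differentiating the factorisation $h_{\sigma_t} = e^{wt}\hat{h}_{\sigma_t}e^{wt}$, using only that $w = \mathrm{diag}(w_{\hat{1}},\dots,w_{\hat{\nu}})$ is independent of $t$ and commutes with $e^{wt}$. First I would record the two elementary consequences of $w$ being a fixed diagonal matrix: namely $\partial_t e^{wt} = we^{wt} = e^{wt}w$, and $e^{-wt}we^{wt} = w$. These are the only structural inputs; everything else is the product rule together with the non-commutativity of $\hat{h}_{\sigma_t}$ and $w$.

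Next I would differentiate. Applying the product rule to $h_{\sigma_t} = e^{wt}\hat{h}_{\sigma_t}e^{wt}$ and using $e^{wt}w = we^{wt}$ gives
\[
	\partial_t h_{\sigma_t} = we^{wt}\hat{h}_{\sigma_t}e^{wt} + e^{wt}(\partial_t\hat{h}_{\sigma_t})e^{wt} + e^{wt}\hat{h}_{\sigma_t}we^{wt}.
\]
Since $h_{\sigma_t}^{-1} = e^{-wt}\hat{h}_{\sigma_t}^{-1}e^{-wt}$, I would left-multiply and simplify each of the three resulting terms using $e^{-wt}we^{wt}=w$ and $e^{-wt}e^{wt}=\Id$, collecting everything into the form $e^{-wt}(\,\cdot\,)e^{wt}$. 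The three terms then become $\hat{h}_{\sigma_t}^{-1}w\hat{h}_{\sigma_t}$, $\hat{h}_{\sigma_t}^{-1}\partial_t\hat{h}_{\sigma_t}$, and $w$ respectively, so that
\[
	h_{\sigma_t}^{-1}\partial_t h_{\sigma_t} = e^{-wt}\left( \hat{h}_{\sigma_t}^{-1}w\hat{h}_{\sigma_t} + w + \hat{h}_{\sigma_t}^{-1}\partial_t\hat{h}_{\sigma_t} \right)e^{wt}.
\]

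Finally I would rewrite the sum of the first two bracketed terms by means of the commutator identity $\hat{h}_{\sigma_t}^{-1}[w,\hat{h}_{\sigma_t}] = \hat{h}_{\sigma_t}^{-1}w\hat{h}_{\sigma_t} - w$, whence $\hat{h}_{\sigma_t}^{-1}w\hat{h}_{\sigma_t} + w = 2w + \hat{h}_{\sigma_t}^{-1}[w,\hat{h}_{\sigma_t}]$, yielding the claimed formula. There is no substantive obstacle here; the only point requiring care is that $w$ and $\hat{h}_{\sigma_t}$ do not commute, which is exactly what produces the commutator term: had they commuted, the expression would collapse to the familiar $2w + \hat{h}_{\sigma_t}^{-1}\partial_t\hat{h}_{\sigma_t}$.
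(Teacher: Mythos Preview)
Your proof is correct and follows exactly the same approach as the paper's own proof: differentiate the factorisation $h_{\sigma_t} = e^{wt}\hat{h}_{\sigma_t}e^{wt}$ by the product rule, then left-multiply by $h_{\sigma_t}^{-1} = e^{-wt}\hat{h}_{\sigma_t}^{-1}e^{-wt}$ and regroup using the commutator. The paper's version is in fact terser than yours, recording only the formula for $\partial_t h_{\sigma_t}$ and declaring the rest immediate.
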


\begin{proof}
We can write $h_{\sigma_t} = e^{w t} \hat{h}_{\sigma_t} e^{w t}$ by using the renormalised Fubini--Study metric $\hat{h}_{\sigma_t}$, and note
\begin{equation*}
	\frac{d}{dt} h_{\sigma_t} = e^{w t} \left( w \hat{h}_{\sigma_t} + \frac{d}{dt} \hat{h}_{\sigma_t} +\hat{h}_{\sigma_t} w \right) e^{w t},
\end{equation*}
which immediately gives us the formula that we claimed.
\end{proof}

Note that the connection 1-form of $h_{\sigma_t}$, which is $h_{\sigma_t}^{-1} \partial h_{\sigma_t}$, can be evaluated as $e^{-w t} (\hat{h}_{\sigma_t}^{-1} \partial \hat{h}_{\sigma_t} )e^{w t}$. This immediately implies that, writing $F (\hat{h}_{\sigma_t})$ for ($\ai /2 \pi$ times) the curvature of $\hat{h}_{\sigma_t}$, we have the following pointwise equality over $X^{\mathrm{reg}}$:
\begin{equation*}
	F (h_{\sigma_t}) = e^{-w t} F (\hat{h}_{\sigma_t}) e^{w t}.
\end{equation*}

We thus get the following result.

\begin{lem}\label{lm1}
	Writing $\hat{h}_{\sigma_t}$ for the renormalised Fubini--Study metric, we have
	\begin{align*}
		\mathcal{M}^{\mathrm{Don}}(h_{\sigma_t } , h_k) = &- \int_0^t d \underline{t} \int_{X^{\mathrm{reg}}} \tr \left( (2 w + \hat{h}_{\sigma_{\underline{t}}}^{-1} [w , \hat{h}_{\sigma_{\underline{t}}} ] )(F(\hat{h}_{\sigma_{\underline{t}}}) - \mu(E) \mathrm{Id}_E) \right) \frac{\omega^n}{n!} \\
		&+ \mathcal{M}^{\mathrm{Don}}(\hat{h}_{\sigma_t} , h_k).
	\end{align*}
\end{lem}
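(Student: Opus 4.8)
The plan is to obtain the identity by differentiating $\mathcal{M}^{Don}(h_{\sigma_t},h_k)$ in $t$, rewriting everything through the renormalised metric $\hat h_{\sigma_t}$, and integrating back up; note that a direct appeal to the cocycle property (\ref{cocyclemdon}) through the constant-gauge path $\tau\mapsto e^{w\tau}\hat h_{\sigma_t}e^{w\tau}$ produces $t$ times the time-$t$ integrand rather than the $\int_0^t ds$ shape we want, so differentiation in $t$ is the correct route. Since $\sigma_0=\mathrm{Id}$ and $e^{-w\cdot 0}=\mathrm{Id}$, we have $h_{\sigma_0}=\hat h_{\sigma_0}=Q^*Q=h_k$, so both $\mathcal{M}^{Don}(h_{\sigma_t},h_k)$ and $\mathcal{M}^{Don}(\hat h_{\sigma_t},h_k)$ vanish at $t=0$; it therefore suffices to match their $t$-derivatives against the claimed integrand and integrate.

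First I would apply Lemma \ref{derivmdon} to the path $\{h_{\sigma_t}\}$, which expresses $\frac{d}{dt}\mathcal{M}^{Don}(h_{\sigma_t},h_k)$ as the integral over $X$ of $\tr$ of $h_{\sigma_t}^{-1}\partial_t h_{\sigma_t}$ against the deviation of $\Lambda_\omega F(h_{\sigma_t})$ from the Einstein term. Into this I substitute the two conjugation identities already at hand: Lemma \ref{evibarprime}, giving $h_{\sigma_t}^{-1}\partial_t h_{\sigma_t}=e^{-wt}(2w+\hat h_{\sigma_t}^{-1}[w,\hat h_{\sigma_t}]+\hat h_{\sigma_t}^{-1}\partial_t\hat h_{\sigma_t})e^{wt}$, and the pointwise curvature identity $F(h_{\sigma_t})=e^{-wt}F(\hat h_{\sigma_t})e^{wt}$ displayed after it, valid on $X^{reg}$ because $e^{wt}$ is an $x$-independent gauge transformation. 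As $\mathrm{Id}_E$ is central it is conjugated trivially; the inner $e^{wt}e^{-wt}$ then collapses and, by cyclicity of the trace, the outer $e^{-wt}$ and $e^{wt}$ cancel. This reduces the integrand, pointwise on $X^{reg}$, to the trace of $(2w+\hat h_{\sigma_t}^{-1}[w,\hat h_{\sigma_t}]+\hat h_{\sigma_t}^{-1}\partial_t\hat h_{\sigma_t})$ against the same deviation but now for $\hat h_{\sigma_t}$.

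Next I would split this integrand into the contribution of $2w+\hat h_{\sigma_t}^{-1}[w,\hat h_{\sigma_t}]$ and that of $\hat h_{\sigma_t}^{-1}\partial_t\hat h_{\sigma_t}$. By Lemma \ref{derivmdon} applied now to the path $\{\hat h_{\sigma_t}\}$ (which also starts at $h_k$), the second contribution is precisely $\frac{d}{dt}\mathcal{M}^{Don}(\hat h_{\sigma_t},h_k)$. Subtracting it and integrating in $t$ over $[0,t]$, using the vanishing at $t=0$, then gives $\mathcal{M}^{Don}(h_{\sigma_t},h_k)-\mathcal{M}^{Don}(\hat h_{\sigma_t},h_k)=-\int_0^t\int_{X^{reg}}\tr((2w+\hat h_{\sigma_s}^{-1}[w,\hat h_{\sigma_s}])(F(\hat h_{\sigma_s})-\mu(E)\mathrm{Id}_E))\frac{\omega^n}{n!}\,ds$, which is the asserted formula; the sign and the exact shape of the Einstein term are read off against the conventions of Lemma \ref{derivmdon}, with $F(\hat h_{\sigma_t})$ denoting the contracted curvature.

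The hard part will be the analysis on $X^{reg}$, not the algebra. By Lemma \ref{pdxreghath} the renormalised limit $\hat h_{\sigma_t}$ is positive definite only on $X^{reg}$ and degenerates along $X\setminus X^{reg}$, so $\hat h_{\sigma_t}^{-1}$, the curvature $F(\hat h_{\sigma_t})$, and $\hat h_{\sigma_t}^{-1}\partial_t\hat h_{\sigma_t}$ can all blow up there. The complement $X\setminus X^{reg}$ has codimension at least $1$, hence measure zero, so $\int_X$ and $\int_{X^{reg}}$ coincide for the globally smooth integrand coming from $h_{\sigma_t}$; the delicate point is that the \emph{splitting} into the two separate pieces, and the application of Lemma \ref{derivmdon} to the singular path $\{\hat h_{\sigma_t}\}$, are legitimate only once each piece is separately integrable and $\mathcal{M}^{Don}(\hat h_{\sigma_t},h_k)$ is known to be finite. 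Controlling these quantities near the singular locus is exactly where the resolution of singularities for saturated subsheaves is needed, and I expect this to be the step demanding the most care (carried out in tandem with the companion estimates, e.g.\ Lemma \ref{lemdtmd}).
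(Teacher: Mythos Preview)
Your approach is the same one the paper takes: it states Lemma~\ref{lm1} as an immediate consequence of Lemma~\ref{evibarprime} together with the curvature identity $F(h_{\sigma_t})=e^{-wt}F(\hat h_{\sigma_t})e^{wt}$, plugged into the variational formula of Lemma~\ref{derivmdon} and split into the $2w+\hat h^{-1}[w,\hat h]$ piece and the $\hat h^{-1}\partial_t\hat h$ piece. No separate proof is written out.

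One point deserves attention: your derivation, as written, actually produces a \emph{plus} sign on the right-hand side, not the minus sign you record. The sentence ``the sign \ldots\ is read off against the conventions of Lemma~\ref{derivmdon}'' does not account for this, because Lemma~\ref{derivmdon} by itself gives $+$. The minus sign comes from the metric-duality convention explained in Remark~\ref{remmdsign}: writing $h_{\sigma_t}=e^{wt}\hat h_{\sigma_t}e^{wt}$ tacitly uses the $C^\infty$-isomorphism $\overline{E^\vee}\isom\bigoplus_\alpha E_{-w_\alpha}$, so one is really computing the Donaldson functional on $E^\vee$, which is $-1$ times the usual one. That is the missing justification for the sign flip.

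Your final paragraph about integrability near $X\setminus X^{reg}$ is well taken but is not part of the proof of this lemma in the paper; Lemma~\ref{lm1} is treated as a formal identity on $X^{reg}$, and the finiteness of the individual pieces is established separately in Lemma~\ref{lembdscfdmfm} and Proposition~\ref{proplb}.
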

\begin{rem}
 Note that by the cocycle property (\ref{cocyclemdon}), the same result holds if we replace the metric $h_k$ by any Hermitian metric on $E$.
\end{rem}

\begin{rem} \label{remmdsign} 
	An important remark about the sign is in order. We considered the renormalised Bergman 1-PS as a Hermitian metric on $E$ by means of the $C^{\infty}$-isomorphism $\overline{E^{\vee}} \isom \bigoplus_{\alpha = \hat{1}}^{\hat{\nu}} E_{- w_{\alpha}}$ which involves the metric dual. In other words, the (renormalised) Bergman 1-PS is a priori a Hermitian metric on $E^{\vee}$, which is identified with the one on $E$ by means of the metric dual. Although $h_{\sigma_t}$ is a Hermitian metric on $E$, when we write it as $h_{\sigma_t} = e^{wt} \hat{h}_{\sigma_t} e^{wt}$ as we do here, it is tacitly assumed that $E$ is identified with $\overline{E^{\vee}}$ with the metric duality isomorphism as in the diagram (\ref{cdiamqd}).
	
	With this metric duality isomorphism understood, we should consider $h_{\sigma_t} = e^{wt} \hat{h}_{\sigma_t} e^{wt}$ as a Hermitian metric on $E^{\vee}$, and hence we should consider the Donaldson functional for $E^{\vee}$, which is nothing but $-1$ times the usual definition. Thus, given $h_{\sigma_t} \in \mathcal{H}_{\infty}(\mathcal{E})$, we apply the metric duality isomorphism $\mathcal{H}_{\infty}(\mathcal{E}) \isom \mathcal{H}_{\infty}(\mathcal{E}^{\vee})$, $h_{\sigma_t} \mapsto e^{wt} \hat{h}_{\sigma_t}e^{wt} =: h^{\vee}_{\sigma_t}$ to get
	\begin{align*}
		\mathcal{M}^{\mathrm{Don}}(e^{wt} \hat{h}_{\sigma_t}e^{wt} , h_k) &= \mathcal{M}^{\mathrm{Don}}(h^{\vee}_{\sigma_t}, h_k) \\
		&=-\int_0^t d \underline{t} \int_X \tr((h^{\vee}_{\sigma_{\underline{t}}})^{-1} \partial_{\underline{t}} h^{\vee}_{\sigma_{\underline{t}}} (\Lambda_{\omega} F(h^{\vee}_{\sigma_{\underline{t}}}) - \mu(\mathcal{E}) \Id_E)) \frac{\omega^n}{n!},
	\end{align*}
	in which $F(h^{\vee}_{\sigma_t})$ is meant to be the curvature of $\mathcal{E}$, defined by composing $h^{\vee}_{\sigma_t} = e^{wt} \hat{h}_{\sigma_t}e^{wt}$ with the metric duality isomorphism. We shall not make distinctions between $h_{\sigma_t}$ and $h^{\vee}_{\sigma_t} = e^{wt} \hat{h}_{\sigma_t}e^{wt}$ in what follows to avoid further complication in the notational convention, but it is important to note that the sign in Lemma \ref{lm1} is consistent with the metric duality isomorphism that is implicit in this convention. Finally, it is perhaps worth pointing out that $\mathcal{E}$ admits a Hermitian--Einstein metric if and only if $\mathcal{E}^{\vee}$ does, and that subbundles of $\mathcal{E}$ correspond to the quotient bundles of $\mathcal{E}^{\vee}$.
\end{rem}

Lemma \ref{lm1} indicates an interesting role played by the renormalised Fubini--Study metric $\hat{h}_{\sigma_t}$. Indeed, Proposition \ref{sumprop} will be proved by bounding each terms in Lemma \ref{lm1} individually as we discuss in Section \ref{scstpfpsp}. Before we start evaluating these terms, we recall the following important theorem that will also be used in the proof of Proposition \ref{sumprop}.

\begin{thm}[Chern--Weil formula \cite{Sibley,SimpYM}] \label{thmchnwl}
	Let $\mathcal{S}$ be a saturated subsheaf of a holomorphic vector bundle $\mathcal{E}$ endowed with a Hermitian metric $h$. Then we have
	\begin{equation*}
		\int_X \tr (F(h) |_{\mathcal{S}}) \frac{\omega^{n-1}}{(n-1)!} = \deg ( \mathcal{S}) + \int_X |\mathrm{II}(h) |^2 \frac{\omega^{n-1}}{(n-1)!},
	\end{equation*}
	where $\mathrm{II}(h)$ is the second fundamental form defined by $h$ associated to $\mathcal{S} \subset \mathcal{E}$.
\end{thm}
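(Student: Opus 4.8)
The plan is to reduce to the smooth Gauss--Codazzi identity on the locus where $\mathcal{S}$ is a genuine subbundle, and then to control the behaviour across the singular set by resolution of singularities. Since $\mathcal{S}$ is saturated in the locally free sheaf $\mathcal{E}$, it is reflexive and $\mathcal{E}/\mathcal{S}$ is torsion-free; by Lemma \ref{lemcdrefshf} both are locally free outside a Zariski-closed set $Z \subset X$ of codimension at least $2$. On $U := X \setminus Z$ the sheaf $\mathcal{S}$ is a holomorphic subbundle of $\mathcal{E}$, the $h$-orthogonal projection of $E$ onto $\mathcal{S}$ is smooth, and the second fundamental form $\mathrm{II}(h)$ is the $(1,0)$-form valued in $\Hom(\mathcal{S}, \mathcal{E}/\mathcal{S})$ extracted from the off-diagonal block of the Chern connection of $h$. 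The standard Gauss--Codazzi computation on $U$ gives the pointwise identity
\begin{equation*}
\ai \Lambda_\omega \tr(F(h)|_{\mathcal{S}}) = \ai \Lambda_\omega \tr(F_{\mathcal{S}}(h)) + |\mathrm{II}(h)|^2,
\end{equation*}
where $F_{\mathcal{S}}(h)$ is the curvature of the Chern connection of the induced metric on $\mathcal{S}|_U$ and $|\mathrm{II}(h)|^2 \ge 0$. Integrating against $\omega^n/n!$ over $U$, the asserted formula reduces to the two claims (i) the integrals converge, and (ii) $\int_U \tr(F_{\mathcal{S}}(h)) \, \omega^{n-1}/(n-1)! = \deg(\mathcal{S})$, i.e.~no degree mass is lost at $Z$.

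To establish (i) and (ii) I would pass to a resolution. Following \cite{Jacob,Sibley}, apply a finite sequence of blow-ups $p : \widehat{X} \to X$ with smooth centres lying over $Z$, which is an isomorphism over $U$ and such that the saturation $\widehat{\mathcal{S}}$ of the image of $p^* \mathcal{S} \to p^*\mathcal{E}$ modulo torsion is a holomorphic subbundle of $p^* \mathcal{E}$ on all of $\widehat{X}$. The metric $h$ pulls back to a metric on $p^*\mathcal{E}$, inducing a metric on $\widehat{\mathcal{S}}$ that coincides with the original induced metric over $p^{-1}(U) \cong U$; its second fundamental form and curvature restrict to those of $h$ on $U$.

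On $\widehat{X}$ the subbundle $\widehat{\mathcal{S}}$ is honest, so the smooth Chern--Weil theory applies directly --- but $p^*\omega$ is only semipositive, so I would fix a K\"ahler form $\eta$ on $\widehat{X}$ and work with $\omega_\varepsilon := p^*\omega + \varepsilon \eta$. For each $\varepsilon > 0$ the smooth Gauss--Codazzi identity integrates to the Chern--Weil formula on $\widehat{X}$ for $(\widehat{\mathcal{S}}, \omega_\varepsilon)$; letting $\varepsilon \to 0$, the degree $\int_{\widehat{X}} \tr(F_{\widehat{\mathcal{S}}}) \, \omega_\varepsilon^{n-1}/(n-1)!$ converges to the intersection number computed against $p^*\omega$, which equals $\deg(\mathcal{S})$ because degrees are preserved under the birational modification $p$ (the exceptional divisors integrate to zero against $(p^*\omega)^{n-1}$ by the projection formula). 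Since $p$ is an isomorphism over $U$ and $Z$ is a null set, the curvature and $|\mathrm{II}|^2$ integrals over $\widehat{X}$ agree with those over $U = X \setminus Z$, yielding both (i) and (ii).

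The main obstacle is precisely the analytic control of the $\varepsilon \to 0$ limit over the exceptional locus: one must show that $|\mathrm{II}(h)|^2$ is $\omega$-integrable near $Z$ and that the analytic degree $\int \tr(F_{\widehat{\mathcal{S}}}) \, \omega_\varepsilon^{n-1}/(n-1)!$ converges to the algebraic $\deg(\mathcal{S})$ with no concentration of mass on the exceptional divisor. This is the technical heart of the resolution argument of \cite{Jacob,Sibley}; by contrast the Gauss--Codazzi computation underlying the pointwise identity is entirely elementary and local.
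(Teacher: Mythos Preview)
The paper does not prove this theorem; it is quoted from the literature \cite{Sibley,SimpYM} and used as a black box. So there is no ``paper's own proof'' to compare against. Your outline is essentially the standard argument from those references: Gauss--Codazzi on the locally free locus, then resolution of singularities \`a la Jacob--Sibley to control the singular set and show that no degree mass concentrates there. The sketch is accurate as a roadmap, and you are right to flag the $\varepsilon \to 0$ limit and the $L^1$-integrability of $|\mathrm{II}(h)|^2$ as the genuine analytic content requiring work.
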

In the above, $F(h) |_{\mathcal{S}}$ is meant to be the composition of $F(h)$ with the orthogonal projection $\pi : \mathcal{E} \longrightarrow \mathcal{S}$ defined by $h$. We shall use the following observation concerning this theorem. Suppose that $\mathcal{S}$ is a subsheaf of $\mathcal{E}$ that is not necessarily saturated, and let $\mathcal{S}'$ be its saturation in $\mathcal{E}$. Since $\mathcal{S}$ and $\mathcal{S}'$ agree on a Zariski open subset in $X$, we find
\begin{equation*}
	\int_X \tr (F(h) |_{\mathcal{S}}) \frac{\omega^{n-1}}{(n-1)!} = \int_X \tr (F(h) |_{\mathcal{S}'}) \frac{\omega^{n-1}}{(n-1)!}.
\end{equation*}
Thus, we get
\begin{equation} \label{chernweilnsat}
		\int_X \tr (F(h) |_{\mathcal{S}}) \frac{\omega^{n-1}}{(n-1)!} = \deg ( \mathcal{S}') + \int_X |\mathrm{II}'(h) |^2 \frac{\omega^{n-1}}{(n-1)!},
\end{equation}
where the second fundamental form that appears on the right hand side is defined with respect to $h$ and the saturation $\mathcal{S}'$ of $\mathcal{S}$.

\subsection{Strategy of the proof of Proposition \ref{sumprop}} \label{scstpfpsp}

We decomposed $\mathcal{M}^{\mathrm{Don}}(h_{\sigma_t} , h_k)$ in two terms as in Lemma \ref{lm1}, and we bound each term individually to get Proposition \ref{sumprop}. The first term can be evaluated as follows.

\begin{prop} \label{lemdtmd}
There exists a constant $c_3 (h_k) >0$ which depends on the reference metric $h_k$ $($and $k)$ such that
\begin{align*}
	&- \sum_{i =1}^{\nu} 2 w_{i} \left( \deg (\mathcal{E}'_{-w_{i}}) - \mu(\mathcal{E}) \mathrm{rk} (\mathcal{E}'_{-w_{i}}) \right) t - c_3 (h_k)\\
	&\le- \int_0^t d \underline{t} \int_{X^{\mathrm{reg}}} \tr \left( (2 w + \hat{h}_{\sigma_{\underline{t}}}^{-1} [w , \hat{h}_{\sigma_{\underline{t}}} ] )(\Lambda_{\omega} F(\hat{h}_{\sigma_{\underline{t}}}) - \mu(E) \mathrm{Id}_E) \right) \frac{\omega^n}{n!} \\
	&\le - \sum_{i =1}^{\nu} 2 w_{i} \left( \deg (\mathcal{E}'_{-w_{i}}) - \mu(\mathcal{E}) \mathrm{rk} (\mathcal{E}'_{-w_{i}}) \right) t ,
\end{align*}
uniformly for all $t \ge 0$ and $\zeta \in \mathfrak{sl} (H^0 (\mathcal{E} (k))^{\vee})$, where $\mathcal{E}'_{-w_{i}} := \mathcal{E}'_{\le -w_{i}} / \mathcal{E}'_{\le -w_{i-1}}$ $($cf.~Lemma \ref{lemcdquot}$\,)$.
\end{prop}

The proof of this proposition will be given in Section \ref{lowersec}. Together with the following result that we prove in Section \ref{uppersec}, which bounds the second term in Lemma \ref{lm1}, we get the desired proof of Proposition \ref{sumprop}.


\begin{prop}\label{sumprophat}
 Let $\{ \hat{h}_{\sigma_t} \}_{t \ge 0}$ be a renormalised Bergman 1-PS associated to $\zeta \in \mathfrak{sl}(H^0(\mathcal{E}(k))^{\vee})$. Then there exists a constant $c_4 (h_k) >0$ which depends only on $h_k$ $($and $k)$ and a constant $c_5 (\zeta, h_k)>0$ depending on $h_k$ and $\zeta$ $($and $k)$, such that
	\begin{equation*}
		- c_4 (h_k ) \le  \mathcal{M}^{\mathrm{Don}} (\hat{h}_{\sigma_t}, h_k) \le c_5 (\zeta ,h_k)
	\end{equation*}
	for all $t \ge 0$.
\end{prop}

The rest of the section is devoted to the proof of the above results.

\subsection{Proof of Proposition \ref{lemdtmd}} \label{lowersec}

The proof of Proposition \ref{lemdtmd} relies on the resolution of singularities, which we recall below. Recall that we have a sequence of saturated subsheaves $\mathcal{E}'_{\le - w_{\hat{1}}} \subset \cdots \subset \mathcal{E}'_{\le - w_{\hat{\nu}}}$ of $\mathcal{E}$. We apply the regularisation (or the resolution of singularities) for the saturated sheaves, as in Jacob \cite{Jacob} and Sibley \cite{Sibley}, to reduce to the case of holomorphic subbundles. We closely follow the argument by Jacob \cite[Sections 3 and 4]{Jacob}. After a finite sequence of blowups $\pi : \tilde{X} \to {X}$, we have a holomorphic subbundle $\tilde{\mathcal{E}}'_{\le - w_{\alpha}}$ of $\pi^* \mathcal{E}$ (\emph{i.e.}~$\tilde{\mathcal{E}}'_{\le - w_{\alpha}}$ and $\pi^* \mathcal{E} / \tilde{\mathcal{E}}'_{\le - w_{\alpha}}$ are both locally free) over $\tilde{X}$, such that $\pi_* \tilde{\mathcal{E}}'_{\le - w_{\alpha}} = \mathcal{E}'_{\le - w_{\alpha}}$ (see \cite[Proposition 3]{Jacob} and \cite[Proposition 4.3]{Sibley}). We thus get a filtration of $\pi^* \mathcal{E}$ by holomorphic subbundles over $\tilde{X}$ as 
\begin{equation*}
	0 \neq \tilde{\mathcal{E}}'_{\le - w_{\hat{1}}} \subset \cdots \subset \tilde{\mathcal{E}}'_{\le - w_{\hat{\nu}}} = \pi^* \mathcal{E},
\end{equation*}
and hence a $C^{\infty}$-isomorphism
\begin{equation} \label{dfdccstggt}
	\pi^*E \cong \bigoplus_{\alpha =\hat{1}}^{\hat{\nu}} \tilde{E}'_{\le -w_{\alpha}} / \tilde{E}'_{\le -w_{\alpha-1}}
\end{equation}
of $C^{\infty}$-complex vector bundles over $\tilde{X}$. The point of this isomorphism is that $\pi^* E$ admits a constant gauge transformation by $e^{wt}$ in the notation of (\ref{defewt}), \textit{globally} over $\tilde{X}$.

We now consider the pullback $\pi^* h_{\sigma_t}$ of the Hermitian metric $h_{\sigma_t}$. This defines a metric on 
\[\tilde{E}'_{\le - w_{\alpha}} \oplus \pi^*E / \tilde{E}'_{\le - w_{\alpha}} \cong \pi^* E \quad \text{(as $C^{\infty}$-vector bundles)},\]
as in \cite[Section 2]{Jacob}. As in \cite[Proposition 4]{Jacob}, we construct Hermitian metrics $\tilde{h}_{\sigma_t} |_{\tilde{E}'_{\le - w_{\alpha}}}$ on $\tilde{E}'_{\le - w_{\alpha}}$ and $\tilde{h}_{\sigma_t} |_{\pi^*E / \tilde{E}'_{\le - w_{\alpha}}}$ on $\pi^*E / \tilde{E}'_{\le - w_{\alpha}}$, by removing the factors that vanish (or blow up) on the exceptional divisor of $\pi$. Now we consider the constant gauge transformation by $e^{wt}$ on $\pi^* E$, and hence on $\pi^* h_{\sigma_t}$, given by the $C^{\infty}$-isomorphism (\ref{dfdccstggt}). Since this gauge transformation is an overall constant scaling on each summand in (\ref{dfdccstggt}), the Hermitian metrics
\begin{itemize}
	\item $e^{-wt} \tilde{h}_{\sigma_t} e^{-wt} |_{\tilde{E}'_{\le - w_{\alpha}}}$ on $\tilde{E}'_{\le - w_{\alpha}}$,
	\item $e^{-wt} \tilde{h}_{\sigma_t} e^{-wt} |_{\pi^*E / \tilde{E}'_{\le - w_{\alpha}}}$ on $\pi^*E / \tilde{E}'_{\le - w_{\alpha}}$,
\end{itemize}
are both well defined globally over $\tilde{X}$.

We now repeat the same operation for the renormalised metric $h^{\mathrm{ren}}_{\sigma_t}:=\hat{h}_{\sigma_t}= e^{-wt} h_{\sigma_t} e^{-wt}$, which is only well defined on $X^{\mathrm{reg}}$, to get Hermitian metrics
\begin{itemize}
	\item $\tilde{h}^{\mathrm{ren}}_{\sigma_t}  |_{\tilde{E}'_{\le - w_{\alpha}}}$ on $\tilde{E}'_{\le - w_{\alpha}}$,
	\item $\tilde{h}^{\mathrm{ren}}_{\sigma_t} |_{\pi^*E / \tilde{E}'_{\le - w_{\alpha}}}$ on $\pi^*E / \tilde{E}'_{\le - w_{\alpha}}$,
\end{itemize}
defined over $\pi^{-1} (X^{\mathrm{reg}})$, which is Zariski open in $\tilde{X}$; following the notation as above $\tilde{h}^{\mathrm{ren}}_{\sigma_t}$ perhaps should be written as $\tilde{\hat{h}}_{\sigma_t}$, but we prefer not to use it as it is harder to read. Although these metrics are defined only over $\pi^{-1} (X^{\mathrm{reg}})$, they clearly agree with $e^{-wt} \tilde{h}_{\sigma_t} e^{-wt} |_{\tilde{E}'_{\le - w_{\alpha}}}$ or $e^{-wt} \tilde{h}_{\sigma_t} e^{-wt} |_{\pi^*E / \tilde{E}'_{\le - w_{\alpha}}}$ over $\pi^{-1} (X^{\mathrm{reg}})$.

We summarise the conclusion of the above discussion in the following lemma.

\begin{lem} \label{lmrhmexgltx}
	With the notation as above, the Hermitian metric $\tilde{h}^{\mathrm{ren}}_{\sigma_t}$ extends as a smooth Hermitian metric on $\pi^*E$, globally over $\tilde{X}$.
\end{lem}

We prove in Lemma \ref{eqcwfxrg} that an analogue of the Chern--Weil formula applies to the above metric $\tilde{h}^{\mathrm{ren}}_{\sigma_t}$. We start by recalling that the $L^2$-norm of the second fundamental form remains unchanged under the regularisation process.

\begin{lem}[Jacob \cite{Jacob}] \label{fact1}
Suppose that we write $\mathrm{II}'_\alpha(\tilde{h}^{\mathrm{ren}}_{\sigma_t})$ for the second fundamental form defined by the above metric $\tilde{h}^{\mathrm{ren}}_{\sigma_t}$ on $\pi^*E$ associated to  $\tilde{\mathcal{E}}'_{\le - w_{\alpha}}\subset \pi^* \mathcal{E}$.
 Then we have
 \begin{equation} \label{int2ndFunda}
 \int_{\pi^{-1}(X^{\mathrm{reg}})}\vert \mathrm{II}'_\alpha(\tilde{h}^{\mathrm{ren}}_{\sigma_t})\vert^2\frac{\pi^*\omega^{n-1}}{(n-1)!}=\int_{X^{\mathrm{reg}}} \vert \mathrm{II}'_\alpha (\hat{h}_{\sigma_t})\vert^2\frac{\omega^{n-1}}{(n-1)!}<+\infty.
 \end{equation}
\end{lem}

\begin{proof}[Sketch of the proof]
	We only provide a sketch of the proof as the details can be found in \cite[Proofs of Propositions 1 and 4, Lemma 1]{Jacob}.
	
	Let us write $p$ (respectively $\tilde{p}$) for the orthogonal projection to $\mathcal{E}'_{\le - w_{\alpha}}$ with respect to $\hat{h}_{\sigma_t}$ (respectively 
	to $\tilde{\mathcal{E}}'_{\le - w_{\alpha}}$ and with respect to $\tilde{h}^{\mathrm{ren}}_{\sigma_t}$). Arguing as in \cite[Proof of Proposition 1, Equation (2.6)]{Jacob}, we find by expressing the second fundamental form in terms of the curvatures of $E$ and $\mathcal{E}'_{\le - w_{\alpha}}$,
	\begin{align}
	\int_{X^{\mathrm{reg}}} \vert \mathrm{II}'_\alpha (\hat{h}_{\sigma_t})\vert^2\frac{\omega^{n-1}}{(n-1)!} =& \int_{\pi^{-1} (X^{\mathrm{reg}})} \pi^* \vert \mathrm{II}'_\alpha (\hat{h}_{\sigma_t})\vert^2 \wedge \frac{\pi^*\omega^{n-1}}{(n-1)!} \nonumber\\
	=&\int_{\pi^{-1} (X^{\mathrm{reg}})} \pi^*\tr(p\circ {F_{\hat{h}_{\sigma_t}}})\wedge \frac{\pi^*\omega^{n-1}}{(n-1)!} \nonumber \\
	&\quad -\int_{\pi^{-1} (X^{\mathrm{reg}})} \pi^*\tr(F_{\hat{h}_{\sigma_t}})\wedge \frac{\pi^*\omega^{n-1}}{(n-1)!}.\label{SdForm1}
	\end{align}
	If $z$ defines local holomorphic coordinates and $\{z=0\}$ corresponds locally to the exceptional divisor associated to a blow up of the regularisation process, one can express locally the degeneracy of the involved metrics on the bundles. This shows two facts. Firstly, one can identify the two projections $p$ and $\tilde{p}$ via the pull-back map. Secondly, 
	one can express the curvature of the metrics and obtain the following equality in the sense of currents 
	$$\pi^* \tr(F_{\hat{h}_{\sigma_t}})=\tr(F_{\tilde{h}^{\mathrm{ren}}_{\sigma_t}})+\sum_{i} a_i \ddbar \log \vert z\vert^2,$$
	for some non-negative integers $a_i$. \\
	Note that the  term $\int_{\tilde{X}} \sum_{i} a_i \ddbar \log \vert z\vert^2 \wedge \pi^*\omega^{n-1}$ vanishes. Consequently we get from \eqref{SdForm1}, 
	\begin{align*}
	\int_{X^{\mathrm{reg}}} \vert \mathrm{II}'_\alpha (\hat{h}_{\sigma_t})\vert^2\frac{\omega^{n-1}}{(n-1)!} 
	=& \int_{\pi^{-1}(X^{\mathrm{reg}})} \tr(\tilde{p}\circ {F_{\tilde{h}^{\mathrm{ren}}_{\sigma_t}}})\wedge \frac{\pi^*\omega^{n-1}}{(n-1)!}\\
	&-\int_{\pi^{-1}(X^{\mathrm{reg}})} \left(\tr(F_{\tilde{h}^{\mathrm{ren}}_{\sigma_t}})+\sum_{i} a_i \ddbar \log \vert z\vert^2\right) \wedge \frac{\pi^*\omega^{n-1}}{(n-1)!}\\
		=&\int_{\pi^{-1}(X^{\mathrm{reg}})} \tr(\tilde{p}\circ {F_{\tilde{h}^{\mathrm{ren}}_{\sigma_t}}})\wedge \frac{\pi^*\omega^{n-1}}{(n-1)!} -\int_{\pi^{-1}(X^{\mathrm{reg}})} \tr(F_{\tilde{h}^{\mathrm{ren}}_{\sigma_t}}) \wedge \frac{\pi^*\omega^{n-1}}{(n-1)!}\\
	=&\int_{\pi^{-1} (X^{\mathrm{reg}})}\vert \mathrm{II}'_\alpha(\tilde{h}^{\mathrm{ren}}_{\sigma_t})\vert^2\frac{\pi^*\omega^{n-1}}{(n-1)!}.
	\end{align*}
	By Lemma \ref{lmrhmexgltx} $\tilde{h}^{\mathrm{ren}}_{\sigma_t}$ extends as a smooth Hermitian metric globally over $\tilde{X}$ which is compact, we get the desired upper bound for the inequality in \eqref{int2ndFunda}.
\end{proof}

\begin{lem} \label{eqcwfxrg}
With the notation as above, we have
 \begin{equation*}
		\int_{X^{\mathrm{reg}}} \tr (F(\hat{h}_{\sigma_t} ) |_{E'_{\le - w_{\alpha}}}) \frac{\omega^{n-1}}{(n-1)!} = \deg (\mathcal{E}'_{\le - w_{\alpha}}) + \int_{X^{\mathrm{reg}}} |\mathrm{II}_{\alpha}' (\hat{h}_{\sigma_t}) |^2 \frac{\omega^{n-1}}{(n-1)!}.
	\end{equation*}
\end{lem}

The point of the above equation is that the analogue of the Chern--Weil formula (\ref{chernweilnsat}) holds for the metric $\hat{h}_{\sigma_t}$ that is well defined a priori only over $X^{\mathrm{reg}}$.

\begin{proof}
	This is a consequence of Lemma \ref{lmrhmexgltx}: we apply the Chern--Weil formula (Theorem \ref{thmchnwl}) for $\tilde{h}^{\mathrm{ren}}_{\sigma_t}$ on $\pi^* E$, which is globally defined over $\tilde{X}$, and get the claimed result by applying also Lemma \ref{fact1} and \cite[Lemma 2]{Jacob} (which states that the degree remains unchanged under the regularisation, by using a degenerate K\"ahler metric $\pi^* \omega$ on $\tilde{X}$).
\end{proof}

\begin{proof}[Proof of Proposition \ref{lemdtmd}]
	Recall that we have the $C^{\infty}$-isomorphism $E \cong \bigoplus_{\alpha=\hat{1}}^{\hat{\nu}} E_{-w_{\alpha}}$ over $X^{\mathrm{reg}}$. We compose this with another $C^{\infty}$-isomorphism so that the $E_{-w_{\alpha}}$ are pairwise orthogonal with respect to $\hat{h}_{\sigma_t}$. Thus,
\[
		\tr \left( w F(\hat{h}_{\sigma_t} ) \right) = \sum_{\alpha=\hat{1}}^{\hat{\nu}} w_{\alpha} \tr (F(\hat{h}_{\sigma_t} ) |_{E_{-w_{\alpha}}})
		=\sum_{\alpha=\hat{1}}^{\hat{\nu}} w_{\alpha} \left( \tr (F(\hat{h}_{\sigma_t} ) |_{E_{\le - w_{\alpha}}}) - \tr (F(\hat{h}_{\sigma_t} ) |_{E_{\le - w_{\alpha -1}}}) \right).
\]
	Recall now that each sheaf $\mathcal{E}_{\le -w_{\alpha}}$ agrees with its saturation $\mathcal{E}'_{\le -w_{\alpha}}$ over $X^{\mathrm{reg}}$, as they are both holomorphic subbundles over $X^{\mathrm{reg}}$. Lemma \ref{eqcwfxrg} implies
	\begin{equation*}
		\int_{X^{\mathrm{reg}}} \tr (F(\hat{h}_{\sigma_t} ) |_{E_{\le - w_{\alpha}}}) \frac{\omega^{n-1}}{(n-1)!} = \deg (\mathcal{E}'_{\le - w_{\alpha}}) + \int_{X^{\mathrm{reg}}} |\mathrm{II}_{\alpha}' (\hat{h}_{\sigma_t}) |^2 \frac{\omega^{n-1}}{(n-1)!},
	\end{equation*}
	where $\mathrm{II}_{\alpha}' (\hat{h}_{\sigma_t})$ is the second fundamental form associated to the \textit{saturated} subsheaf $\mathcal{E}'_{\le - w_{\alpha}}$ defined with respect to $\hat{h}_{\sigma_t}$. Thus,
\[
-\int_{X^{\mathrm{reg}}} \tr \left( w F(\hat{h}_{\sigma_t} ) \right) \frac{\omega^{n-1}}{(n-1)!} =- \sum_{\alpha=\hat{1}}^{\hat{\nu}} w_{\alpha} \deg (\mathcal{E}'_{-w_{\alpha}}) - \sum_{\alpha=\hat{1}}^{\hat{\nu}} (w_{\alpha -1} - w_{\alpha }) \int_X |\mathrm{II}_{\alpha -1}' (\hat{h}_{\sigma_t}) |^2 \frac{\omega^{n-1}}{(n-1)!}.
\]
	
In Lemma \ref{lembdscfdmfm} that follows, we prove the existence of a positive constant $c_3 (h_k)$ which depends only on the reference metric $h_k$ (and $k$) such that
	\begin{equation} \label{bdscfdmfm}
		0 \le \sum_{\alpha=\hat{1}}^{\hat{\nu}} (w_{\alpha -1} - w_{\alpha }) \int_0^{t} d\underline{t} \int_X |\mathrm{II}_{\alpha -1}' (\hat{h}_{\sigma_{\underline{t}}}) |^2 \frac{\omega^{n-1}}{(n-1)!} \le c_3 (h_k),
	\end{equation}
	uniformly for all $t>0$  and $\zeta \in \mathfrak{sl} (H^0 (\mathcal{E}(k))^{\vee})$.

Similarly, we find
\begin{equation*}
	\int_{X^{\mathrm{reg}}} \tr(w \cdot \mathrm{Id}_E)=\sum_{\alpha=\hat{1}}^{\hat{\nu}} w_\alpha \rk(E_\alpha)=\sum_{\alpha =\hat{1}}^{\hat{\nu}} w_{\alpha} \rk(\mathcal{E}'_{\alpha}),
\end{equation*}
by noting that saturation does not change the rank.

Finally, since $\bigoplus_{\alpha=1}^{\hat{\nu}} E_{-w_{\alpha}}$ is assumed to be orthogonal with respect to $\hat{h}_{\sigma_t}$, by our choice of $C^{\infty}$-isomorphism, $w$ must commute with $\hat{h}_{\sigma_t}$, \emph{i.e.}~$[w , \hat{h}_{\sigma_t} ] =0$. Recalling (\ref{bdscfdmfm}), we thus get
\begin{align*}
	&-\sum_{\alpha =\hat{1}}^{\hat{\nu}} 2 w_{\alpha} \left( \deg (\mathcal{E}'_{-w_{\alpha}}) - \mu(\mathcal{E}) \mathrm{rk} (\mathcal{E}'_{-w_{\alpha}}) \right) t - c_3 (h_k) \\
	&\le - \int_0^t d\underline{t} \int_{X^{\mathrm{reg}}} \tr \left( (2 w + \hat{h}_{\sigma_{\underline{t}}}^{-1} [w , \hat{h}_{\sigma_{\underline{t}}} ] )(\Lambda_{\omega} F(\hat{h}_{\sigma_{\underline{t}}}) - \mu(E) \mathrm{Id}_E) \right) \frac{\omega^n}{n!} \\
	&\le - \sum_{\alpha =\hat{1}}^{\hat{\nu}} 2 w_{\alpha} \left( \deg (\mathcal{E}'_{-w_{\alpha}}) - \mu(\mathcal{E}) \mathrm{rk} (\mathcal{E}'_{-w_{\alpha}}) \right) t .
\end{align*}
Recall from Lemma \ref{lemcdquot} that each $\mathcal{E}'_{-w_i}$ is torsion-free or zero for each $i=1 , \dots , \nu$, and that $\mathrm{rk}(\mathcal{E}'_{-w_i})>0$ if and only if $i \in \{ \hat{1} , \dots , \hat{\nu}\}$. This means that $\mathcal{E}'_{-w_i}$ is simply zero for all $i \notin \{ \hat{1} , \dots , \hat{\nu}\}$, and hence $\deg(\mathcal{E}'_{-w_i})=0$ if $i \notin \{ \hat{1} , \dots , \hat{\nu}\}$. We thus get
\begin{equation*}
	\sum_{\alpha =\hat{1}}^{\hat{\nu}} 2 w_{\alpha} \left( \deg (\mathcal{E}'_{-w_{\alpha}}) - \mu(\mathcal{E}) \mathrm{rk} (\mathcal{E}'_{-w_{\alpha}}) \right) = \sum_{i =1}^{\nu} 2 w_{i} \left( \deg (\mathcal{E}'_{-w_{i}}) - \mu(\mathcal{E}) \mathrm{rk} (\mathcal{E}'_{-w_{i}}) \right),
\end{equation*}
as required. Thus, granted (\ref{bdscfdmfm}) that we prove below, we complete the proof of Proposition \ref{lemdtmd}.
\end{proof}

We now prove (\ref{bdscfdmfm}).

\begin{lem} \label{lembdscfdmfm}
	Using the notation above, there exists a constant $c_3 (h_k) >0$ which depends only the reference metric $h_k$ $($and $k)$ such that
	\begin{equation*}
		0 \le \sum_{\alpha=\hat{1}}^{\hat{\nu}} (w_{\alpha-1} - w_{\alpha}) \int_0^t d\underline{t} \int_{X^{\mathrm{reg}}} |\mathrm{II}_{\alpha -1}' (\hat{h}_{\sigma_{\underline{t}}}) |^2 \frac{\omega^{n-1}}{(n-1)!} \le c_3 (h_k) ,
	\end{equation*}
	uniformly for all $t \ge 0$ and $\zeta \in \mathfrak{sl} (H^0 (\mathcal{E}(k))^{\vee})$ $($with $\Vert \zeta \Vert_{\mathrm{op}} \le 1$, as in Remark \ref{condopnm}$\,)$.
\end{lem}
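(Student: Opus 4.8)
The lower bound is immediate: by the ordering (\ref{ordlambda}) we have $w_{\alpha-1}-w_\alpha>0$, while each $\int_X|\mathrm{II}'_{\alpha-1}(\hat h_{\sigma_t})|^2\frac{\omega^{n-1}}{(n-1)!}$ is manifestly non-negative, so every summand is $\ge 0$. The whole content of the lemma is therefore the uniform upper bound, and the plan is to bound each summand individually by a constant depending only on $h_k$ and $k$, and then to sum over the at most $N_k=\dim H^0(\mathcal E(k))$ indices $\alpha$.

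The key observation is that, in the limit $t\to+\infty$, the filtration splits: with respect to the block decomposition $\overline{E^\vee}\cong\bigoplus_\alpha E_{-w_\alpha}$ of Definition \ref{defrenbgops}, the subbundle attached to $\mathcal E'_{\le-w_{\alpha-1}}$ over $X^{reg}$ is a sub-sum of blocks, and the block-diagonal limit $\hat h=\bigoplus_\alpha\hat h_\alpha$ makes it an orthogonal holomorphic direct summand, so that $\mathrm{II}'_{\alpha-1}(\hat h)=0$; for finite $t$ the second fundamental form is governed entirely by the off-diagonal correction $\bar I_{\sigma_t}$ of Proposition \ref{propexpfsz}. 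I would make this quantitative as follows. The cut separating the blocks $\{1,\dots,\alpha-1\}$ from $\{\alpha,\dots,\hat\nu\}$ meets $\bar I_{\sigma_t}$ only in its off-diagonal $(\gamma,\delta)$-blocks with $\gamma\le\alpha-1<\alpha\le\delta$, and by Proposition \ref{propexpfsz} together with (\ref{ordlambda}) the slowest-decaying such contribution is the one with $(\gamma,\delta)=(\alpha-1,\alpha)$, decaying at rate $e^{-(w_{\alpha-1}-w_\alpha)t}$ (since $w_\gamma-w_\delta\ge w_{\alpha-1}-w_\alpha$ for all admissible $\gamma,\delta$). Thus over $X^{reg}$ one expects a pointwise estimate $|\mathrm{II}'_{\alpha-1}(\hat h_{\sigma_t})|^2\le e^{-2(w_{\alpha-1}-w_\alpha)t}\,\Phi_\alpha$, with $\Phi_\alpha$ assembled from the blocks $Q^*_{j\to\gamma}$ and the fibre metrics $\hat h_\alpha$ and independent of $t$. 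Integrating in time, $\int_0^t e^{-2(w_{\alpha-1}-w_\alpha)s}\,ds\le\tfrac1{2(w_{\alpha-1}-w_\alpha)}$, so the prefactor $w_{\alpha-1}-w_\alpha$ is cancelled exactly and the $\alpha$-th summand is bounded by $\tfrac12\int_X\Phi_\alpha\frac{\omega^{n-1}}{(n-1)!}$, uniformly in $t$. The fact that this is uniform also in $\zeta$ (whose weights, gaps, and filtration length all vary, subject only to $\Vert\zeta\Vert_{op}\le 1$) rests precisely on this matching between the decay rate and the weight prefactor, while the uniform control of the building blocks $|Q^*_{j\to\gamma}|^2\le|Q^*|^2$ is supplied by Lemma \ref{lemqjalpqunif} and the Cauchy--Schwarz remark following it, with $\int_X|Q^*|^2\frac{\omega^{n-1}}{(n-1)!}$ a finite quantity determined by $h_k$ and $k$ alone.

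The main obstacle is that $\Phi_\alpha$ involves the inverse fibre metrics $\hat h_\alpha^{-1}$, and $\hat h$ degenerates as one approaches the singular locus $X\setminus X^{reg}$ (Lemma \ref{pdxreghath} guarantees positivity only on $X^{reg}$); hence the naive pointwise bound $\Phi_\alpha\lesssim|Q^*|^2$ fails near $X\setminus X^{reg}$ and one cannot directly conclude that $\int_X\Phi_\alpha$ is finite, let alone uniformly bounded. To handle this I would recast the estimate through the Chern--Weil identity (\ref{chernweilnsat}) for the saturated subsheaf, writing $\int_X|\mathrm{II}'_{\alpha-1}(\hat h_{\sigma_t})|^2\frac{\omega^{n-1}}{(n-1)!}=\int_X\tr(F(\hat h_{\sigma_t})|_{\mathcal E'_{\le-w_{\alpha-1}}})\frac{\omega^{n-1}}{(n-1)!}-\deg(\mathcal E'_{\le-w_{\alpha-1}})$, and then invoke the resolution of singularities for saturated subsheaves (as in \cite{Jacob,Sibley}) to pull everything back to a smooth model on which $\mathcal E'_{\le-w_{\alpha-1}}$ becomes an honest subbundle. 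On the resolution the exponential decay of Proposition \ref{propexpfsz} persists with the same rate $w_{\alpha-1}-w_\alpha$, the integrals stay finite, and they are controlled in terms of $h_k$ and $k$ by Lemma \ref{lemqjalpqunif}; combining this with the time-integration of the previous paragraph produces the uniform constant $c_1(h_k,k)$. I expect the verification that the rate-$(w_{\alpha-1}-w_\alpha)$ decay survives the pullback uniformly up to the exceptional locus to be the most delicate point.
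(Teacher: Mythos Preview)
Your outline captures the same core ingredients as the paper's proof: the lower bound from the ordering (\ref{ordlambda}), the exponential decay of the off-diagonal part of $\hat h_{\sigma_t}$ from Proposition \ref{propexpfsz} with rate governed by the gap $w_{\alpha-1}-w_\alpha$, the resolution of singularities of \cite{Jacob,Sibley} to handle the non-locally-free case, and the cancellation between the decay rate and the weight prefactor. The paper's Step 1--Step 3 reduction (subbundle $\to$ saturated $\to$ general, each via blowup) is exactly the mechanism you describe for handling the degeneration of $\hat h$ on $X\setminus X^{reg}$.

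The one substantive difference is in how uniformity in $\zeta$ is obtained. You try to get it \emph{directly}: after time-integration the prefactor cancels the decay rate, leaving $\tfrac12\int_X\Phi_\alpha$, which you want bounded by $C(h_k,k)$ via Lemma \ref{lemqjalpqunif}. The obstacle you correctly flag---that $\Phi_\alpha$ contains $\hat h_\alpha^{-1}$, and that the resolution one passes to depends on the filtration, hence on $\zeta$---is real, and you do not actually close it. The paper instead separates the two issues: it first proves only the $\zeta$-\emph{dependent} bound $c_1(h_k,k,\zeta)$ via the resolution argument (your Steps 2--3), and then observes that
\[
\zeta\longmapsto \sum_{\alpha}(w_{\alpha-1}-w_\alpha)\int_0^{+\infty}\!\!\int_X|\mathrm{II}'_{\alpha-1}(\hat h_{\sigma_t})|^2\frac{\omega^{n-1}}{(n-1)!}\,dt
\]
is continuous on the compact ball $\{\Vert\zeta\Vert_{op}\le 1\}\subset\mathfrak{sl}(H^0(\mathcal E(k))^\vee)$, so its supremum is finite. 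The rate-prefactor matching you emphasise is precisely what the paper invokes to check continuity as eigenvalues collide ($w_{\alpha-1}\to w_\alpha$). Thus the paper's continuity-plus-compactness step replaces the uniform control of $\int_X\Phi_\alpha$ that your direct route would require but does not supply.
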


\begin{proof}
	First note that $w_{\alpha} - w_{\alpha -1} < 0$ by (\ref{ordlambda}), which establishes the first inequality. Note also that the integral $\int_0^t d\underline{t} \int_X |\mathrm{II}_{\alpha -1}' (\hat{h}_{\sigma_{\underline{t}}}) |^2 \frac{\omega^{n-1}}{(n-1)!}$ is monotonically increasing in $t$, and hence it suffices to establish the bound for $t= + \infty$.
	
	We start by proving a weaker estimate
	\begin{equation} \label{weakestzetaii}
		 \sum_{\alpha=1}^{\hat{\nu}} (w_{\alpha} - w_{\alpha -1}) \int_0^{+ \infty} dt \int_{X^{\mathrm{reg}}} |\mathrm{II}_{\alpha -1}' (\hat{h}_{\sigma_t}) |^2 \frac{\omega^{n-1}}{(n-1)!} \le  c_3 (h_k,  \zeta)
	\end{equation}
	where the constant also depends on $\zeta \in \mathfrak{sl} (H^0 (\mathcal{E} (k))^{\vee})$. It suffices to show
	\begin{equation} \label{wweakestzetaii}
		\int_0^{+ \infty} dt \int_{X^{\mathrm{reg}}} |\mathrm{II}_{\alpha -1}' (\hat{h}_{\sigma_t}) |^2 \frac{\omega^{n-1}}{(n-1)!} < + \infty
	\end{equation}
	for each $\alpha =\hat{1} , \dots , \hat{\nu}$. Note that, by the monotone convergence theorem, we may exchange the order of the integrals.
	
	\medskip
	Step 1: the case when all the $\mathcal{E}_{\le - w_{i}}$'s are holomorphic subbundles of $\mathcal{E}$. In this case, $X^{\mathrm{reg}} = X$, and it is easy to see that $|\mathrm{II}_{\alpha -1}' (\hat{h}_{\sigma_t}) |^2$ decays exponentially as $t \to + \infty$ with decay rate at least $e^{-|w_{\alpha} - w_{\alpha -1}|t}$, by Proposition \ref{propexpfsz}. Recalling Lemmas \ref{pdxreghath} and \ref{lemqjalpqunif}, this immediately establishes (\ref{weakestzetaii}).
	
	\medskip
	Step 2: the case when all the $\mathcal{E}_{< - w_{i}}$'s are saturated, \emph{i.e.}~$\mathcal{E}_{\le - w_i} = \mathcal{E}'_{\le - w_i}$. We apply the regularisation (or the resolution of singularities) for the saturated sheaves to reduce to the case of holomorphic subbundles, just as we did at the beginning of this section. After a finite sequence of blowups $\pi : \tilde{X} \to {X}$, we have a locally free subsheaf $\tilde{\mathcal{E}}_{\le - w_{\alpha}}$ of $\pi^* \mathcal{E}$ with a locally free quotient $\pi^* \mathcal{E} / \tilde{\mathcal{E}}_{\le - w_{\alpha}}$ over $\tilde{X}$, such that $\pi_* \tilde{\mathcal{E}}_{\le - w_{\alpha}} = \mathcal{E}_{\le - w_{\alpha}}$ (see \cite[Proposition 3]{Jacob} and \cite[Proposition 4.3]{Sibley}). Now, the pullback $\pi^*h^{\mathrm{ren}}_{\sigma_t}$  of the renormalised metric $h^{\mathrm{ren}}_{\sigma_t}:=\hat{h}_{\sigma_t}$ can be regarded as defining the induced metric on  $$\tilde{E}_{\le - w_{\alpha}} \oplus \pi^*E / \tilde{E}_{\le - w_{\alpha}} \cong \pi^* E$$ (as $C^{\infty}$-vector bundles), as in \cite[Section 2]{Jacob}. As we did at the beginning of this section and as in \cite[Proposition 4]{Jacob}, we construct Hermitian metrics
	\begin{itemize}
		\item $\tilde{h}^{\mathrm{ren}}_{\sigma_t} |_{\tilde{E}_{\le - w_{\alpha}}}$ on $\tilde{E}_{\le - w_{\alpha}}$ and
		\item $\tilde{h}^{\mathrm{ren}}_{\sigma_t}|_{\pi^*E / \tilde{E}_{\le - w_{\alpha}}}$ on $\pi^*E / \tilde{E}_{\le - w_{\alpha}}$
	\end{itemize}
	by removing the factors that vanish (or blow up) on the exceptional divisor of $\pi$.

	Recalling Lemma \ref{fact1}, we find the following: during the regularisation process for saturated sheaves, we obtain the metric $\tilde{h}^{\mathrm{ren}}$ without altering the $L^2$-norm of the second fundamental form. Note that we use a degenerate K\"ahler metric on $\tilde{X}$ in \eqref{int2ndFunda} but this does not matter for our main purpose.\\
	We now consider the limit where $t$ tends to $+ \infty$. Since the renormalised metric $\hat{h}$ as in Definition \ref{defrenbgops} may be degenerate on $X \setminus X^{\mathrm{reg}}$, $\tilde{h}^{\mathrm{ren}}$ may develop further degeneracy on the exceptional divisor of $\pi$, as $t \to + \infty$. Recall also that Lemma \ref{cdiamq} implies that the degeneracy of $\hat{h}$ is exactly of the form as described in \cite[equation (3.8), see also Proposition 4]{Jacob}, \emph{i.e.} the metric can be written locally as a smooth Hermitian matrix multiplied by a matrix-valued holomorphic function which can have a zero.  Now after pulling back by $\pi$, we may further remove these factors that vanish on the exceptional divisors, to get a well-defined metric $\tilde{h}$, say, on $\pi^* E \cong \tilde{E}_{\le - w_{\alpha}} \oplus \pi^*E / \tilde{E}_{\le - w_{\alpha}}$. The $L^2$-norm of the second fundamental form of the metric $\tilde{h}$ agrees with the one defined by $\hat{h}$ by the same reasoning as in the  proof of  Lemma \ref{fact1}. But $\hat{h}$ has no off-diagonal block (\emph{cf.}~Proposition \ref{propexpfsz} and Definition \ref{defrenbgops}). Thus, the $L^2$-norm of the second fundamental form of the metric $\tilde{h}$ is zero.
	
We thus get Inequality (\ref{wweakestzetaii}) by recalling that the decay (as $t \to + \infty$) of $|\mathrm{II}_{\alpha -1}' (\hat{h}_{\sigma_t}) |$ at each $x \in X^{\mathrm{reg}}$ is exponential, as we saw in Step 1. Hence, summarising the argument above, after a finite sequence of blowups $\pi : \tilde{X} \to {X}$, we may reduce to the case in Step 1. This establishes (\ref{weakestzetaii}) when all $\mathcal{E}_{\le - w_{i}}$'s are all saturated.

	\medskip
	Step 3: we now consider the general case when the $\mathcal{E}_{\le - w_{i}}$'s are (not necessarily saturated) subsheaves of $\mathcal{E}$. We first apply a sequence of blowups $\pi$ so that $\bigcup_{\alpha=\hat{1}}^{\hat{\nu}} \mathrm{Sing} (\mathcal{E}_{\le - w_{\alpha}}) \cup \mathrm{Sing} (\mathcal{E} / \mathcal{E}_{\le - w_{\alpha}})$, where $\hat{h}_{\sigma_t}$ may be degenerate as $t \to + \infty$, is contained in the union of normal crossing divisors (see \cite[Theorem 4.4]{Sibley}). As in Step 2, we may remove the factors of $\hat{h}$ that vanish on the exceptional divisors of $\pi$, without changing the $L^2$-norm of the second fundamental form at $t = + \infty$ (as in the proof of Lemma \ref{fact1}, which holds for an arbitrary blowup). Note that we do not claim that, by such a sequence of blowups, the subsheaves $\mathcal{E}_{\le - w_{\alpha}}$ can be pulled back to holomorphic subbundles; we only remove the factors that vanish on $\mathrm{Sing} (\mathcal{E}_{\le - w_{\alpha}}) \cup \mathrm{Sing} (\mathcal{E} / \mathcal{E}_{\le - w_{\alpha}})$ that makes $\hat{h}_{\sigma_t}$ degenerate as $t \to + \infty$. We then apply the argument in Step 2 to the saturation $\mathcal{E}'_{\le - w_{\alpha}}$ of $\mathcal{E}_{\le - w_{\alpha}}$, by composing $\pi$ with a further sequence of blowups, where we note that $\mathcal{E}_{\le - w_{\alpha}}$ and its saturation $\mathcal{E}'_{\le - w_{\alpha}}$ differ only on a Zariski closed subset and hence this does not affect the value of the integral (\ref{wweakestzetaii}), and that the second fundamental form in Step 2 is defined with respect to the saturation $\mathcal{E}'_{\le - w_{\alpha}}$. Thus, we can reduce to the case in Step 2, to establish (\ref{weakestzetaii}) in general.

	We finally note that the map defined by
	\begin{equation*}
		\mathfrak{sl} (H^0 (\mathcal{E} (k))^{\vee}) \ni \zeta \mapsto \sum_{\alpha=\hat{1}}^{\hat{\nu}} (w_{\alpha} - w_{\alpha -1}) \int_0^{+ \infty} dt \int_{X^{\mathrm{reg}}} |\mathrm{II}_{\alpha -1}' (\hat{h}_{\sigma_t}) |^2 \frac{\omega^{n-1}}{(n-1)!} \in \mathbb{R}
	\end{equation*}
	is continuous, where we recall that $|\mathrm{II}_{\alpha -1}' (\hat{h}_{\sigma_t}) |^2$ decays exponentially as $t \to + \infty$ at each $p \in X$, with decay rate at least (and in fact faster than) $e^{-|w_{\alpha} - w_{\alpha -1}|t}$ by Proposition \ref{propexpfsz}. (Note in particular that, when we consider the limit of $w_{\alpha -1}$ tending to $w_{\alpha}$, the integral over $t$ of $(w_{\alpha} - w_{\alpha -1}) e^{-|w_{\alpha} - w_{\alpha -1}|t} $ remains bounded.) By recalling $\Vert \zeta \Vert_{\mathrm{op}} \le 1$ (\emph{cf.}~Remark \ref{condopnm}), we obtain that the bound that does not depend on $\zeta$, as claimed.
\end{proof}

\subsection{Proof of Proposition \ref{sumprophat}} \label{uppersec}

We now prove Proposition \ref{sumprophat}, which establishes the required bound for the second term in Lemma \ref{lm1}. We start with the lower bound.

\begin{lem} \label{proplb}
Let $\{ \hat{h}_{\sigma_t} \}_{t \ge 0}$ be a renormalised Bergman 1-PS associated to $\zeta \in \mathfrak{sl}(H^0(\mathcal{E}(k))^{\vee})$. Then there exists a constant $c_4 (h_k ) >0$ which depends only on the reference metric $h_k$ $($and $k)$ such that
	\begin{equation*}
		\mathcal{M}^{\mathrm{Don}} (\hat{h}_{\sigma_t}, h_k) > -c_4 (h_k )
	\end{equation*}
	irrespectively of $t \ge 0$ and $\zeta \in \mathfrak{sl}(H^0(\mathcal{E}(k))^{\vee})$.
\end{lem}

\begin{rem}
	The sign convention concerning the metric duality isomorphism, as mentioned in Remark \ref{remmdsign}, still applies, so $\hat{h}_{\sigma_t}$ is meant to be a Hermitian metric on $\mathcal{E}^{\vee}$.
\end{rem}

\begin{proof}
We first consider the following general situation. Suppose that we have a geodesic segment $\gamma(s):= e^{sv}$ ($0 \le s \le 1$), with $v:= \log hh^{-1}_0$, which connects $h_0$ and $h$, where $h_0$ is a fixed reference metric. The convexity of the Donaldson functional (Proposition \ref{lemmdonconvH}) implies
\begin{equation}\label{rineq}
	\mathcal{M}^{\mathrm{Don}} (h , h_0) \ge  \left.\frac{d}{ds} \right|_{s=0} \mathcal{M}^{\mathrm{Don}} (\gamma (s) , h_0) ,
\end{equation}
where we recall
\begin{equation*}
	\left. \frac{d}{ds} \right|_{s=0} \mathcal{M}^{\mathrm{Don}} (\gamma (s) , h_0) = \int_X \tr \left( v \left( \Lambda_{\omega} F(h_0) - \frac{\mu(E)}{\mathrm{Vol}_L} \mathrm{Id}_E \right) \right) \frac{\omega^n}{n!}.
\end{equation*}
Now define
\begin{equation} \label{average}
	\bar{v} := \frac{1}{r \mathrm{Vol}_L} \int_X \tr (v) \frac{\omega^n}{n!} \mathrm{Id}_E ,\end{equation}
so that $v-\bar{v}$ has average 0. Since $\bar{v}$ is a constant multiple of the identity, we have
\begin{equation*}
	\left. \frac{d}{ds} \right|_{s=0} \mathcal{M}^{\mathrm{Don}} (\gamma (s) , h_0) = \int_X \tr \left( (v-\bar{v}) \left( \Lambda_{\omega} F(h_0) - \frac{\mu(E)}{\mathrm{Vol}_L} \mathrm{Id}_E \right) \right) \frac{\omega^n}{n!}.
\end{equation*}
Thus, by using Cauchy--Schwarz, we have
\begin{equation*}
	 \left. \frac{d}{ds} \right|_{s=0} \mathcal{M}^{\mathrm{Don}} (\gamma (s) , h_0)  \geq - \Vert v - \bar{v} \Vert_{L^2} \left\Vert \Lambda_{\omega} F(h_0) - \frac{\mu(E)}{\mathrm{Vol}_L} \mathrm{Id}_E \right\Vert_{L^2} 
\end{equation*}
where the norm $\Vert \cdot \Vert_{L^2}$ is defined as
\begin{equation*}
	\Vert v -\bar{v} \Vert^2_{L^2} := \int_X \tr \left( (v -\bar{v}) \cdot  (v -\bar{v})) \right) \frac{\omega^n}{n!}=\int_{X} \tr (v^2) - \tr (\bar{v}^2) \frac{\omega^n}{n!} \ge 0.
\end{equation*}
Defining a constant
\begin{equation*}
	\underline{C}(h_0) = \left\Vert \Lambda_{\omega} F(h_0) - \frac{\mu(E)}{\mathrm{Vol}_L} \mathrm{Id}_E \right\Vert_{L^2} \ge 0,
\end{equation*}
we get
\begin{equation}\label{CSineq}
	\mathcal{M}^{\mathrm{Don}} (h , h_0) \ge  - \underline{C}(h_0) \Vert v - \bar{v} \Vert_{L^2},
\end{equation}
by recalling \eqref{rineq}.

Let us consider $\{ \hat{h}_{\sigma_t} \}_{t \ge 0}$, a renormalised Bergman 1-PS associated to $\zeta \in \mathfrak{sl}(H^0(\mathcal{E}(k))^{\vee})$ emanating from $h_k$. We apply the above argument to $h = \hat{h}_{\sigma_t}$ and $h_0 = h_k$, in which we replace $X$ by $X^{\mathrm{reg}}$. We note that the convexity of the Donaldson functional (Proposition \ref{lemmdonconvH}) holds for the geodesic segment $\gamma_t (s) := e^{sv_t}$ connecting $h_k$ and $\hat{h}_{\sigma_t}$, with $v_t := \log \hat{h}_{\sigma_t} h^{-1}_k$ which is only well defined on $X^{\mathrm{reg}}$, since the only nontrivial part in proving the convexity is the integration by parts (see \cite[Proof of Proposition 8]{Don1985}) which we are allowed to do since $\hat{h}_{\sigma_t}$, $\hat{h}_{\sigma_t}^{-1}$, and $F(\hat{h}_{\sigma_t})$ are all bounded over $X^{\mathrm{reg}}$ for any fixed $t$ (we can also prove the convexity by using the regularisation, as we do later in the proof of Lemma \ref{lemC3}).


Thus (\ref{CSineq}) implies that we have
\begin{equation*}
	\mathcal{M}^{\mathrm{Don}} (\hat{h}_{\sigma_t},h_k) \ge   - \underline{C}(h_k) \Vert v_t - \bar{v}_t \Vert_{L^2} .
\end{equation*}
To be more precise, the curvature $F(\hat{h}_{\sigma_t})$ should be defined with respect to $\hat{h}_{\sigma_t}$ composed with the metric duality isomorphism (as in Remark \ref{remmdsign}), but in any case this does not affect the above argument.

We prove that $\Vert v_t - \bar{v}_t \Vert_{L^2}$ can be bounded by a constant depending only on $\zeta \in \mathfrak{sl}(H^0(\mathcal{E}(k))^{\vee})$ and $k$, irrespectively of $t$.

We choose an orthonormal frame with respect to which $\hat{h}_{\sigma_t} h^{-1}_k$ is diagonalised as
\begin{equation*}
	\hat{h}_{\sigma_t} h^{-1}_k = \mathrm{diag}(\lambda_1(x,t) , \dots , \lambda_r (x,t) )
\end{equation*}
at each point $x \in X^{\mathrm{reg}}$. By the definition of $\hat{h}_{\sigma_t}$, each $\lambda_{\alpha} (x,t)$ can be written as
\begin{equation*}
	\lambda_{\alpha} (x,t) = \lambda_{\alpha ,1} (x) + e^{-c_{\alpha} t} \lambda_{\alpha ,2}(x),
\end{equation*}
for some smooth functions $\lambda_{\alpha ,1}(x)$ and $\lambda_{\alpha ,2}(x)$, and some $c_{\alpha} >0$. Note that $\lambda_{1,1}(x) , \dots , \lambda_{r,1} (x)$ are the eigenvalues of the renormalised Quot-scheme limit $\hat{h}$ at $x \in X^{\mathrm{reg}}$. Thus $\lambda_{\alpha ,1} (x) >0$ for $x \in X^{\mathrm{reg}}$, but it may tend to zero as it approaches the singular locus $X \setminus X^{\mathrm{reg}}$ that is Zariski closed in $X$. Since $\hat{h}_{\sigma_t} = h_k$ at $t=0$, observe that
\begin{equation} \label{boundinit}
	\lambda_{\alpha ,1} (x) +  \lambda_{\alpha ,2}(x) =1
\end{equation}
at all $x \in X^{\mathrm{reg}}$. Moreover, since $\tr (\hat{h}_k) \le \tr (h_k)$ for all $t \ge 0$ (by Proposition \ref{propexpfsz} and Definition \ref{defrenbgops}), we have
\begin{equation} \label{ublambdaallt}
	\lambda_{\alpha ,1} (x) + e^{-c_{\alpha} t} \lambda_{\alpha ,2}(x) \le \tr (h_k)
\end{equation}
for all $t \ge 0$ and all $x \in X^{\mathrm{reg}}$.

We evaluate
\begin{equation*}
	\int_{X^{\mathrm{reg}}} \tr (v_t^2) \frac{\omega^n}{n!} = \sum_{\alpha=1}^r \int_{X^{\mathrm{reg}}} \left( \log (\lambda_{\alpha ,1} (x) + e^{-c_{\alpha} t} \lambda_{\alpha ,2}(x)) \right)^2 \frac{\omega^n}{n!}.
\end{equation*}
We fix $\alpha$ and consider the integral
\begin{equation*}
	\int_{X^{\mathrm{reg}}} \left( \log (\lambda_{\alpha ,1} (x) + e^{-c_{\alpha} t} \lambda_{\alpha ,2}(x)) \right)^2 \frac{\omega^n}{n!}.
\end{equation*}
The estimates (\ref{boundinit}) and (\ref{ublambdaallt}) imply that, if $\lambda_{\alpha ,2} (x) \le 0$, then $1 \le \lambda_{\alpha,1}(x) \le \tr (h_k)$, and hence
\begin{equation*}
	\left( \log (\lambda_{\alpha ,1} (x) + e^{-c_{\alpha} t} \lambda_{\alpha ,2}(x)) \right)^2 \le \left( \log \tr (h_k) \right)^2.
\end{equation*}
Thus, without loss of generality, we may restrict to a subset of $X^{\mathrm{reg}}$ on which we have $0< \lambda_{\alpha,2} (x) \le 1$. We may further assume that $\lambda_{\alpha,1}(x) \le 1/2$, since otherwise
\begin{equation*}
	\left( \log (\lambda_{\alpha ,1} (x) + e^{-c_{\alpha} t} \lambda_{\alpha ,2}(x)) \right)^2 \le \left( \log 2 \right)^2.
\end{equation*}

We thus evaluate the integral of $\left( \log (\lambda_{\alpha ,1} (x) + e^{-c_{\alpha} t} \lambda_{\alpha ,2}(x)) \right)^2$ over the subset of $X^{\mathrm{reg}}$ where $\lambda_{\alpha,1}(x) \le 1/2$ and $0< \lambda_{\alpha,2} (x) \le 1$. With this assumption we have
\begin{equation*}
	\left( \log (\lambda_{\alpha ,1} (x) + e^{-c_{\alpha} t} \lambda_{\alpha ,2}(x)) \right)^2 \le 2 \left( \log (\lambda_{\alpha ,1} (x)) \right)^2
\end{equation*}
for all $t \ge 0$.

We may assume that the singular locus is contained in a finite union of normal crossing divisors $D:=\cup_{j} D_j$, by \emph{e.g.}~pulling back to the resolution of $X \setminus X^{\mathrm{reg}}$ (see \cite[Theorem 4.4]{Sibley}). We first consider the case where we have a neighbourhood $U_x$ of $x$ such that $U_x$ intersects with $D$ only at the smooth locus of $D$. Then, writing $z$ for the local holomorphic coordinate which defines $D$ as $z=0$ in such a neighbourhood, we find that $\lambda_{\alpha ,1} (x)$ tends to zero along the singular locus at a polynomial order in $|z|$, by recalling Lemma \ref{cdiamq} and that the quotient map $\rho$ (defined in (\ref{fglgenquot})) is algebraic. Thus we get, for all $1 \leq \alpha \leq r$, 
\begin{equation*}
	\int_{U_x \setminus D} \left( \log (\lambda_{\alpha ,1} (x)) \right)^2 \frac{\omega^n}{n!} \le \text{const.} \int_0^c \mathrm{r} \left( \log \mathrm{r}^{m_\alpha} \right)^2 d\mathrm{r} < + \infty,
\end{equation*}
where $\mathrm{r}:= |z|$, $m_\alpha \in \mathbb{N}$, and $c$ is some constant. The case when $U_x$ contains a (normal crossing) singularity of $D$ can be treated by the same argument.

Thus, the conclusion is that there exists a constant $c_4 (\zeta , h_k) >0$ such that
\begin{equation} \label{rfsmbczetak}
	\int_{X^{\mathrm{reg}}} \tr (v_t^2) \frac{\omega^n}{n!} \le c_4 (\zeta , h_k )
\end{equation}
for all $t \ge 0$. Since 
\begin{equation*}
	\Vert v_t - \bar{v}_t \Vert^2_{L^2} =\int_{X^{\mathrm{reg}}} \tr (v_t^2) \frac{\omega^n}{n!} - \int_{X^{\mathrm{reg}}} \tr (\bar{v}^2_t) \frac{\omega^n}{n!} \ge 0
\end{equation*}
implies $\int_{X^{\mathrm{reg}}} \tr (v_t^2) \frac{\omega^n}{n!} \ge \int_{X^{\mathrm{reg}}} \tr (\bar{v}^2_t) \frac{\omega^n}{n!}$, we get
\begin{equation} \label{bdvtaverage}
	\Vert v_t - \bar{v}_t \Vert^2_{L^2} \le 2 \int_{X^{\mathrm{reg}}} \tr (v_t^2) \frac{\omega^n}{n!} \le 2 c_4 (\zeta , h_k ).
\end{equation}

Moreover, the above argument shows that the integrand $\tr (v_t^2) \frac{\omega^n}{n!}$ extends continuously to the whole of $X$. Thus, henceforth in the proof we may replace $X^{\mathrm{reg}}$ by $X$.

We now prove that the constant $c_4 (\zeta , h_k)$ can be bounded uniformly in $\zeta$. To simplify the notation, we write $v (\zeta t)$ for $v_t = \log \hat{h}_{\sigma_t} h^{-1}_k$, by noting that $\zeta$ always appears in the form $\zeta t$. Suppose by contradiction that there exists a sequence $\{ \zeta_j \}_{j \in \mathbb{N}} \subset \mathfrak{sl}(H^0 (\mathcal{E}(k))^{\vee})$, $|| \zeta_j ||_{\mathrm{op}} \le 1$ (\emph{cf.}~Remark \ref{condopnm}) such that
\begin{equation*}
	\sup_t \int_{X} \tr (v(\zeta_j t)^2 ) \frac{\omega^n}{n!} \to + \infty 
\end{equation*}
as $j \to + \infty$. We may assume that there exists a sequence $\{ t_j \}_{j \in \mathbb{N}} \subset \mathbb{R}_{\ge 0}$ such that
\begin{equation} \label{zetajblowup}
	 \int_{X} \tr (v(\zeta_j t_j)^2 ) \frac{\omega^n}{n!} \to + \infty 
\end{equation}
as $j \to + \infty$, and also assume that $\{ \zeta_j \}_j$ contains a subsequence which converges to $\zeta_{\infty} \in \mathfrak{sl}(H^0 (\mathcal{E}(k))^{\vee})$, by $|| \zeta_j ||_{\mathrm{op}} \le 1$ (\emph{cf.}~Remark \ref{condopnm}). By arguing as in the proof of (\ref{rfsmbczetak}), we find
\begin{equation} \label{zetainffin}
	\sup_t \int_{X} \tr (v(\zeta_{\infty} t)^2 ) \frac{\omega^n}{n!} \le c_4 (\zeta_{\infty} , h_k ) < + \infty. 
\end{equation}
If $\{t_j \}_j$ is bounded in $\mathbb{R}_{\ge 0}$, we see that (\ref{zetajblowup}) contradicts (\ref{zetainffin}) since
\begin{equation*} 
	\mathfrak{sl}(H^0(\mathcal{E}(k))^{\vee}) \ni \zeta \mapsto \int_{X} \tr (v(\zeta t)^2 ) \frac{\omega^n}{n!} \in \mathbb{R} 
\end{equation*}
is continuous for each fixed $t$. Thus, by taking a further subsequence in $j$, we may assume that $\{ t_j \}_j$ monotonically increases to $+ \infty$. Recalling Definition \ref{defrenbgops}, the Lebesgue integration theorem implies
\begin{equation} \label{limzetaj}
	\lim_{j \to \infty} \int_X \tr (v(\zeta_{j} t_j)^2 ) \frac{\omega^n}{n!} = \int_X \tr \left( (\log \hat{h} h_k^{-1})^2 \right) \frac{\omega^n}{n!} 
\end{equation}
where we wrote
\begin{equation*}
	\hat{h}:= \lim_{j \to + \infty} e^{-w_j t_j} h_{\sigma_{j, t_j}} e^{-w_j t_j},
\end{equation*}
$w_j$ being the eigenvalues of $\zeta_j$ (as in (\ref{defewt})) and $\sigma_{j , t_j} := \exp (\zeta _j t_j)$. However, the right hand side of (\ref{limzetaj}) is finite by the same argument as in the proof of (\ref{rfsmbczetak}) (the limit depends on the choice of the subsequence in $\{ \zeta_j \}_j$ and $\{ t_j \}_j$, but the point is that it is finite for any choice of subsequence). This is a contradiction, and hence there exists a constant $c_4 (h_k ) >0$ that depends only on $k$ and the reference metric $h_k$, and not on $\zeta$, such that
\begin{equation*}
	\sup_t \int_X \tr (v(\zeta t)^2 ) \frac{\omega^n}{n!} \le c_4 (\zeta , h_k ) \le c_4 (h_k ), 
\end{equation*}
as claimed.

Combining (\ref{CSineq}) and (\ref{bdvtaverage}), we establish all the claims stated in the proposition.
\end{proof}

We prove the following upper bound, which establishes Proposition \ref{sumprophat}.

\begin{lem}\label{lemC3}
Let $\{ \hat{h}_{\sigma_t} \}_{t \ge 0}$ be a renormalised Bergman 1-PS associated to $\zeta \in \mathfrak{sl}(H^0(\mathcal{E}(k))^{\vee})$. Then there exists $c_5 (\zeta, h_k)$ depending on $h_k$ and $\zeta$ $($and $k)$ such that
\begin{equation*}
		\mathcal{M}^{\mathrm{Don}}(\hat{h}_{\sigma_t} , h_k) < c_5 ( \zeta, h_k)
\end{equation*}	
for all $t \ge 0$.
\end{lem}

\begin{proof}
	We argue in three steps, as in the proof of Lemma \ref{lembdscfdmfm}: when each $\mathcal{E}_{< w_i}$ is locally free, saturated, and the general case.
	
	Step 1: when each $\mathcal{E}_{\le - w_i}$ is locally free, and hence $X^{\mathrm{reg}} = X$, $\hat{h}_{\sigma_t}$ converges exponentially to the product metric on the $C^{\infty}$-vector bundle $\bigoplus_{\alpha =\hat{1}}^{\hat{\nu}} E_{-w_{\alpha}}$ over $X$. The claimed estimate immediately follows from Lemmas \ref{pdxreghath} and \ref{lemqjalpqunif}.
	
	Step 2: when each $\mathcal{E}_{\le - w_i}$ is saturated, we again argue as in \cite[Sections 3 and 4]{Jacob}, as we did in Lemma \ref{lembdscfdmfm}. We pull back by a finite sequence of blowups, and remove the factors that vanish (or blow up) on the exceptional divisors of $\pi$ (as in the proof of Lemma \ref{lembdscfdmfm}) without altering the value of the Donaldson functional \cite[Proof of Propositions 1 and 5]{Jacob}. We thus reduce to the case in Step 1.
	
	Step 3: the general case, where $\mathcal{E}_{\le - w_i}$'s are not necessarily saturated, can be treated by a further sequence of blowups, as in the proof of Lemma \ref{lembdscfdmfm}. By removing the factors that vanish on the exceptional divisors of $\pi$, we reduce to the case in Step 2.
\end{proof}

\begin{rem} \label{remuniflb}
	Suppose that the reference metrics $\{ h_k \}_{k \in \mathbb{N}}$ converge (say in $C^p$, $p \ge 2$) to $h_{\mathrm{ref}} \in \mathcal{H}_{\infty}$ as $k \to + \infty$. It seems tempting to speculate that in fact we have a \textit{uniform} constant $c_{\mathrm{ref}} >0$ which depends only on $h_{\mathrm{ref}}$ such that
	\begin{equation*}
		\mathcal{M}^{\mathrm{Don}} (\hat{h}_{\sigma_t}, h_{\mathrm{ref}}) > - c_{\mathrm{ref}}
	\end{equation*}
	in place of Lemma \ref{proplb} and
	\begin{equation*}
		\sum_{\alpha=\hat{1}}^{\hat{\nu}} (w_{\alpha} - w_{\alpha -1}) \int_0^{+ \infty} dt \int_X |\mathrm{II}_{\alpha -1}' (\hat{h}_{\sigma_{t}}) |^2 \frac{\omega^{n-1}}{(n-1)!} > -c_{\mathrm{ref}},
	\end{equation*}
	in place of Lemma \ref{lembdscfdmfm}, for all $t \ge 0$ and $\zeta \in \mathfrak{sl}(H^0(\mathcal{E}(k))^{\vee})$. In the paper \cite{HK2}, we prove that the above uniform bounds lead to an alternative proof of the theorem of Donaldson \cite{Do-1983,Don1985,Don1987}.
\end{rem}

\section{The non-Archimedean Donaldson functional \label{RMstab}}

Recalling Definition \ref{defcochq}, we find that for any $\sigma \in \mathcal{X}_{\mathbb{Q}}$ there exists $m \in \mathbb{N}$ such that $\sigma^m$ is a 1-parameter subgroup in the category of algebraic groups, which in particular has integral weights. This allows us to clear up the denominators in the materials in Section \ref{Qlimit}, so that the filtration $0 \neq \mathcal{E}_{\le - w_{1}} \subset \cdots \subset \mathcal{E}_{\le - w_{\nu}} = \mathcal{E}$ in (\ref{pfiltevs}) is graded by integers; more precisely, writing $w_1, \dots , w_{\nu}$ for the weights of $\sigma$ and defining $m w_i =: \bar{w}_i \in \mathbb{Z}$, where $m \in \mathbb{N}$ is the least integer such that $\sigma^m$ is algebraic, we define the filtration
\begin{equation} \label{filtsbsint}
	0 \neq \mathcal{E}_{\le - \bar{w}_{1}} \subset \cdots \subset \mathcal{E}_{\le - \bar{w}_{\nu}} = \mathcal{E}
\end{equation}
in an obvious way.

Recall also that the 1-PS's $\mathcal{X}_{\mathbb{Q}}$ in Definition \ref{defcochq} correspond bijectively with Hermitian elements $\zeta \in \mathfrak{sl} (H^0(\mathcal{E}(k))^{\vee})$ with rational eigenvalues, which provides a more down-to-earth description of the above. In particular, the integer $m$ that appeared above is exactly the following.

\begin{definition} \label{defofjzetak}
	Let $w_1 , \dots , w_{\nu} \in \mathbb{Q}$ be the weights of $\zeta \in \mathfrak{sl} (H^0(\mathcal{E}(k))^{\vee})$ corresponding to an element in $\mathcal{X}_{\mathbb{Q}} (k)$. We define $j(\zeta , k) \in \mathbb{N}$ to be the smallest positive integer such that $$j(\zeta , k) w_i \in \mathbb{Z}$$ for all $i = 1 , \dots , \nu$.
\end{definition}

Given the integrally graded filtration (\ref{filtsbsint}), we also have the filtration $0 \neq \mathcal{E}'_{\le - \bar{w}_{1}} \subset \cdots \subset \mathcal{E}'_{\le - \bar{w}_{\nu}} = \mathcal{E}$ of $\mathcal{E}$ by saturated subsheaves as in Lemma \ref{lemcdquot}. With this convention understood, we prove the following result.
\begin{lem} \label{dgrkmna}
	Recalling $\mathcal{E}'_{-w_{i}} = \mathcal{E}'_{\le -w_{i}}/\mathcal{E}'_{\le - w_{i-1}}$ as in Lemma \ref{lemcdquot}, we have
	\begin{equation*}
		-\sum_{i =1}^{\nu}  w_{i} \left( \deg (\mathcal{E}'_{-w_{i}}) - \mu(\mathcal{E}) \mathrm{rk} (\mathcal{E}'_{-w_{i}}) \right)  = \frac{1}{j(\zeta , k)} \sum_{q \in \mathbb{Z}} \rk (\mathcal{E}'_{\le q}) (\mu (\mathcal{E}) -\mu(\mathcal{E}'_{\le q} )).
	\end{equation*}
\end{lem}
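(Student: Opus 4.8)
The plan is to prove the identity by a summation-by-parts (Abel summation) argument, after re-expressing both sides in terms of the filtration sheaves $\mathcal{E}'_{\le -w_i}$ rather than the graded quotients $\mathcal{E}'_{-w_i}$. First I would set $R_i := \rk(\mathcal{E}'_{\le -w_i})$ and $D_i := \deg(\mathcal{E}'_{\le -w_i})$, with the convention $R_0 = D_0 = 0$ and noting $R_\nu = \rk(\mathcal{E})$, $D_\nu = \deg(\mathcal{E})$. Because rank and degree are additive on the short exact sequences $0 \to \mathcal{E}'_{\le -w_{i-1}} \to \mathcal{E}'_{\le -w_i} \to \mathcal{E}'_{-w_i} \to 0$, we have $\deg(\mathcal{E}'_{-w_i}) = D_i - D_{i-1}$ and $\rk(\mathcal{E}'_{-w_i}) = R_i - R_{i-1}$. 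Writing $S_i := D_i - \mu(\mathcal{E})R_i$, the left-hand side of the asserted identity becomes $-\sum_{i=1}^\nu w_i (S_i - S_{i-1})$.

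Next I would carry out Abel summation. A direct rearrangement gives
\[
	-\sum_{i=1}^\nu w_i (S_i - S_{i-1}) = w_1 S_0 - w_\nu S_\nu + \sum_{i=1}^{\nu-1} (w_i - w_{i+1})(\mu(\mathcal{E})R_i - D_i).
\]
Here the two boundary terms vanish: $S_0 = 0$ by convention, and $S_\nu = \deg(\mathcal{E}) - \mu(\mathcal{E})\rk(\mathcal{E}) = 0$ is exactly the definition of the slope $\mu(\mathcal{E})$. Using that each $R_i$ is strictly positive (by Remark \ref{remcombfstfac}, $\mathcal{E}'_{\le -w_1}$ is a nonzero torsion-free subsheaf, hence $R_1 > 0$, and the ranks are nondecreasing along the filtration), I may rewrite $\mu(\mathcal{E})R_i - D_i = R_i(\mu(\mathcal{E}) - \mu(\mathcal{E}'_{\le -w_i}))$, so that the left-hand side equals $\sum_{i=1}^{\nu-1}(w_i - w_{i+1}) R_i (\mu(\mathcal{E}) - \mu(\mathcal{E}'_{\le -w_i}))$.

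Finally I would analyse the right-hand side by recognising the integer sum as a step-function sum. After clearing denominators the filtration jumps occur at the integers $-\bar w_i = -j(\zeta,k)w_i$, with $-\bar w_1 < \cdots < -\bar w_\nu$ by the ordering (\ref{ordlambda}); as a function of the integer parameter $q$, the saturated sheaf $\mathcal{E}'_{\le q}$ is constant and equal to $\mathcal{E}'_{\le -w_i}$ on the range $-\bar w_i \le q < -\bar w_{i+1}$. The terms with $q < -\bar w_1$ contribute nothing since then $\mathcal{E}'_{\le q} = 0$ has rank zero, and the terms with $q \ge -\bar w_\nu$ contribute nothing since then $\mathcal{E}'_{\le q} = \mathcal{E}$ makes $\mu(\mathcal{E}) - \mu(\mathcal{E}'_{\le q}) = 0$. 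On the intermediate range indexed by $i$ the number of integers is $\bar w_i - \bar w_{i+1} = j(\zeta,k)(w_i - w_{i+1})$, whence the right-hand side equals $\frac{1}{j(\zeta,k)}\sum_{i=1}^{\nu-1} j(\zeta,k)(w_i - w_{i+1}) R_i (\mu(\mathcal{E}) - \mu(\mathcal{E}'_{\le -w_i}))$, matching the left-hand side. The computation is elementary; the step requiring the most care is the bookkeeping on the right, namely correctly interpreting $\mathcal{E}'_{\le q}$ at the non-jump integers, checking that both extreme ranges drop out, counting the integer points between consecutive jumps, and tracking the signs produced by the decreasing ordering $w_1 > \cdots > w_\nu$.
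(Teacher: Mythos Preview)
Your proof is correct and follows essentially the same approach as the paper: both arguments rest on Abel summation (summation by parts) applied to the quantities $\deg(\mathcal{E}'_{\le -w_i}) - \mu(\mathcal{E})\rk(\mathcal{E}'_{\le -w_i})$, together with the vanishing of the boundary terms at $i=0$ and $i=\nu$. The only organisational difference is that the paper first passes to the integer index $q$ and performs the summation by parts there (briefly manipulating formally divergent sums that cancel), whereas you perform the Abel summation in the finite $i$-index and then evaluate the $q$-sum directly by counting integer points in each interval $[-\bar w_i, -\bar w_{i+1})$; your version is arguably a bit cleaner but mathematically identical.
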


\begin{proof}
	Note
	\begin{equation*}
		\rk (\mathcal{E}'_{-w_{i}}) = \rk \left( \mathcal{E}'_{\le -w_{i}}/\mathcal{E}'_{\le - w_{i-1}} \right) =\rk (\mathcal{E}'_{\le - w_{i}}) - \rk (\mathcal{E}'_{\le - w_{i -1}})
	\end{equation*}
	and
	\begin{equation*}
		\deg (\mathcal{E}'_{-w_{i}}) = \deg \left( \mathcal{E}'_{\le -w_{i}}/\mathcal{E}'_{\le - w_{i -1}} \right) =\deg (\mathcal{E}'_{\le - w_{i}}) - \deg (\mathcal{E}'_{\le - w_{i -1}}).
	\end{equation*}
	Thus, defining $\bar{w}_i := j(\zeta , k )w_i \in \mathbb{Z}$, we have
	\begin{align*}
		&- \sum_{i =1}^{\nu}  w_{i} \left( \deg (\mathcal{E}'_{-w_{i}}) - \mu(\mathcal{E}) \mathrm{rk} (\mathcal{E}'_{-w_{i}}) \right)  \\
		&=\frac{-1}{j(\zeta , k)} \sum_{i =1}^{\nu}  \bar{w}_{i} \left( \deg (\mathcal{E}'_{\le - \bar{w}_{i }}) - \deg (\mathcal{E}'_{\le - \bar{w}_{i-1}}) - \mu(\mathcal{E}) (\rk (\mathcal{E}'_{\le - \bar{w}_{i }}) - \rk (\mathcal{E}'_{\le - \bar{w}_{i -1}} ) ) \right)  \\
		&=\frac{1}{j(\zeta , k)} \sum_{q \in \mathbb{Z}}  q \left( \deg (\mathcal{E}'_{\le q }) - \deg (\mathcal{E}'_{\le  q-1}) - \mu(\mathcal{E}) (\rk (\mathcal{E}'_{\le q } ) - \rk (\mathcal{E}'_{\le q-1} ) ) \right),
	\end{align*}
	by recalling the ordering $w_1 > \cdots > w_{\nu}$ as in (\ref{ordlambda}).
	
	Since $ q  \deg (\mathcal{E}'_{\le q-1 }) = (q-1) \deg (\mathcal{E}'_{\le q-1 }) + \deg (\mathcal{E}'_{\le q-1 })$, we get
	\begin{equation*}
		\sum_{q \in \mathbb{Z}}  q \left( \deg (\mathcal{E}'_{\le q }) - \deg (\mathcal{E}'_{\le q-1}) \right) = -\sum_{q \in \mathbb{Z}} \deg (\mathcal{E}'_{\le q-1 }) = -\sum_{q \in \mathbb{Z}} \deg (\mathcal{E}'_{\le q }),
	\end{equation*}
	and similarly for the rank
	\begin{equation*}
		\sum_{q \in \mathbb{Z}}  q \left( \rk (\mathcal{E}'_{\le q }) - \rk (\mathcal{E}'_{\le q-1}) \right) ) = -\sum_{q \in \mathbb{Z}} \rk (\mathcal{E}'_{\le q }).
	\end{equation*}
	Although the above infinite sums are divergent, we substitute them in
	\begin{align*}
		\sum_{q \in \mathbb{Z}}  q \left( \deg (\mathcal{E}'_{\le q }) - \deg (\mathcal{E}'_{\le q-1}) - \mu(\mathcal{E}) (\rk (\mathcal{E}'_{\le q } ) - \rk (\mathcal{E}'_{\le q-1} ) ) \right)
		&= - \sum_{q \in \mathbb{Z}} \left( \deg (\mathcal{E}'_{\le q }) - \mu (\mathcal{E}) \rk (\mathcal{E}'_{\le q}) \right)\\
		&=\sum_{q \in \mathbb{Z}} \rk (\mathcal{E}'_{\le q}) (\mu (\mathcal{E}) -\mu(\mathcal{E}'_{\le q} )),
	\end{align*}
	to get a finite sum; since the filtration $0 \neq \mathcal{E}'_{\le- w_{1}} \subset \cdots \subset \mathcal{E}'_{\le- w_{\nu}} = \mathcal{E}$ is finite, there are only finitely many nonzero terms in the above sum.
\end{proof}

We now define the non-Archimedean Donaldson functional, following the terminology of \cite[Definition~3.4]{BHJ2}.
\begin{definition} \label{defmdonna}
	Suppose that we take $\sigma \in \mathcal{X}_{\mathbb{Q}}$ which gives rise to the filtration $\{ \mathcal{E}_{\le q} \}_{q \in \mathbb{Z}}$ of $\mathcal{E}$ as in (\ref{filtevs}), which we may assume is graded by integers (as in (\ref{filtsbsint})). Taking the saturation $\mathcal{E}'_{\le q}$ of $\mathcal{E}_{\le q}$ in $\mathcal{E}$, the \textbf{non-Archimedean Donaldson functional} $\mathcal{M}^{\mathrm{NA}}  : \mathcal{X}_{\mathbb{Q}} \to \mathbb{Q}$ is defined as
	\begin{equation*}
		\mathcal{M}^{\mathrm{NA}} (\sigma ) := \frac{2}{j (\sigma )} \sum_{q \in \mathbb{Z}} \mathrm{rk} (\mathcal{E}'_{\le q}) \left(  \mu(\mathcal{E}) - \mu (\mathcal{E}'_{\le q}) \right),
	\end{equation*}
	where $j (\sigma ) \in \mathbb{N}$ is the smallest positive integer such that $\sigma^{j(\sigma)}$ is a 1-PS in the category of algebraic groups.
	
	While the above formulation is conceptually straightforward, a formula in terms of the generator $\zeta \in \mathfrak{sl} (H^0(\mathcal{E}(k))^{\vee})$ of $\sigma \in \mathcal{X}_{\mathbb{Q}}$ is also useful in a sequel \cite{HK2} to this paper. By using the notation in Definition \ref{defofjzetak}, we write
	\begin{equation*}
		\mathcal{M}^{\mathrm{NA}} (\zeta , k) = \frac{2}{j(\zeta , k)} \sum_{q \in \mathbb{Z}} \mathrm{rk} (\mathcal{E}'_{\le q}) \left(  \mu(\mathcal{E}) - \mu (\mathcal{E}'_{\le q}) \right),
	\end{equation*}
	for the generator $\zeta$ of each element in $\mathcal{X}_{\mathbb{Q}}(k)$.
\end{definition}

The name ``non-Archimedean'' originally comes from the analogy with a theory on Berkovich spaces, as explained in \cite[Section 6.8]{BHJ1}. Recall also that a filtration on a vector space $V$ corresponds one-to-one with a non-Archimedean norm on $V$ \cite[Section 1.1]{BHJ1}; see also \cite[Section 1]{Boucksom-icm}.

\section{Two-step filtrations associated to saturated subsheaves}\label{2stp}

We now consider the special case when the filtration is given in two steps. A particularly important case is the one associated to saturated subsheaves of $\mathcal{E}$, which defines a natural two-step filtration for $H^0 (\mathcal{E}(k)) =: V$.

\begin{definition}\label{twostepfiltdef}
	
	Suppose that we are given a saturated subsheaf $\mathcal{F}$ of $\mathcal{E}$, and choose $k \in \mathbb{N}$ sufficiently large such that $\mathcal{F}(k)$ and $\mathcal{E}(k)$ are both globally generated. Then it defines $0 \neq H^0(\mathcal{F}(k)) \subset H^0(\mathcal{E}(k)) = V$ a two-step filtration (whose weights will be specified later in (\ref{defwtfone}) and (\ref{defwtftwo})). We write $\zeta_{\mathcal{F}} \in \mathfrak{sl}(H^0(\mathcal{E}(k))^{\vee})$ for the element defining this filtration. 
\end{definition}

Note that the filtration of $\mathcal{E}$ generated by $\zeta_{\mathcal{F}}$ as in (\ref{filtevs}) is exactly $0 \neq \mathcal{F} \subset \mathcal{E}$.

We now specify the weights of $\zeta_{\mathcal{F}}$ defined in Definition \ref{twostepfiltdef}. Recall that the weights $w_1 , w_2$ of $\zeta_{\mathcal{F}}$ are determined by $\tr (\zeta_{\mathcal{F}}) = 0$; more precisely (\emph{cf.}~\cite[Section 4]{GF-K-R}), we get
	\begin{align}
	w_1 &= \frac{h^0 ( \mathcal{F}(k))}{h^0 (\mathcal{E}(k))}\ \ \text{and} \label{defwtfone} \\
	w_2 &=  -\frac{h^0 (\mathcal{E}(k)) - h^0 ( \mathcal{F}(k))}{h^0(\mathcal{E} (k))}, \label{defwtftwo}
	\end{align}
	where the factor $1/ h^0(\mathcal{E}(k))$ is inserted so as to make the operator norm of $\zeta$ to be less than 1 (\emph{cf.} Remark~\ref{condopnm}). Note that these weights are rational numbers, and hence that the 1-PS generated by $\zeta_{\mathcal{F}}$ defines an element in $\mathcal{X}_{\mathbb{Q}}$.

We can restate some of the results in Section \ref{unifestimsect} for the two-step filtrations as follows.

\begin{prop} \label{prop2}
Suppose that $\zeta_{\mathcal{F}} \in\mathfrak{sl} (H^0 (\mathcal{E}(k))^{\vee})$ defines a two-step filtration associated to a saturated subsheaf $\mathcal{F}\subset \mathcal{E}$ for large enough $k$ as in Definition \ref{twostepfiltdef}. For the rational Bergman 1-PS $\{ h_{\sigma_t} \}_{t \ge 0}$ defined by $\zeta_{\mathcal{F}}$ emanating from $h_k$, there exist a constant $c_6 (h_k ,  k) >0$ such that
\begin{equation*}
	\mathcal{M}^{\mathrm{Don}} (h_{\sigma_t} , h_{k}) \ge \rk (\mathcal{F}) (\mu (\mathcal{E}) - \mu( \mathcal{F})) \cdot 2 t - c_6 (h_k , k)
\end{equation*}
for all $t \ge 0$, and a constant $c_7 (h_k , \zeta_{\mathcal{F}} , k) >0$ such that
\begin{equation*}
	\mathcal{M}^{\mathrm{Don}} (h_{\sigma_t} , h_{k}) \le \rk (\mathcal{F}) (\mu (\mathcal{E}) - \mu( \mathcal{F})) \cdot 2 t + c_7 (h_k , \zeta_{\mathcal{F}} , k)
\end{equation*}
holds for all sufficiently large $t > 0$.
\end{prop}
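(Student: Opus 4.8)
The plan is to obtain Proposition~\ref{prop2} as a direct specialisation of the general lower and upper bounds of Propositions~\ref{uniflbdon} and~\ref{unifubdon} to the two-step filtration determined by $\zeta_{\mathcal{F}}$, the only real work being an explicit evaluation of the leading coefficient of $t$. First I would record that here $\nu = 2$ and that the filtration of $\mathcal{E}$ induced by $\zeta_{\mathcal{F}}$ is $0 \neq \mathcal{E}_{\le -w_1} \subset \mathcal{E}_{\le -w_2} = \mathcal{E}$ with $\mathcal{E}_{\le -w_1} = \mathcal{F}$, the subsheaf attached to the larger weight $w_1$, as noted after Definition~\ref{twostepfiltdef}. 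Since $\mathcal{F}$ is already saturated in $\mathcal{E}$, the saturation procedure of Lemma~\ref{lemcdquot} leaves the filtration unchanged, so $\mathcal{E}'_{\le -w_1} = \mathcal{F}$ and $\mathcal{E}'_{\le -w_2} = \mathcal{E}$, and the graded pieces are $\mathcal{E}'_{-w_1} = \mathcal{F}$ and $\mathcal{E}'_{-w_2} = \mathcal{E}/\mathcal{F}$.

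Next I would compute the common leading coefficient $-\sum_{i=1}^{2} 2 w_i (\deg(\mathcal{E}'_{-w_i}) - \mu(\mathcal{E})\rk(\mathcal{E}'_{-w_i}))$ appearing in both propositions. Applying additivity of rank and degree along the short exact sequence $0 \to \mathcal{F} \to \mathcal{E} \to \mathcal{E}/\mathcal{F} \to 0$ (valid since $\mathcal{E}/\mathcal{F}$ is torsion-free) together with $\deg(\mathcal{E}) - \mu(\mathcal{E})\rk(\mathcal{E}) = 0$, the $w_2$-term reduces to $-w_2(\deg(\mathcal{F}) - \mu(\mathcal{E})\rk(\mathcal{F}))$, so the two terms combine into $-2(w_1 - w_2)(\deg(\mathcal{F}) - \mu(\mathcal{E})\rk(\mathcal{F}))$. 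By (\ref{defwtfone}) and (\ref{defwtftwo}) one has $w_1 - w_2 = 1$, and since $\deg(\mathcal{F}) = \mu(\mathcal{F})\rk(\mathcal{F})$ this equals $2\rk(\mathcal{F})(\mu(\mathcal{E}) - \mu(\mathcal{F}))$. The same value also drops out of Lemma~\ref{dgrkmna}, whose right-hand sum collapses to the single jump at the $q$ with $\mathcal{E}'_{\le q} = \mathcal{F}$. Substituting this coefficient into Proposition~\ref{uniflbdon} gives the first inequality with constant $c(h_k,k)$, and into Proposition~\ref{unifubdon} gives the second with constant $c(\zeta_{\mathcal{F}},h_k,k)$.

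The argument is essentially bookkeeping, so I do not expect a genuine obstacle; all the analytic content is already packaged in Propositions~\ref{uniflbdon} and~\ref{unifubdon}. The points requiring the most care are the sign and weight conventions: I would double-check that the larger weight $w_1$ is indeed attached to $\mathcal{F}$ under the metric-dual identification of Remark~\ref{remmdsign} and the ordering (\ref{ordlambda}), and that $k$-regularity of $\mathcal{F}$ is precisely what guarantees $\rho(H^0(\mathcal{F}(k)) \otimes \mathcal{O}_X(-k)) = \mathcal{F}$, so that the filtration generated by $\zeta_{\mathcal{F}}$ is genuinely $0 \subset \mathcal{F} \subset \mathcal{E}$ rather than some smaller subsheaf.
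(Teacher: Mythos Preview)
Your proposal is correct and follows exactly the approach of the paper, whose proof literally says ``write down Propositions~\ref{uniflbdon} and~\ref{unifubdon} for the specific filtration defined by $\zeta_{\mathcal{F}}$; direct computation immediately yields the claimed results.'' One small slip: your stated intermediate value for the $w_2$-term should be $+2w_2(\deg\mathcal{F}-\mu(\mathcal{E})\rk\mathcal{F})$ rather than $-w_2(\ldots)$, but you nonetheless arrive at the correct combined coefficient $-2(w_1-w_2)(\deg\mathcal{F}-\mu(\mathcal{E})\rk\mathcal{F}) = 2\rk(\mathcal{F})(\mu(\mathcal{E})-\mu(\mathcal{F}))$.
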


\begin{proof}
	We simply write down Propositions \ref{sumprop} for the specific filtration defined by $\zeta_{\mathcal{F}}$. Direct computation immediately yields the claimed results.
\end{proof}

Finally, we end this section by proving the following rather technical result which we need later. Recall that the distance $\mathrm{dist} (h_1, h_2)$ between two Hermitian metrics $h_1 , h_2 \in \mathcal{H}$ can be defined by using the geodesic between them, as in Section \ref{fsmetvb}.

\begin{lem}\label{lemdistinfty}
	Given a Bergman 1-PS $\{ h_{\sigma_t} \}_{t \ge 0}$ emanating from $h_k$, generated by $\zeta_{\mathcal{F}} \in \mathfrak{sl}(H^0(E(k)))$ associated to a saturated subsheaf $\mathcal{F} \subset \mathcal{E}$, we have $\mathrm{dist}(h_{\sigma_t} , h_{k}) \to + \infty$ as $t \to + \infty$.
\end{lem}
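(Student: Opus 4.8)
The plan is to turn the geodesic distance into an explicit $L^2$-quantity and then show that this quantity blows up by a one-sided pointwise estimate combined with Fatou's lemma. First I would make the distance explicit. Recalling (Section \ref{fsmetvb}) that the geodesic from $h_k$ to $h_{\sigma_t}$ is $s \mapsto \exp(s v_t) h_k$ with $v_t := \log(h_{\sigma_t} h_k^{-1})$, one checks that along this path $h_s^{-1}\partial_s h_s = h_k^{-1} v_t h_k$, so that $\tr\big((h_s^{-1}\partial_s h_s)^2\big) = \tr(v_t^2)$ is independent of $s$. Hence the length of this geodesic, and therefore the distance, is given by
\[
	\mathrm{dist}(h_{\sigma_t}, h_k)^2 = \int_X \tr(v_t^2)\,\frac{\omega^n}{n!},
\]
and it suffices to prove that the right-hand side tends to $+\infty$. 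Writing $\Lambda_1(x,t) \ge \cdots \ge \Lambda_r(x,t) > 0$ for the generalised eigenvalues of $h_{\sigma_t}$ relative to $h_k$ at $x \in X^{reg}$, the endomorphism $v_t$ has eigenvalues $\log \Lambda_\alpha(x,t)$, so that $\tr(v_t^2) = \sum_\alpha (\log \Lambda_\alpha(x,t))^2 \ge (\log \Lambda_1(x,t))^2$.

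Second, I would produce a pointwise lower bound for $\Lambda_1$ on $X^{reg}$. Here the two-step structure is decisive: by (\ref{defwtfone}) the top weight $w_1 = h^0(\mathcal{F}(k))/h^0(\mathcal{E}(k))$ is strictly positive, and by Remark \ref{remcombfstfac} the first graded piece $E_{-w_{\hat 1}} = E_{-w_1}$ is exactly the subbundle associated to $\mathcal{F}$ over $X^{reg}$. Proposition \ref{propexpfsz} writes $h_{\sigma_t} = e^{wt}\hat h_{\sigma_t} e^{wt}$, and its $(1,1)$-block satisfies $(\hat h_{\sigma_t})_{11} \ge Q^*_{\hat 1 \to \hat 1} Q_{\hat 1 \to \hat 1} = \hat h_1$, since the $(1,1)$-block of $\bar I_{\sigma_t}$ is $\sum_{j>1} e^{-2(w_1-w_j)t} Q^*_{j \to 1} Q_{1 \to j}$, a sum of fibrewise nonnegative forms. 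Taking a test vector $u$ supported in $E_{-w_1}$ therefore gives $h_{\sigma_t}(u,u) = e^{2 w_1 t}(\hat h_{\sigma_t})_{11}(u,u) \ge e^{2 w_1 t}\, \hat h_1(u,u)$, and by the Rayleigh-quotient characterisation of the top generalised eigenvalue,
\[
	\Lambda_1(x,t) \ge e^{2 w_1 t} \, c_1(x), \qquad c_1(x) := \sup_{0 \ne u \in E_{-w_1,x}} \frac{\hat h_1(u,u)}{h_k(u,u)} > 0,
\]
where $c_1(x) > 0$ on $X^{reg}$ because $\hat h_1$ is positive definite there (Lemma \ref{pdxreghath}). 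Consequently $\log \Lambda_1(x,t) \ge 2 w_1 t + \log c_1(x) \to +\infty$ as $t \to +\infty$ (using $w_1 > 0$ and the ordering (\ref{ordlambda})), so that $\tr(v_t^2)(x) \ge (\log\Lambda_1(x,t))^2 \to +\infty$ at every $x \in X^{reg}$.

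Finally I would conclude by Fatou's lemma: since $\tr(v_t^2) \ge (\log\Lambda_1(\cdot,t))^2 \ge 0$ and $X^{reg}$ has full measure,
\[
	\liminf_{t \to +\infty} \mathrm{dist}(h_{\sigma_t}, h_k)^2 \ge \int_{X^{reg}} \liminf_{t \to +\infty} (\log\Lambda_1(x,t))^2 \,\frac{\omega^n}{n!} = +\infty .
\]
The main obstacle I anticipate is precisely that $\hat h$, and hence the comparison constant $c_1(x)$, may degenerate along the singular locus $X \setminus X^{reg}$, so that no bound uniform in $x$ is available; this is exactly why I route the argument through a \emph{one-sided} lower bound with $w_1 > 0$ rather than trying to control $\tr(v_t^2)$ from both sides. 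Because the integrand is nonnegative and its pointwise liminf is $+\infty$ on a set of full measure, the degeneration on the lower-dimensional set $X \setminus X^{reg}$ is harmless, and Fatou delivers the conclusion $\mathrm{dist}(h_{\sigma_t}, h_k) \to +\infty$.
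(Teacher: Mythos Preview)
Your proof is correct and follows essentially the same route as the paper's: use the block decomposition of Proposition \ref{propexpfsz} to force $\tr(v_t^2)$ to blow up pointwise on $X^{reg}$, then pass to the integral. Your version is in fact a little cleaner in two respects --- you only use a one-sided lower bound on the top eigenvalue $\Lambda_1$ (via the nonnegativity of the $(1,1)$-block of $\bar I_{\sigma_t}$) rather than the paper's two-sided estimate $2wt + \log(\hat h/2) < v_t < 2wt + \log(2\hat h)$ on all eigenvalues, and you invoke Fatou explicitly to absorb the possible degeneration of $\hat h$ along $X\setminus X^{reg}$, a step the paper leaves implicit --- but the underlying argument is the same.
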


\begin{proof}
	We can explicitly write down the geodesic $\{ \gamma_s ( t) \}_{0 \le s \le 1}$ connecting $h_{k}$ and $h_{\sigma_t}$ as $e^{sv(\sigma_t)}h_{k}$ where $v ( \sigma_t ):= \log (h_{\sigma_t} h^{-1}_{k})$. We compute its Riemannian length as
	\begin{equation*}
		|\partial_s \gamma_s ( t)| = \int_X \tr (v ( \sigma_t ) \cdot v ( \sigma_t ))^{1/2} \frac{\omega^n}{n!} ,
	\end{equation*}
	where we recall that $\gamma_s (t)$ being a geodesic is equivalent to $\partial_s \gamma_s ( t)$ being independent of $s$. We show that this length segment tends to $+ \infty$ as $t \to + \infty$.
	
	We take an $h_{k}$-orthonormal frame. We now recall Proposition \ref{propexpfsz}, which states
	\begin{equation*}
	v(\sigma_t) = \log e^{w t} \left( \hat{h} + o(t) \right) e^{w t},	
	\end{equation*}
	where $w := \mathrm{diag} (w_1 , w_2)$ (as a block-diagonal matrix), where $w_1 , w_2$ are as defined in (\ref{defwtfone}) and (\ref{defwtftwo}). We also wrote $\hat{h} := \bigoplus_{\alpha =1}^{2} Q^*_{\alpha \to \alpha} Q_{\alpha \to \alpha} >0$ for the renormalised Quot-scheme limit.
	Observe that we have
	\begin{equation*}
		\log e^{w t} \left( \hat{h}/2 \right) e^{w t} < v( \sigma_t) < \log e^{w t} \left( 2 \hat{h}  \right) e^{w t} ,
	\end{equation*}
	for all large enough $t$, and hence
	\begin{equation*}
		2 w t + \log \left( \hat{h} /2 \right) < v( \sigma_t) < 2 w t + \log \left( 2 \hat{h} \right) ,
	\end{equation*}
	noting that $e^{w t}$ commutes with the renormalised Quot-scheme limit $\hat{h}$. Thus each eigenvalue of $v(\sigma_t)$ can be estimated as $2 w_{\alpha} t (1+ o(t))$, $\alpha = 1, 2$.
	Hence we get
	\begin{equation*}
		\tr ( v(\sigma_t) v(\sigma_t) ) > 4 t^2 \sum_{\alpha =1}^{2} w_{\alpha}^2 (1+o(t)) > 2 t^2 \sum_{\alpha =1}^{2} w_{\alpha}^2,
	\end{equation*}
	which tends to $+ \infty$ as $t \to + \infty$ at each point $x \in X^{\mathrm{reg}}$.
	
	Thus we can compute the geodesic length between $h_{k}$ and $h_{\sigma_t}$, as we did in the proof of Lemma \ref{proplb}, as
	\begin{equation*}
		\mathrm{dist}(h_{\sigma_t} , h_{k}) = \int_0^1 ds \int_X \tr (v ( \sigma_t ) \cdot v ( \sigma_t ))^{1/2}  \frac{\omega^n}{n!} > \text{const}\cdot t^2 \sum_{\alpha =1}^{2} w_{\alpha}^2,
	\end{equation*}
	which tends to $+ \infty$ as $t \to + \infty$.
\end{proof}

\section{Summary of the main results}\label{mainsect}

We recall the following terminology (\emph{cf.}~\cite[Definition 5.1]{BHJ2}).

\begin{definition}\label{lognorm}
 Let $\mathcal{N}$ be a function $$\mathcal{N} : \SL (N , \mathbb{C})  \to \mathbb{R}$$ that is continuous with respect to the topology on $\SL (N , \mathbb{C})$ defined by a submultiplicative matrix norm $|| \cdot ||$ (\emph{e.g.}~the operator norm).
 \begin{enumerate}
 	\item $\mathcal{N}$ is said to be \textbf{coercive} if there exist two constants $a,b \in \mathbb{R}$ with $a>0$ such that
  $$\mathcal{N}( g ) \geq a \log \Vert g \Vert - b$$ holds for any $g \in \SL (N , \mathbb{C})$;
  \item $\mathcal{N}$ is said to have \textbf{log norm singularities} if it can be written as $$\mathcal{N}(g ) = a \log \Vert g \Vert +O(1)$$ for some constant $a \in \mathbb{R}$, where $O(1)$ stands for the terms that remain bounded over $\SL (N , \mathbb{C})$.
 \end{enumerate}
 
We shall also say that $\mathcal{N}$ is coercive (resp.~has log norm singularities) \textit{along any rational Bergman 1-PS} if in the above notation it satisfies $\mathcal{N}(\sigma_t ) \geq a \log \Vert \sigma_t \Vert - b$ (resp.~$\mathcal{N}(\sigma_t ) = a \log \Vert \sigma_t \Vert +O(1)$) for any $ \sigma \in \mathcal{X}_{\mathbb{Q}}$.
\end{definition}

Observe that by means of Bergman 1-PS's, we can naturally define $\mathcal{M}^{\mathrm{Don}}$ as a map from $\SL(H^0(\mathcal{E}(k))^{\vee})$ to $\mathbb{R}$ as
\begin{displaymath}
\mathcal{M}^{\mathrm{Don}} (-,h_k):
\left\{
  \begin{array}{rcl}
    \SL(H^0(\mathcal{E}(k))^{\vee}) & \longrightarrow &\mathbb{R} \\
    \sigma & \longmapsto & \mathcal{M}^{\mathrm{Don}} (h_{\sigma},h_k) \\
  \end{array}
\right.
\end{displaymath}
with the fixed reference metric $h_k$, while we are mostly interested in the case when the domain of $\mathcal{M}^{\mathrm{Don}} (-,h_k)$ is restricted to a Bergman 1-PS $\{ \sigma_t \}_{t \ge 0}$.

Important functionals concerning the constant scalar curvature K\"ahler metrics have log norm singularities, as proved by Paul \cite{Paul}; see also \cite[Section 5]{BHJ2}. Note also that the property of having log norm singularities is independent of the choice of a matrix norm, up to changing the constants $c_0$ and $c_1$.

A particularly important role is played by coercive functions; they are in particular bounded from below. If $\mathcal{N}$ is coercive then it is proper, \emph{i.e.}~the preimage of any compact set is compact, which is equivalent in this setting to saying that $\mathcal{N}(g )$ tends to $+\infty$ as $\Vert g \Vert$ tends to $+\infty$.

We now summarise what we have established so far. Suppose that we take $k_0 \in \mathbb{N}$ to be large enough for $\mathcal{E} (k)$ to be globally generated, and also that we have a sequence $\{ h_k \}_{k \ge k_0} \subset \mathcal{H}_{\infty}$, $h_k \in \mathcal{H}_k$, converging to the reference metric $h_{\mathrm{ref}} \in \mathcal{H}_{\infty}$ in $C^p$ for $p \ge 2$, afforded by (\ref{defbergsp2}). Write $\{ h_{\sigma_t} \}_{t \ge 0}$ for the Bergman 1-PS emanating from $h_k$ associated to the 1-PS $\sigma \in \mathcal{X}_{\mathbb{Q}}$. In what follows we assume $k \ge k_0$.

\begin{thm} \label{thmlnsdf}
There exists a constant $c_k (h_{\mathrm{ref}}) >0$ that depends only on $h_{\mathrm{ref}}$ and $k \in \mathbb{N}$ such that
\begin{equation*}
		\mathcal{M}^{\mathrm{Don}}(h_{\sigma_t} ,  h_{\mathrm{ref}}) \ge  \mathcal{M}^{\mathrm{NA}} ( \sigma ) t - c_{k} (h_{\mathrm{ref}})
\end{equation*}
holds for all $t \ge 0$ and all $ \sigma \in \mathcal{X}_{\mathbb{Q}}(k)$, where
\begin{equation*}
		\mathcal{M}^{\mathrm{NA}} ( \sigma ) = \frac{2}{j(\sigma )} \sum_{q \in \mathbb{Z}} \mathrm{rk} (\mathcal{E}'_{\le q}) \left(  \mu(\mathcal{E}) - \mu (\mathcal{E}'_{\le q}) \right)
\end{equation*}
is as defined in Definition \ref{defmdonna}.
\end{thm}

\begin{proof}
This follows from what we have established so far. By the cocycle property (\ref{cocyclemdon}) of the Donaldson functional, we have
\begin{equation*}
	\mathcal{M}^{\mathrm{Don}}(h_{\sigma_t} ,  h_{\mathrm{ref}}) = \mathcal{M}^{\mathrm{Don}}(h_{\sigma_t} ,  h_k) + \mathcal{M}^{\mathrm{Don}}(h_k ,  h_{\mathrm{ref}}).
\end{equation*}
By the convergence $h_k \to h_{\mathrm{ref}}$ in $C^p$, we may assume $|\mathcal{M}^{\mathrm{Don}}(h_k ,  h_{\mathrm{ref}})| < 1$ for all $k$. The result now follows from Proposition \ref{sumprop} and Lemma \ref{dgrkmna}, by replacing $c_1 (h_k) + \mathcal{M}^{\mathrm{Don}}(h_k ,  h_{\mathrm{ref}})$ by some constant $c_k (h_{\mathrm{ref}}) >0$.
\end{proof}

A similar argument also yields the following result.

\begin{thm}\label{uppbound}
There exists a constant $c_k (\zeta, h_{\mathrm{ref}}) >0$ depending on $h_{\mathrm{ref}}$, $k \in \mathbb{N}$, and $\zeta \in \mathfrak{sl}(H^0(\mathcal{E}(k))^{\vee})$, such that
	\begin{equation*}
		\mathcal{M}^{\mathrm{Don}}(h_{\sigma_t} ,  h_{\mathrm{ref}}) \le \mathcal{M}^{\mathrm{NA}} ( \sigma ) t + c_k (\zeta, h_{\mathrm{ref}})
\end{equation*}
holds for any $\sigma \in \mathcal{X}_{\mathbb{Q}}(k)$ and all large enough $t>0$, with $\mathcal{M}^{\mathrm{NA}} ( \sigma )$ as defined in Definition \ref{defmdonna}.
\end{thm}

Note the following important observation for $\mathcal{M}^{\mathrm{NA}}$ (where a filtration is called trivial if it is $0 \subsetneq \mathcal{E}$), which follows immediately by considering the two-step filtrations (and 1-PS's associated to them) as in Definition \ref{twostepfiltdef}.


\begin{prop} \label{cormnastab}
The non-Archimedean Donaldson functional $\mathcal{M}^{\mathrm{NA}} (\sigma )$ is positive $($resp.~nonnegative$\,)$ for all $\sigma \in \mathcal{X}_{\mathbb{Q}}(k)$ whose associated filtration by saturated subsheaves (\ref{filtevs}) is nontrivial and for all $k \in \mathbb{N}$ such that $\mathcal{E} (k)$ is globally generated, if and only if $\mathcal{E}$ is slope stable $($resp.~semistable$\,)$.
\end{prop}

We finally summarise particularly important consequences of our main results as follows.

\begin{cor}\label{prop1}
With notation as in Theorems \ref{thmlnsdf} and \ref{uppbound} and Definition \ref{lognorm}, we have the following results.
\begin{enumerate}

\item  If $\mathcal{E}$ is slope stable $($resp.~semistable$\,)$, then the Donaldson functional is coercive $($resp.~bounded from below$\,)$ along any rational Bergman 1-PS in $\mathcal{H}_k \subset \mathcal{H}_{\infty}$, for all $k \in \mathbb{N}$ such that $\mathcal{E} (k)$ is globally generated.
\item The Donaldson functional has log norm singularities along any rational Bergman 1-PS in $\mathcal{H}_k \subset \mathcal{H}_{\infty}$ and for all $k \in \mathbb{N}$ such that $\mathcal{E} (k)$ is globally generated, i.e.~for any $ \sigma  \in \mathcal{X}_{\mathbb{Q}} (k)$ we have
	\begin{equation*}
		\mathcal{M}^{\mathrm{Don}} (h_{\sigma_t}, h_{\mathrm{ref}}) = \mathcal{M}^{\mathrm{NA}} ( \sigma ) t + O(1),
	\end{equation*}
	 where $O(1)$ stands for the terms that remain bounded as $t \to + \infty$. In particular, the Donaldson functional is not bounded from below if $\mathcal{E}$ is slope unstable.
	
	\item We have $$\lim_{t \to + \infty} \frac{\mathcal{M}^{\mathrm{Don}} (h_{\sigma_t}, h_{\mathrm{ref}})}{t} = \mathcal{M}^{\mathrm{NA}} ( \sigma )$$ for any $\sigma \in \mathcal{X}_{\mathbb{Q}}(k)$ and for all $k \in \mathbb{N}$ such that $\mathcal{E} (k)$ is globally generated.
\end{enumerate}
\end{cor}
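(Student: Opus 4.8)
The plan is to deduce all three parts from the two-sided estimate provided by Theorems \ref{thmlnsdf} and \ref{uppbound}, which together pin the growth of $\mathcal{M}^{Don}(h_{\sigma_t},h_{\mathrm{ref}})$ along a Bergman 1-PS to be linear in $t$ with leading coefficient $\mathcal{M}^{\mathrm{NA}}(\zeta,k)$, combined with the sign information on $\mathcal{M}^{\mathrm{NA}}$ coming from Proposition \ref{cormnastab}. The one preliminary I would record is the elementary relation between the parameter $t$ and the matrix norm: since $\sigma_t=e^{\zeta t}$ with $\zeta$ Hermitian of trace zero, the positive operator $\sigma_t$ has eigenvalues $e^{w_i t}$, so $\log\Vert\sigma_t\Vert_{op}=w_1 t$ with $w_1>0$ the top weight (and $\log\Vert\sigma_t\Vert=w_1 t+O(1)$ for any submultiplicative norm, which suffices since the log norm singularity property is norm-independent up to constants). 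This lets me freely convert the linear-in-$t$ bounds into linear-in-$\log\Vert\sigma_t\Vert$ bounds.

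For part (3), I would write the chain
\[
	\mathcal{M}^{\mathrm{NA}}(\zeta,k)\,t-c_k\le \mathcal{M}^{Don}(h_{\sigma_t},h_{\mathrm{ref}})\le \mathcal{M}^{\mathrm{NA}}(\zeta,k)\,t+c(\zeta,k,h_{\mathrm{ref}}),
\]
valid for all large $t$, divide through by $t$, and let $t\to+\infty$; both outer terms converge to $\mathcal{M}^{\mathrm{NA}}(\zeta,k)$, giving the limit. Part (2) follows from the same chain, which shows $\mathcal{M}^{Don}(h_{\sigma_t},h_{\mathrm{ref}})-\mathcal{M}^{\mathrm{NA}}(\zeta,k)\,t$ stays bounded — exactly the stated $O(1)$ expansion (and, after $t=\log\Vert\sigma_t\Vert/w_1$, the log norm singularity with $c_0=\mathcal{M}^{\mathrm{NA}}(\zeta,k)/w_1$ in the sense of the general definition). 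For the last assertion of (2), if $\mathcal{E}$ is slope unstable it is in particular not semistable, so Proposition \ref{cormnastab} yields a $\zeta$ with nontrivial associated filtration and $\mathcal{M}^{\mathrm{NA}}(\zeta,k)<0$; feeding this into the upper bound of Theorem \ref{uppbound} forces $\mathcal{M}^{Don}(h_{\sigma_t},h_{\mathrm{ref}})\to-\infty$, so the functional is unbounded below.

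For part (1), I would invoke Proposition \ref{cormnastab}: slope stability (resp.\ semistability) forces $\mathcal{M}^{\mathrm{NA}}(\zeta,k)>0$ (resp.\ $\ge 0$) for every $\zeta$ with nontrivial associated filtration. In the stable case the lower bound of Theorem \ref{thmlnsdf} becomes $\mathcal{M}^{Don}(h_{\sigma_t},h_{\mathrm{ref}})\ge (\mathcal{M}^{\mathrm{NA}}(\zeta,k)/w_1)\log\Vert\sigma_t\Vert_{op}-c_k$, which is coercivity with $c_0=\mathcal{M}^{\mathrm{NA}}(\zeta,k)/w_1>0$; in the semistable case $\mathcal{M}^{\mathrm{NA}}(\zeta,k)\ge 0$ gives boundedness below by $-c_k$. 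The point I would treat carefully is the degenerate directions where the induced filtration of $\mathcal{E}$ by subsheaves is trivial: there $\mathcal{M}^{\mathrm{NA}}(\zeta,k)=0$, and I would note that by Proposition \ref{propexpfsz} and Definition \ref{defrenbgops} such a Bergman 1-PS is a constant overall rescaling $e^{2w_1 t}\hat{h}_{\sigma_t}$ of a metric converging on all of $X$, so the scale-invariance of $\mathcal{M}^{Don}$ keeps it bounded — consistent with coercivity being meaningful only in nontrivial directions. The genuinely hard content, however, is not in this corollary at all: it sits upstream in the two-sided bounds, and upgrading the per-$\zeta$ constant $c_0=\mathcal{M}^{\mathrm{NA}}(\zeta,k)/w_1$ to one \emph{uniform} over all 1-PS would require controlling $\inf_\zeta \mathcal{M}^{\mathrm{NA}}(\zeta,k)/w_1$ through boundedness of the relevant saturated subsheaves — precisely the uniform refinement deferred to the sequel.
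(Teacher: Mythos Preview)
Your proposal is correct and follows the same approach the paper implicitly takes: the corollary is stated without proof, as an immediate consequence of Theorems \ref{thmlnsdf} and \ref{uppbound} together with Proposition \ref{cormnastab}, and your derivation of parts (1)--(3) from the two-sided linear bound plus the sign of $\mathcal{M}^{\mathrm{NA}}$ is exactly what is intended.

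You actually supply more than the paper does. The explicit conversion $\log\Vert\sigma_t\Vert_{op}=w_1 t$, the careful treatment of the degenerate case where $\zeta$ induces the trivial filtration on $\mathcal{E}$ (so $\mathcal{M}^{\mathrm{NA}}(\zeta,k)=0$ and coercivity with $c_0>0$ cannot hold along that particular 1-PS, though boundedness does), and the observation that the constant $c_0=\mathcal{M}^{\mathrm{NA}}(\zeta,k)/w_1$ is per-$\zeta$ rather than uniform --- these are all points the paper leaves tacit. Your remark that the genuinely hard uniform version is deferred to the sequel is also accurate (cf.\ Remark \ref{remuniflb}). One minor comment: your justification in the degenerate case via scale invariance and convergence of $\hat{h}_{\sigma_t}$ ``on all of $X$'' is slightly optimistic, since $X^{reg}$ need not equal $X$ even then; the cleaner route is to note that in this case $w$ is a scalar multiple of the identity, so the first term in Lemma \ref{lm1} vanishes identically and $\mathcal{M}^{Don}(h_{\sigma_t},h_k)=\mathcal{M}^{Don}(\hat{h}_{\sigma_t},h_k)$, which is bounded by Proposition \ref{proplb} and the lemma following Proposition \ref{unifubdon}.
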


\section{From Hermitian--Einstein metrics to slope stability}\label{cantostab}

In this section we provide a more geometric proof of a theorem of Kobayashi \cite{Kobookjp} and L\"ubke \cite{Luebke}, as a simple application of Theorem \ref{uppbound} and the geodesic convexity of the Donaldson functional. We start by considering an irreducible holomorphic vector bundle $\mathcal{E}$ endowed with a Hermitian--Einstein metric.

Suppose that there exists a critical point for the Donaldson functional, which is necessarily the global minimum, say $h_{\mathrm{min}}$, over $\mathcal{H}_{\infty}$ by strict geodesic convexity (\emph{cf.}~Proposition \ref{lemmdonconvH}). Now we take $h_{\mathrm{min}}$ as the reference metric $h_{\mathrm{ref}}$, and consider the Donaldson functional $$\mathcal{M}^{\mathrm{Don}} (h):= \mathcal{M}^{\mathrm{Don}}(h, h_{\mathrm{min}})$$ for $h \in \mathcal{H}_k \subset \mathcal{H}_{\infty}$.

Using the results of Section \ref{2stp}, consider $\zeta_{\mathcal{F}} \in \mathfrak{sl} (H^0 (\mathcal{E}(k))^{\vee})$ associated to a saturated subsheaf $\mathcal{F}$ of $\mathcal{E}$, where we take $k \in \mathbb{N}$ to be large enough so that $\mathcal{E} (k)$ and $\mathcal{F} (k)$ are both globally generated, and write $\sigma_{\mathcal{F}} \in \mathcal{X}_{\mathbb{Q}} (k)$ for the 1-PS generated by $\zeta_{\mathcal{F}}$. If $\mathcal{M}^{\mathrm{NA}} (\sigma_{\mathcal{F}}) < 0$, then from Theorem \ref{uppbound}, we have $$\mathcal{M}^{\mathrm{Don}}(h_{\sigma_t} ) \to - \infty$$ as $t \to + \infty$, for the path $\{ h_{\sigma_t} \}_{t \ge 0}$ in $\mathcal{H}_k$. In particular, for large enough $t$, we have $\mathcal{M}^{\mathrm{Don}}(h_{\sigma_t}) < \mathcal{M}^{\mathrm{Don}}(h_{\mathrm{min}})$, contradicting the definition of the minimum $h_{\mathrm{min}}$. Thus we get $\mathcal{M}^{\mathrm{NA}} (\sigma_{\mathcal{F}}) \ge 0$ for any saturated subsheaf $\mathcal{F}$.

We now prove $\mathcal{M}^{\mathrm{NA}} (\sigma_{\mathcal{F}}) > 0$. Assume for contradiction $\mathcal{M}^{\mathrm{NA}} (\sigma_{\mathcal{F}})=0$. Using the geodesic completeness of $\mathcal{H}_{\infty}$, we consider the (nontrivial) geodesic between  $h_{\sigma_t}$ and $h_{\mathrm{min}}$ (it may be not contained in $\mathcal{H}_k$, but this does not matter), and the geodesic distance $\mathrm{dist}(h_{\sigma_t} , h_{\mathrm{min}})$ between them. Then, from  Lemma \ref{lemdistinfty}, we have $\mathrm{dist}(h_{\sigma_t} , h_{\mathrm{min}}) \to + \infty$ as $t \to +\infty$; recall that the sequence $\{ h_k \}_{k \in \mathbb{N}} \subset \mathcal{H}_{\infty}$, $h_k \in \mathcal{H}_k$, converges to the reference metric $h_{\mathrm{min}}$ in $C^p$ for $p \ge 2$, as in Section \ref{mainsect}. Thus, using the strict geodesic convexity of $\mathcal{M}^{\mathrm{Don}}$ we obtain 
$$\mathcal{M}^{\mathrm{Don}}(h_{\sigma_t}) \to + \infty,$$ as $t \to + \infty$. 
But Theorem \ref{uppbound} implies that this contradicts the assumption $\mathcal{M}^{\mathrm{NA}}(\sigma_{\mathcal{F}})=0$. Consequently, we get $\mathcal{M}^{\mathrm{NA}} (\sigma_{\mathcal{F}}) > 0$, which implies (by recalling Proposition \ref{prop2} or \ref{cormnastab})
$$\mu(\mathcal{E})>\mu(\mathcal{F}),$$
and hence the slope stability of $\mathcal{E}$ by Lemma \ref{refl}.

\medskip

Finally, the case when $\mathcal{E}$ is reducible can be treated similarly. By applying the previous argument to each irreducible component of $\mathcal{E}$, we find that $\mathcal{E}$ is a direct sum of slope stable bundles. The Hermitian--Einstein equation implies that the slopes of these components are equal, and hence $\mathcal{E}$ is slope polystable.


\ifx\undefined\bysame
\newcommand{\bysame}{\leavevmode\hbox to3em{\hrulefill}\,}
\fi

\end{document}